\newtheorem{theorem}{Theorem}[section]
\newtheorem{corollary}[theorem]{Corollary}
\newtheorem{lemma}[theorem]{Lemma}
\theoremstyle{definition}
\newtheorem{example}[theorem]{Example}
\newtheorem{remark}[theorem]{Remark}
\newcommand{\rpm}{{\rho_{+\leftrightarrow -}}}
\begin{document}

\subjclass[2020]{Primary 57K12; Secondary 57K18}
%\date{\today}
\keywords{Quandle, Symmetric quandle, Oriented link}

\title[]{Quandles versus symmetric quandles for oriented links}
\author[]{Kanako Oshiro}
\address{Department of Information and Communication Sciences, 
Sophia University, 7-1 Kioicho, Chiyoda-ku, Tokyo 102-8554, Japan}
\email{oshirok@sophia.ac.jp}

\maketitle

\begin{abstract}
Given a quandle, we can construct a symmetric quandle called the symmetric double of the quandle. 
We show that the (co)homology groups of a given quandle are isomorphic to those of its symmetric double.
Moreover, quandle coloring numbers and quandle cocycle invariants of oriented links and oriented surface-links can be interpreted by using symmetric quandles. 
\end{abstract}

%%%%%%%%%%%%%%%%%%%%%
%%%%%%%%%%%%%%%%%%%%%
\section{Introduction}
A quandle is an algebraic system independently introduced by D. Joyce \cite{Joyce82} and S. Matveev \cite{Matveev82} in 1982. There are several studies using quandles in knot theory. 
Especially quandle cocycle invariants \cite{CarterJelsovskyKamadaLangfordSaito03} (see also \cite{CarterKamadaSaitobook}), introduced by J. S. Carter et al. in 2003, are very useful for studies of oriented links and oriented surface-links, refer to \cite{ Iwakiri10, KawamuraOshiroTanaka, Nosaka, OshiroSatoh, OshiroTanaka, Satoh07, Satoh16, SatohShima04, Tanaka07} for example. 
Here, we note that in order to define quandle cocycle invariants, it is essential that links or surface-links are oriented.  
In 2007, S. Kamada \cite{Kamada2006} (see also \cite{KamadaOshiro10}) introduced symmetric quandle cocycle invariants for links and surface-links.
In order to define symmetric quandle cocycle invariants, it is not required that links or surface-links are oriented or orientable, and therefore, this enabled us to consider quandle cocycle invariants for links or surface-links which are not necessarily oriented or orientable. 
There are several studies using symmetric quandle colorings and symmetric quandle cocycle invariants, see \cite{CarterOshiroSaito10, JangOshiro,  Kamada2014, KamadaOshiro10, Oshiro10, Oshiro11}  for example.

In this paper, we focus on a symmetric quandle which is called the symmetric double of a given quandle. 
We show that the (co)homology groups of a given quandle are isomorphic to those of the symmetric double of the quandle (Section~\ref{sec:4}).
We also show that for a given quandle, the quandle coloring numbers and the quandle cocycle invariants of oriented (surface-)links can be interpreted by using the symmetric double of the quandle (Sections~\ref{sec:5} and \ref{sec:6}).

This paper is organized as follows.
In Section~\ref{sec:2}, we  review the definitions of quandles, quandle (co)homology groups, and link invariants using quandles. 
In Section~\ref{sec:3}, we  review the definitions of symmetric quandles, symmetric quandle (co)homology groups, and link invariants using symmetric quandles.
In Section~\ref{sec:4}, we define the symmetric double of a given quandle, and we  prove that the (co)homology groups of a quandle are isomorphic to those of its symmetric double.  
In Section~\ref{sec:5}, we discuss oriented link invariants using quandles. Especially, we show that 
quandle coloring numbers, quandle homology invariants and quandle cocycle invariants for oriented links can be interpreted by using symmetric quandles.
In Section~\ref{sec:6}, we summarize the same properties as shown in Section~\ref{sec:5} for oriented surface-links.

\section{Quandles and invariants for oriented links}\label{sec:2}
In this section, we  review the definitions of quandles, quandle (co)homology groups, link invariants using quandles. 
\subsection{Quandles}
A \textit{quandle} \cite{Joyce82, Matveev82}, see also \cite{FennRourke92},  is a set $X$ with a binary operation $*: X \times X \to X$ satisfying the following axioms.
\begin{itemize}
\item[(Q1)] For any $x \in X$, $x * x =x$.
\item[(Q2)] For any $x, y \in X$, there exists a unique element $z\in X$ such that $z * y=x$.
\item[(Q3)] For any $x,y,z \in X$, $(x * y) * z = (x*  z) * (y * z)$.
\end{itemize}
The unique element $z$ in the second axiom is denoted by $x *y^{-1}$. 
For a quandle $X$, when we specify the quandle operation $*$, we also denote  by $(X, *)$ the quandle $X$. 

The proof of the next lemma is straightforward, and we leave it to the reader.
\begin{lemma}\label{lem:1}Let $(X,*)$ be a quandle.
For any $x,y,z\in X$ and $\varepsilon, \delta \in \{\pm 1\}$, 
it holds that 
\[
(x*y^{\varepsilon })*z^{\delta} = (x*z^{\delta})*(y*z^\delta)^\varepsilon,
\]
where in this paper $x*y^{+1}$ means $x*y$.
\end{lemma}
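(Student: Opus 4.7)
The plan is to reduce everything to axiom (Q3), which is the case $(\varepsilon,\delta)=(+1,+1)$, by a case split on the four sign combinations in $\{\pm 1\}^2$. The identity is already an equation of elements, so I just need to move $*^{-1}$ operations across (Q3) by inserting auxiliary elements supplied by (Q2).

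First I would dispose of $(+1,+1)$, which is literally (Q3). Next, for $(-1,+1)$, I would set $w := x * y^{-1}$, so that by the definition of $*^{-1}$ we have $w * y = x$. Applying (Q3) to $w,y,z$ gives
\[
(w * y) * z = (w * z) * (y * z),
\]
i.e.\ $x * z = (w * z) * (y * z)$. Solving for $w * z$ via the uniqueness in (Q2) yields $w * z = (x * z) * (y * z)^{-1}$, which is exactly the desired identity. The case $(+1,-1)$ is symmetric: set $a := x * z^{-1}$ and $b := y * z^{-1}$, so $a * z = x$ and $b * z = y$; applying (Q3) to $a,b,z$ gives $(a*b)*z = x*y$, and hence $a*b = (x*y)*z^{-1}$, which rearranges to the claim.

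For the last case $(-1,-1)$, I would chain the two previous cases. Starting from the $(-1,+1)$ identity with $z$ replaced by an auxiliary element $z'$ satisfying $z' * z = z$ (namely $z' = z$ by (Q1), or more generally using (Q2)), one can alternatively set $u := x*y^{-1}$ and apply the $(+1,-1)$ case to $u,y,z$: this gives $(u*y)*z^{-1} = (u*z^{-1})*(y*z^{-1})$. Since $u*y = x$, one gets $x*z^{-1} = (u*z^{-1})*(y*z^{-1})$, and then (Q2) yields $u*z^{-1} = (x*z^{-1}) * (y*z^{-1})^{-1}$, which is the $(-1,-1)$ instance.

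I do not expect any real obstacle here; the only thing to be careful about is consistently using (Q2) to justify the ``division'' steps, since $*^{-1}$ is defined by a uniqueness clause rather than as an a priori binary operation. Once that bookkeeping is clear, each case is a two-line rewrite from (Q3).
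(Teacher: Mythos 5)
Your proof is correct, and it is exactly the routine verification the paper has in mind: the paper states that the proof of Lemma~\ref{lem:1} is straightforward and leaves it to the reader, and your four-case reduction to (Q3) via (Q2) fills that in without any gaps. (The brief digression about an auxiliary $z'$ in the $(-1,-1)$ case is unnecessary, but the argument you actually carry out there is complete.)
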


\begin{example}\label{ex:quandle1}
A {\it trivial quandle}  is a non-empty set $S$ with the trivial operation, that is, $x*y=x$ for any $x,y\in S$.
A trivial quandle consisting of $n$ elements is denoted by $T_n$.
\end{example}
\begin{example}\label{ex:quandle2}
The {\it dihedral quandle} of order $n$ for an integer $n\geq 3$  is $\mathbb Z/ n\mathbb Z$ with the operation $x*y=2y-x$ for any $x,y \in \mathbb Z/ n\mathbb Z$. We denote it by $R_n$. 
\end{example}
\begin{example}\label{ex:quandle3}
A {\it conjugation quandle} is a group $G$ with the operation $g*h= h^{-1} g h$ for any $g,h \in G$. 
We denote it by ${\rm Conj}\,G$.
\end{example}

\subsection{Quandle (co)homology theories}\label{subsec:quandlehomology}
Carter et al. \cite{CarterJelsovskyKamadaLangfordSaito03} defined homology groups of quandles, which are deeply related to homology groups of racks due to Fenn et al. \cite{FennRourkeSanderson95}, and various versions are introduced so far, see \cite{CarterElhamdadiGranaSaito05, CarterKamadaSaitobook}. 
In this paper, we adopt the definition of the homology group of a quandle $X$ with an $X$-set $Y$. 
We note that when $Y$ is a singleton, the homology group concides with that defined in \cite{CarterJelsovskyKamadaLangfordSaito03}, and when $Y=X$, the homology group coincides with that defined in \cite{CarterKamadaSaitobook}.

The {\it associated group} of a quandle $(X,*)$ is 
\[
G_X=\langle x\in X ~|~x*y= y^{-1} x y ~(x,y\in X) \rangle,
\]
where we note that any element of $X$ can be regarded as an element of $G_X$ through the map $X \to G_X; x \mapsto x$.
An {\it $X$-set} is a set $Y$ equipped with a right action by the associated group $G_X$. 
In this paper, we use the same symbol for the operation of a given quandle $X$ and the action on a given  $X$-set $Y$ by $G_X$, that is, we denote by $r*g$ the image of an element $r\in Y$ acted on by $g \in G_X$.

Let $(X,*)$ be a quandle and $Y$ an $X$-set.
The same proof as in Lemma~\ref{lem:1} works for the next lemma, and we leave it to the reader.
\begin{lemma}\label{lem:2}
For any $r\in Y$, $x,y\in X$ and $\varepsilon, \delta \in \{\pm 1\}$, 
it holds that 
\[
(r*x^{\varepsilon })*y^{\delta} = (r*y^{\delta})*(x*y^\delta)^\varepsilon.
\]
\end{lemma}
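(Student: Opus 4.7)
The plan is to leverage the right action of the associated group $G_X$ on the $X$-set $Y$, which turns both sides of the identity into expressions of the form $r$ acted on by a single element of $G_X$; once the dust settles on these group elements, equality is immediate.

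First I would recall that by the defining relation of $G_X$, in the group we have $x*y = y^{-1}xy$. Hence for any $\varepsilon,\delta\in\{\pm 1\}$, using the action axiom $(r*g)*h = r*(gh)$ for $g,h\in G_X$, I would rewrite
\[
(r*x^{\varepsilon})*y^{\delta} = r*(x^{\varepsilon}y^{\delta}),
\qquad
(r*y^{\delta})*(x*y^{\delta})^{\varepsilon} = r*\bigl(y^{\delta}(y^{-\delta}xy^{\delta})^{\varepsilon}\bigr).
\]
So the lemma reduces to the group-theoretic identity $x^{\varepsilon}y^{\delta} = y^{\delta}(y^{-\delta}xy^{\delta})^{\varepsilon}$ in $G_X$, which I would verify by splitting into the two cases on $\varepsilon$: for $\varepsilon=+1$ the right-hand side collapses to $y^{\delta}y^{-\delta}xy^{\delta}=xy^{\delta}$, and for $\varepsilon=-1$ it becomes $y^{\delta}y^{-\delta}x^{-1}y^{\delta}=x^{-1}y^{\delta}$. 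Both agree with $x^{\varepsilon}y^{\delta}$ on the nose.

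Alternatively, one can mimic the proof of Lemma~\ref{lem:1} directly at the level of $Y$: establish the base case $\varepsilon=\delta=+1$, namely $(r*x)*y = (r*y)*(x*y)$, from the computation $(r*y)*(x*y)=r*(y\cdot y^{-1}xy)=r*(xy)=(r*x)*y$, and then bootstrap the remaining three sign combinations by applying the base case to suitably chosen preimages under the invertible actions of $y$ and $z$. The only subtlety, and the sole place one must be careful, is confirming that $r*y^{-1}$ in the $X$-set notation coincides with the action of the group-theoretic inverse $y^{-1}\in G_X$; since $X\to G_X$ sends $x$ to $x$ and the action respects group inversion, this matches the quandle convention $x*y^{-1}$ used in Lemma~\ref{lem:1}, so the argument transfers verbatim.
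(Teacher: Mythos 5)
Your proposal is correct. The paper offers no written proof here---it only remarks that the proof of Lemma~\ref{lem:1} transfers and leaves it to the reader---so there is nothing to match line by line, but your primary argument differs in spirit from what that remark suggests. Where Lemma~\ref{lem:1} would be proved by a four-way case analysis on the signs using (Q2), (Q3) and the uniqueness of the inverse operation, you collapse both sides to $r$ acted on by a single element of $G_X$ via the action identity $(r*g)*h = r*(gh)$ and verify the group identity $x^{\varepsilon}y^{\delta} = y^{\delta}(y^{-\delta}xy^{\delta})^{\varepsilon}$, which disposes of all four sign combinations at once. This is arguably the natural proof for this lemma, since the only structure $Y$ carries is the $G_X$-action; even the ``mimic Lemma~\ref{lem:1}'' route you sketch as an alternative needs the action identity to establish its base case, as $Y$ satisfies no axiom (Q3) of its own. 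The one step you assert without justification is that the image of the quandle element $x*y^{\delta}$ in $G_X$ is $y^{-\delta}xy^{\delta}$ for $\delta=-1$ as well as for $\delta=+1$: for $\delta=-1$ this is not literally a defining relation of $G_X$, but it follows in one line from (Q2), since $z=x*y^{-1}$ satisfies $z*y=x$, hence $y^{-1}zy=x$ in $G_X$ and so $z=yxy^{-1}$. With that noted, the argument is complete.
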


Let $C_n(X)_Y$ be the free abelian group generated by $(n+1)$-tuples $(r,x_1, \ldots , x_n)\in Y \times X^n$ when $n$ is a positive integer, and let $C_n(X)_Y=\{0\}$ otherwise. The boundary homomorphism $\partial_n : C_n (X)_Y \to C_{n-1} (X)_Y$ is defined by 
\[
\begin{array}{rl}
\partial_n (r,x_1, \ldots , x_n) =& \displaystyle \sum_{i=1}^n (-1)^{i}\big\{ (r, x_1,  \ldots , \hat{x_i}, \ldots, x_n)\\
&\ \ \ \ - (r*x_i, x_1*x_i,\ldots , x_{i-1}*x_i, \hat{x_i}, x_{i+1}, \ldots , x_n)\big\}
\end{array}
\]
for $n>1$ and $\partial_n=0$ otherwise, where $\hat{x_i}$ represents that $x_i$ is removed. Then $C_* (X)_Y = \{ C_n (X)_Y , \partial_n\}_{n\in \mathbb Z}$ is a chain complex.

Let $D_n(X)_Y$ be the subgroup of $C_n(X)_Y$ generated by the elements of 
\[
\{ (r, x_1, \ldots , x_n) \in Y\times X^n ~|~ x_i = x_{i+1} \mbox{ for some $i\in \{1,\ldots , n-1\}$}\}.
\]
Then $D_*(X)_Y = \{ D_n(X)_Y, \partial _n\}_{n\in \mathbb Z}$ is a subchain complex of $C_* (X)_Y$.
Let $C_n^{\rm Q} (X)_Y= C_n (X)_Y/ D_n (X)_Y$, and we denote by $\partial_n^{\rm Q}$ the induced boundary homomorphism $\partial_n^{\rm Q}: C_n^{\rm Q} (X)_Y \to C_{n-1}^{\rm Q} (X)_Y$. 
The quotient chain complex $C_*^{\rm Q} (X)_Y=\{ C_n^{\rm Q} (X)_Y, \partial_n^{\rm Q} \}_{n\in \mathbb Z}$ leads to the {\it quandle homology group} of $X$ and $Y$ defined by $H_n^{\rm Q} (X)_Y= {\rm Ker} \partial_n^{\rm Q}/ {\rm Im} \partial_{n+1}^{\rm Q}$.

For an abelian group $A$, we define the chain complex $C_*^{\rm Q} (X;A)_Y =\{ C_n^{\rm Q} (X)_Y\otimes A, \partial_n^{\rm Q} \otimes {\rm id} \}_{n\in \mathbb Z}$. 
The homology groups are denoted by $H_*^{\rm Q} (X;A)_Y$.
Note that when $A=\mathbb Z$, $H_*^{\rm Q} (X; \mathbb Z)_Y$ represents $H_*^{\rm Q} (X)_Y$.
We also define the cochain group $C_{\rm Q}^n (X; A)_Y$ and the coboundary homomorphism $\delta_{\rm Q}^n : C_{\rm Q}^n (X; A)_Y \to C_{\rm Q}^{n+1} (X; A)_Y$  
by $C_{\rm Q}^n (X; A)_Y = {\rm Hom}(C_n^{\rm Q}(X)_Y, A)$ and $\delta_{\rm Q}^{n} (\theta)=  \theta \circ \partial ^{\rm Q}_{n+1}$, respectively. 
The {\it quandle cohomology group with coefficients in $A$} is defined  
by $H_{\rm Q}^n (X; A)_Y={\rm Ker} \delta_{\rm Q}^n/{\rm Im} \delta_{\rm Q}^{n-1}$.
We denote by $Z_{\rm Q}^n (X; A)_Y$ and $B_{\rm Q}^n (X; A)_Y$ the cocycle group ${\rm Ker} \delta_{\rm Q}^n$ and the coboundary group ${\rm Im} \delta_{\rm Q}^{n-1}$, respectively.

\subsection{Quandle colorings} 
A {\it link} is a disjoint union of circles embedded in $\mathbb R^3$.
Two links are said to be {\it equivalent} if there exists an orientation-preserving self-homeomorphism of $\mathbb R^3$ which maps one link onto the other.
A {\it diagram} of a link is its image, via a generic projection from $\mathbb R^3$ to $\mathbb R^2$, equipped with the height information around the crossings. 
The height information of a diagram of a link is given by removing regular neighborhoods of the lower crossing points, and thus, the diagram is regarded as a disjoint union of connected compact parts. We call each connected component an {\it arc} of the diagram. 
The $2$-dimensional space $\mathbb R^2$ is separated into several connected regions by a diagram. We call each connected region a {\it complementary region} of the diagram.

Let $(X,*)$ be a quandle.
Let $(D, o)$ be an oriented link diagram, that is, a pair of  a link diagram $D$ and an orientation $o$ of $D$. 
In this paper, we often use normal orientations of arcs for representing the orientation $o$ of $D$ as in Figure~\ref{normalori}, where for an arc $\alpha$ of $D$, the pair of the orientation $o$  and the normal orientation $n_\alpha$ of the arc always forms the right-handed orientation of $\mathbb R^2$.  
\begin{figure}[t]
 \begin{center}
\includegraphics[width=15mm]{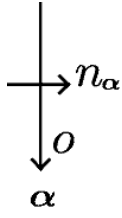}
\end{center}
\caption{The orientation and the normal orientation}
\label{normalori}
\end{figure}

\begin{figure}[t]
 \begin{center}
\includegraphics[width=35mm]{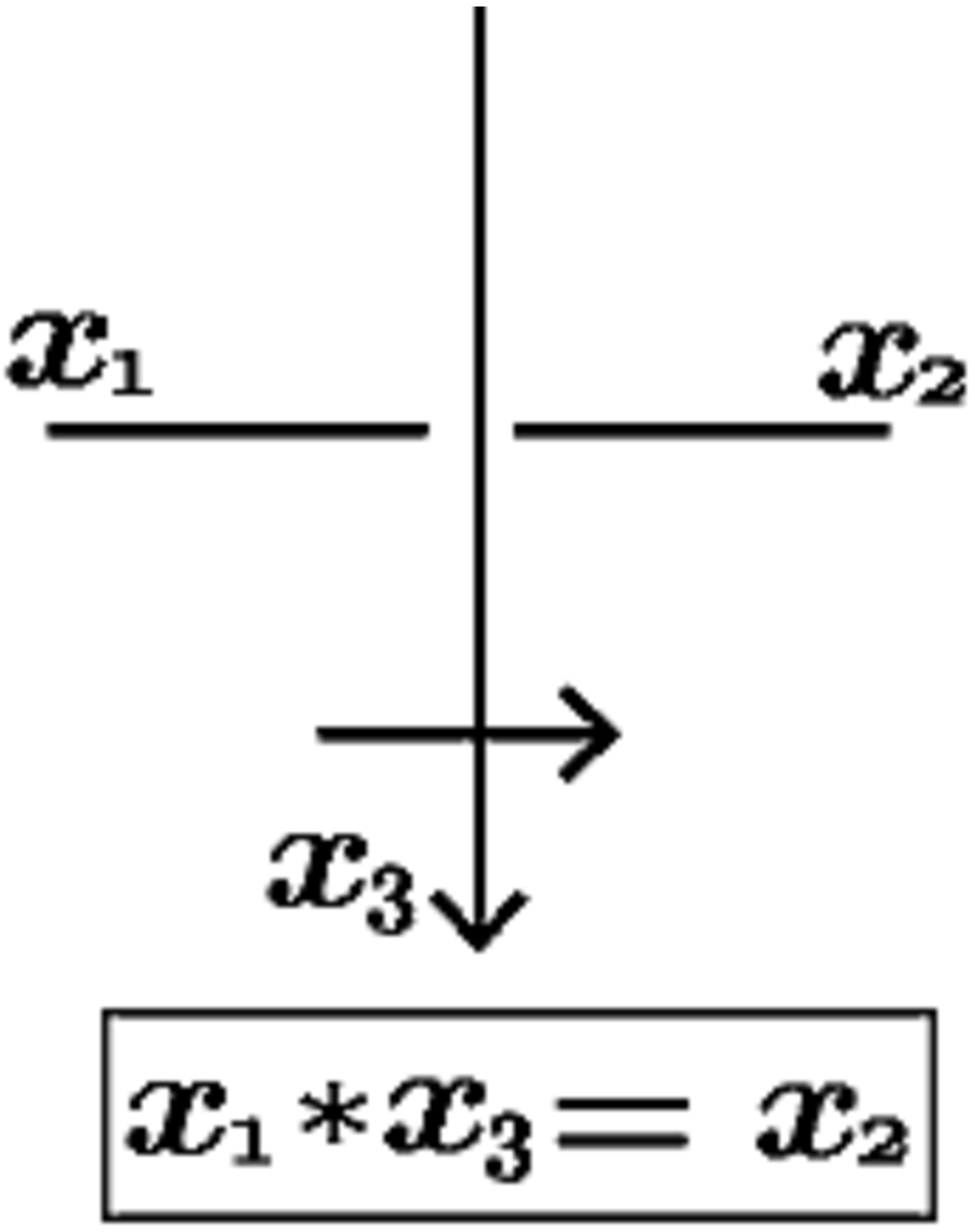}\ \hspace{2cm}
\includegraphics[width=25mm]{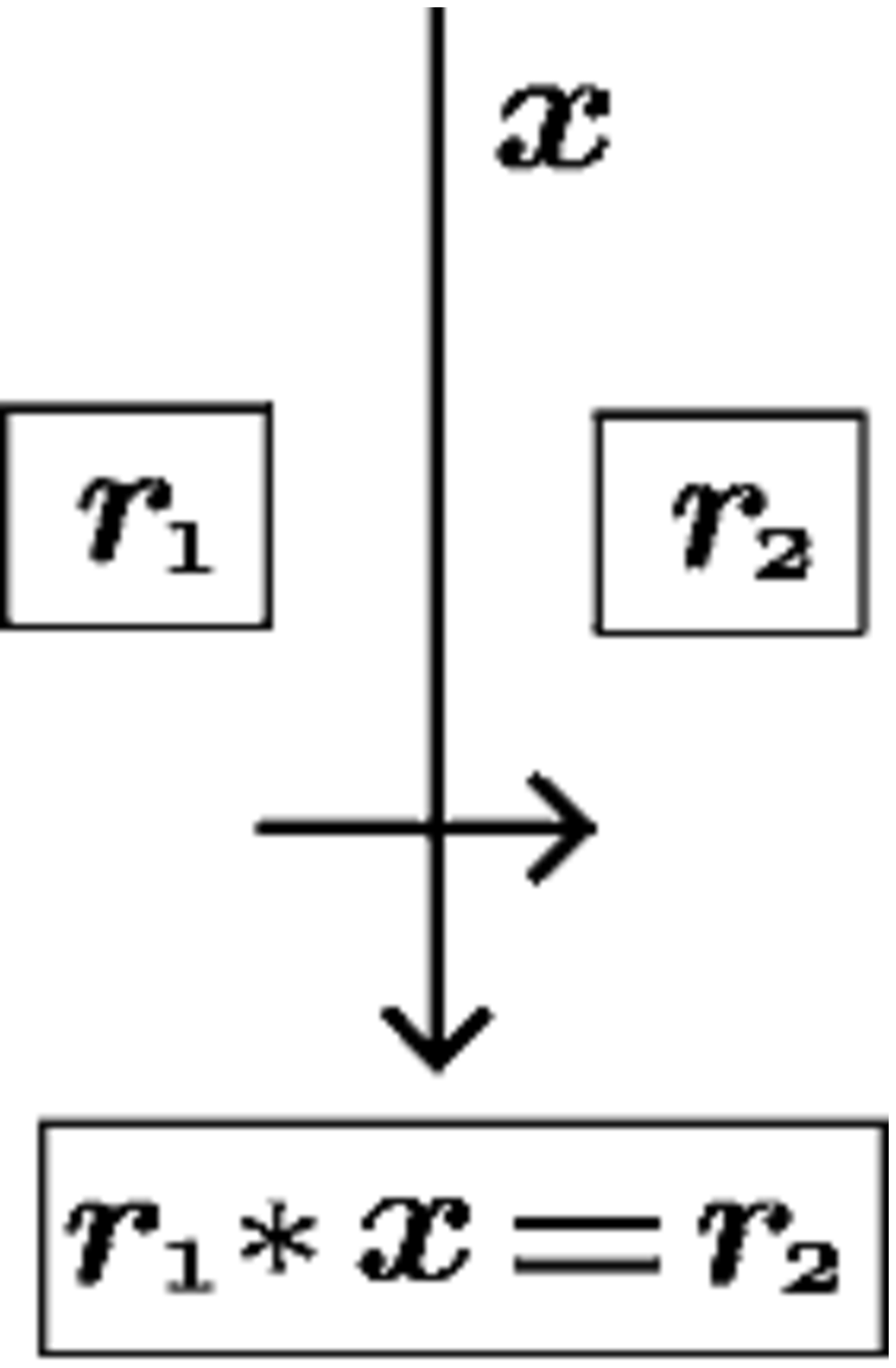}
\end{center}
\caption{The conditions of quandle colorings}
\label{quandlecoloring1}
\end{figure}

An {\it $X$-coloring} of $(D, o)$ is an assignment of  an element of $X$ to each arc  satisfying the following condition. 
\begin{itemize}
\item Suppose that two arcs $\alpha_1$ and $\alpha_2$ which are under-arcs at a crossing $\chi$ are labeled by $x_1$ and $x_2$, respectively, and the over arc, say $\alpha_3$,  of $\chi$ is labeled by $x_3$. 
We assume that the normal orientation of $\alpha_3$ points from $\alpha_1$ to $\alpha_2$.
Then $x_1 * x_3 =x_2$ holds. See the left of Figure~\ref{quandlecoloring1}.
\end{itemize}
We denote by ${\rm Col}_{X}(D, o)$ the set of $X$-colorings of $(D, o)$. 
We then have the following lemma.
\begin{lemma}[cf. \cite{CarterJelsovskyKamadaLangfordSaito03, CarterKamadaSaitobook}]
Let $(D, o)$ and $(D', o')$ be diagrams of oriented links.
If $(D, o)$ and $(D', o')$ represent the same oriented link, then there exists a one-to-one correspondence between ${\rm Col}_{X}(D,o)$ and ${\rm Col}_{X}(D',o')$. 
\end{lemma}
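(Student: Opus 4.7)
The plan is to invoke Reidemeister's theorem, which says that two diagrams of equivalent oriented links are related by a finite sequence of oriented Reidemeister moves R1, R2, R3 together with planar isotopies. Hence it suffices to produce, for each local move $D \leftrightarrow D'$, a natural bijection ${\rm Col}_X(D,o) \to {\rm Col}_X(D',o')$. Since planar isotopies trivially preserve colorings, only the three Reidemeister moves need to be analyzed.

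For each move, I would work locally inside the disk $\Delta$ where the move is performed: arcs of $D$ meeting $\partial \Delta$ correspond bijectively to arcs of $D'$ meeting $\partial \Delta$, and outside $\Delta$ the two diagrams agree. Given an $X$-coloring of $D$, the colors on arcs outside $\Delta$ are kept, the colors at the boundary arcs of $\Delta$ transfer to the corresponding boundary arcs of $D'$, and one checks (a)~that the quandle-coloring condition inside $\Delta$ for $D'$ is automatically satisfied, and (b)~that the colors of the new internal arcs of $D'$ are forced and well-defined. Each of these checks reduces to a single quandle axiom: R1 uses (Q1), since the loop created/removed carries a self-operation $x*x=x$; R2 uses (Q2), since one must invert the operation by the over-arc to determine the hidden arc, using the notation $x*y^{-1}$ and the uniqueness in (Q2); R3 uses (Q3), since the two ways of coloring the three arcs emerging at the bottom of the R3 picture coincide precisely by self-distributivity $(x*y)*z = (x*z)*(y*z)$. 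Because the four oriented variants of each move can be reduced among themselves, or handled directly by the mixed-sign version in Lemma~\ref{lem:1}, it suffices to verify one orientation per move type.

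The constructed map in each case has an obvious inverse, given by applying the same recipe to the reverse move, so it is a bijection. Composing the bijections along a sequence of moves connecting $(D,o)$ to $(D',o')$ yields the required one-to-one correspondence.

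The only real subtlety is bookkeeping: for R2 and R3 one has to pay attention to the normal-orientation convention fixed in Figure~\ref{normalori} so that the coloring rule $x_1*x_3 = x_2$ is applied with the correct roles of under-arcs and over-arc; in particular for R3 one must carefully label all six endpoints on $\partial \Delta$ and compute the colors of the three interior arcs on both sides, so that the comparison reduces cleanly to axiom (Q3). This is routine but is the step where sign errors are most likely to occur; Lemma~\ref{lem:1} is exactly the tool that absorbs the sign and orientation bookkeeping into a uniform formula.
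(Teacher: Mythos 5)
Your proposal is correct and is essentially the standard argument: the paper itself omits the proof and cites \cite{CarterJelsovskyKamadaLangfordSaito03, CarterKamadaSaitobook}, where exactly this Reidemeister-move-by-move verification (R1 via (Q1), R2 via (Q2), R3 via (Q3)) is carried out. Nothing is missing beyond the routine orientation bookkeeping you already flag.
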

 This implies that $\# {\rm Col}_{X}(D, o)$, that is the cardinality of ${\rm Col}_{X}(D, o)$,  is an invariant for oriented links, and hence, we also denote it by $\# {\rm Col}_{X}(L,o)$ and call it the {\it quandle coloring number} of $(L,o)$ when $X$ is finite, where $(L,o)$ is an oriented link which $(D,o)$ represents.
We note that in this paper, when a diagram $(D,o)$ represents an oriented link, we use the same symbol $o$ for the orientation of the oriented link.

 Let $Y$ be  an $X$-set.
An {\it $X_Y$-coloring} of $(D,o)$ is an $X$-coloring of $(D,o)$ with an assignment of an element of $Y$ to each complementary region of $D$ satisfying  the following condition. 
\begin{itemize}
\item Suppose that two complementary regions $\gamma_1$ and $\gamma_2$ are adjacent over an arc $\alpha$, and  $\gamma_1$, $\gamma_2$ and $\alpha$ are labeled by $r_1$, $r_2$ and $x$, respectively. We assume that the normal orientation of $\alpha$ points from $\gamma_1$ to $\gamma_2$. Then $r_1*x=r_2$ holds. See the right of Figure~\ref{quandlecoloring1}. 
\end{itemize}
We denote by ${\rm Col}_{X_Y}(D,o)$ the set of $X_Y$-colorings of $(D,o)$. Then we have the following lemma.
\begin{lemma}[cf.  \cite{CarterJelsovskyKamadaLangfordSaito03, CarterKamadaSaitobook}]
Let $(D,o)$ and $(D',o')$ be diagrams of oriented links.
If $(D,o)$ and $(D',o')$ represent the same oriented link, then there exists a one-to-one correspondence between ${\rm Col}_{X_Y}(D,o)$ and ${\rm Col}_{X_Y}(D',o')$. 
\end{lemma}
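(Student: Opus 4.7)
The plan is to invoke Reidemeister's theorem for oriented link diagrams: two diagrams represent the same oriented link if and only if they are related by a finite sequence of oriented Reidemeister moves (R1, R2, R3) together with planar isotopies. It therefore suffices, for each local move, to produce a bijection between the sets of $X_Y$-colorings on the two sides of the move that agrees with a given $X_Y$-coloring on the arcs and complementary regions lying outside the disk where the move takes place.

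The proof of the preceding lemma already supplies such a bijection at the level of arc labels, so I would fix such a matching and then extend it to the region labels. Concretely, once an outside region receives the label $r\in Y$, the defining rule $r_1 * x = r_2$, applied at each arc one crosses (with signs determined by the normal orientation), uniquely propagates a label to every other region inside the local disk on either side of the move. What has to be checked is that this propagation is well defined, in the sense that two different paths to the same region yield the same element of $Y$, and that the labels induced on the boundary of the disk agree on the two sides of the move.

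I would carry out these checks move by move. For R1, the only new region is the one enclosed by the kink; its label is determined unambiguously since only one arc separates it from the outside, and axiom (Q1) together with the arc identity $x*x=x$ ensures compatibility at the kink crossing. For an oriented R2, the new middle region is reached from the outside by two arcs carrying the same $X$-label up to a sign, so the two potential labels differ by $*x^{\varepsilon}$ followed by $*x^{-\varepsilon}$, which cancel by axiom (Q2) since $*x$ and $*x^{-1}$ are mutual inverses on the $X$-set $Y$. For R3, the central triangular region is reached by two paths corresponding to the two different orderings of the three crossings, and equality of the two resulting labels in $Y$ is precisely the $X$-set identity in Lemma~\ref{lem:2}; Lemma~\ref{lem:1} gives the analogous statement for the arc labels and simultaneously handles all sign variants of the oriented R3 move.

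The main difficulty I expect is not any single step but the bookkeeping of orientations over all sign variants of the oriented R2 and R3 moves. Using the normal-orientation convention of Figure~\ref{normalori} together with the symmetric, sign-aware form of the axioms encoded in Lemmas~\ref{lem:1} and~\ref{lem:2}, however, all these variants collapse to essentially the same three algebraic identities, which yields the desired bijection ${\rm Col}_{X_Y}(D,o)\to {\rm Col}_{X_Y}(D',o')$.
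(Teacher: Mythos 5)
The paper does not prove this lemma; it is quoted with a ``cf.'' reference to \cite{CarterJelsovskyKamadaLangfordSaito03, CarterKamadaSaitobook}, so there is no in-paper argument to compare against. Your proposal is the standard proof from those sources and is essentially correct: reduce to oriented Reidemeister moves, reuse the arc-level bijection from the preceding lemma, and check that the region labels propagate consistently inside the move disk, the key identity being Lemma~\ref{lem:2} (consistency of the four region labels around a single crossing). Two small remarks. First, your description of the R2 check conflates two separate verifications: the bigon region is adjacent to the two \emph{side} regions across pieces of the over-strand and the under-strand, which carry \emph{different} labels in general, and its well-definedness follows from the around-a-crossing identity at either of the two new crossings; the ``$*x^{\varepsilon}$ followed by $*x^{-\varepsilon}$ cancel'' step is instead what shows that the two boundary arcs of the original middle region still receive the same label after the move. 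Both checks are easy, but they are not the same check. Second, there is a cleaner route that avoids most of the region bookkeeping: Lemma~\ref{lem:2} shows that for any $X$-coloring the region labels are determined by the label of the unbounded region and that this propagation is consistent, giving a natural bijection ${\rm Col}_{X_Y}(D,o)\cong {\rm Col}_X(D,o)\times Y$; since a Reidemeister move is supported in a disk and does not touch the unbounded region, the lemma then follows immediately from the arc-coloring version.
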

 This implies that  $\# {\rm Col}_{X_Y}(D,o)$,  that is the cardinality of ${\rm Col}_{X_Y}(D,o)$, is an invariant for oriented links.

\subsection{Quandle cocycle invariants of oriented links}\label{Quandle cocycle invariants of oriented links}
In this subsection, we review the definition of a quandle cocycle invariant of an oriented link. 
We remark that several variations for quandle cocycle invariants are introduced so far, see \cite{CarterJelsovskyKamadaLangfordSaito03, CarterKamadaSaitobook}, and we adopt the definition of a  shadow quandle cocycle invariant with a quandle $X$ and an $X$-set $Y$.
We note that when $Y$ is a singleton, the shadow quandle cocycle invariant coincides with the quandle cocycle invariant with $X$ defined in \cite{CarterJelsovskyKamadaLangfordSaito03}, and when $Y=X$, the shadow quandle cocycle invariant coincides with that defined in \cite{CarterKamadaSaitobook}.

Let $(X,*)$ be a quandle and $Y$ an $X$-set.
Let $(D,o)$ be a diagram of an oriented link $(L,o)$ and $C$ an $X_Y$-coloring of $(D,o)$. 
For each crossing $\chi$ of $D$, we extract a weight $w_\chi$ as follows.
Let $\alpha_1$ and $\alpha_2$ denote the under-arc and the over-arc of $\chi$, respectively,  such that the normal orientation of $\alpha_2$ points from $\alpha_1$, and $\gamma$ the complementary region of $D$ around $\chi$ which the normal orientations of $\alpha_1$ and $\alpha_2$ point from.
We suppose that $\gamma$, $\alpha_1$ and $\alpha_2$ are labeled by $r$, $x_1$ and $x_2$, respectively. 
Then the {\it weight} of the crossing $\chi$ is  $w_{\chi}=\varepsilon (r, x_1, x_2)$, where $\varepsilon=+$ if $\chi$ is  positive and $\varepsilon=-$ if $\chi$ is  negative, see Figure~\ref{weight-quandle}.
We note that a crossing $\chi$ is {\it positive} if the pair of the normal orientations of the over-arc and an under-arc of $\chi$ matches the right-handed orientation of $\mathbb R^2$, and $\chi$ is {\it negative} otherwise.
Let $W^{X_Y}((D,o), C)$ denote the sum of the weights of all the crossings, and then, $W^{X_Y}((D,o),C)$ is a $2$-cycle of $C_*^{\rm Q}(X)_Y$.
Define
\[
\mathcal{W}^{X_Y} (D,o) = \{ [W^{X_Y}((D,o),C)] \in H_2^{\rm Q}(X)_Y ~|~ C\in {\rm Col}_{X_Y} (D,o)\}
\]
as a multi-set, where a multi-set is a collection of elements which may appear more than once.
\begin{figure}[t]
 \begin{center}
\includegraphics[width=80mm]{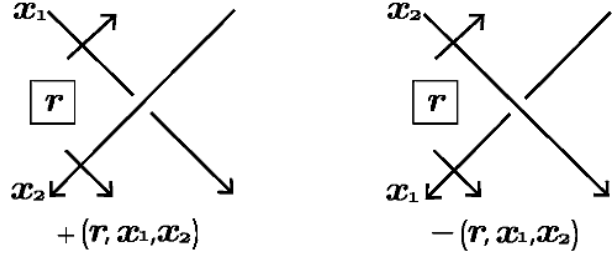}
\end{center}
\caption{The weight of a crossing}
\label{weight-quandle}
\end{figure}

For an abelian group $A$, let $\theta: C_2^{\rm Q}(X)_Y \to A$ be a quandle $2$-cocycle. We define the multi-set $\Phi_\theta^{X_Y}(D)$ by 
\[
\Phi_\theta^{X_Y}(D,o) = \{ \theta(W^{X_Y}((D,o),C)) \in A ~|~ C\in {\rm Col}_{X_Y} (D,o)\}.
\] 

\begin{theorem}[cf. \cite{CarterJelsovskyKamadaLangfordSaito03, CarterKamadaSaitobook}]
The multi-sets $\mathcal{W}^{X_Y}(D,o)$ and $\Phi_\theta^{X_Y} (D,o)$ are invariants of the link type of $(D,o)$.
\end{theorem}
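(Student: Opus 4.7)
The plan is to invoke Reidemeister's theorem: two oriented link diagrams represent equivalent oriented links if and only if they are connected by a finite sequence of planar isotopies and oriented Reidemeister moves R1, R2, R3. It therefore suffices to exhibit, for each such move taking $(D,o)$ to $(D',o')$, a natural bijection $\Psi\colon \mathrm{Col}_{X_Y}(D,o)\to\mathrm{Col}_{X_Y}(D',o')$ and to compare the weight cycles $W^{X_Y}((D,o),C)$ and $W^{X_Y}((D',o'),\Psi(C))$ in $C_2^{\mathrm Q}(X)_Y$. Well-definedness of $\Psi$ on colorings relies precisely on axioms (Q2) (uniqueness of the missing arc color) and on Lemma~\ref{lem:2} (consistency of the region labels in $Y$).

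For R1, the two arcs at the kink are forced to share a single color $x\in X$, so the single new crossing contributes a weight of the form $\pm(r,x,x)$. Such an element lies in $D_2(X)_Y$ and hence vanishes in $C_2^{\mathrm Q}(X)_Y$; the other weights and all region labels are unchanged. For R2, the two new crossings have opposite signs and share the same triple $(r,x_1,x_2)$ of region/strand labels; their weights $+(r,x_1,x_2)$ and $-(r,x_1,x_2)$ cancel, again leaving the weight cycle unchanged.

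The main work is R3. On each side of the move there are three crossings, producing six weight terms. Using (Q3) and Lemma~\ref{lem:2}, one checks that all arc colors and region labels around the local picture are completely determined by a single initial datum $(r,x_1,x_2,x_3)\in Y\times X^3$ (the region at the ``bottom corner'' together with the three strand colors entering the move), so the bijection $\Psi$ is canonical. A direct computation — formally identical to the classical case treated in \cite{CarterJelsovskyKamadaLangfordSaito03, CarterKamadaSaitobook} — identifies the difference of the two sums of three weights with $\partial_3^{\mathrm Q}(r,x_1,x_2,x_3)\in C_2^{\mathrm Q}(X)_Y$. This lies in $\mathrm{Im}\,\partial_3^{\mathrm Q}=B_2^{\mathrm Q}(X)_Y$, so the two weight cycles are homologous: $[W^{X_Y}((D,o),C)]=[W^{X_Y}((D',o'),\Psi(C))]$ in $H_2^{\mathrm Q}(X)_Y$. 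In particular, $\mathcal W^{X_Y}$ is unchanged as a multi-set. Since $\theta$ is a quandle $2$-cocycle, $\theta$ vanishes on this boundary, so $\theta(W^{X_Y}((D,o),C))=\theta(W^{X_Y}((D',o'),\Psi(C)))$ and $\Phi_\theta^{X_Y}$ is likewise preserved.

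The principal obstacle is the R3 computation: orchestrating the six simultaneous label assignments (both for arcs in $X$ and for the three visible regions in $Y$), verifying that the bijection $\Psi$ is well defined on the $Y$-component via Lemma~\ref{lem:2}, and checking the signs and orderings that turn the difference of weight sums into exactly $\partial_3^{\mathrm Q}(r,x_1,x_2,x_3)$. Once that identification is made, invariance under the remaining variant orientations of R1, R2, R3 follows by the same bookkeeping, and the theorem is complete.
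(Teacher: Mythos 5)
Your proposal is correct and is exactly the standard Reidemeister-move argument that the paper itself omits, deferring to the cited references \cite{CarterJelsovskyKamadaLangfordSaito03, CarterKamadaSaitobook}: R1 contributions die in the degenerate quotient $C_2^{\rm Q}(X)_Y = C_2(X)_Y/D_2(X)_Y$, R2 contributions cancel in pairs, and the R3 discrepancy is the boundary $\partial_3^{\rm Q}(r,x_1,x_2,x_3)$, on which any $2$-cocycle $\theta$ vanishes. The only point to keep in mind when writing it out in full is that one must verify a generating set of the oriented Reidemeister moves (all orientation/sign variants), which you already flag.
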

 We also denote these invariants by $\mathcal{W}^{X_Y} (L,o)$ and $\Phi_\theta^{X_Y}(L,o)$, respectively, and call them the {\it quandle homology  invariant} of $(L,o)$ by $X$ and $Y$ and the {\it quandle cocycle invariant} of $(L,o)$ by $X, Y$ and $\theta$, respectively.

\section{Symmetric quandles and invariants for unoriented links}\label{sec:3}
In this section, we  review the definitions of symmetric quandles, symmetric quandle (co)homology groups, link invariants using symmetric quandles.
We note again that while link invariants using quandles need orientations of links, link invariants using symmetric quandles do not use orientations of links (or surface-links). This enabled us to consider quandle invariants for links  (or surface-links) which are not necessarily oriented or orientable.

\subsection{Symmetric quandles}
A \textit{symmetric quandle} \cite{Kamada2006}, see also \cite{KamadaOshiro10}, is a pair $(X, \rho)$ of a quandle $(X,*)$ and a good involution $\rho$ of $X$, where an involution $\rho :X \to X$ is said to be {\it good } if 
\[\mbox{
(GI1) $\rho(x*y) = \rho(x)*y$~~ and  ~~
(GI2) $x*\rho(y)=x*y^{-1}$}
\]
for any $x,y\in X$.
For a symmetric quandle $(X,\rho)$, when we specify the symmetric quandle operation $*$, we also denote  by $(X,  \rho, *)$ the symmetric quandle $(X,\rho)$.

\begin{example}\label{ex:symmetricquandle1}
Let $X$ be a trivial quandle.
Any involution $\rho: X\to X$ is a good involution of $X$.
\end{example}
\begin{example}\label{ex:symmetricquandle3} 
Let $G$ be a group.
The inversion $\rho : {\rm Conj}\,G \to {\rm Conj}\,G; g\mapsto g^{-1}$ is a good involution of the conjugation quandle ${\rm Conj}\,G$.
\end{example}

\subsection{Symmetric quandle (co)homology theories} 
Kamada  \cite{Kamada2006}, see also \cite{KamadaOshiro10} in detail, defined (co)homology groups of symmetric quandles. In this subsection, we review the definition of a symmetric quandle homology group.

The {\it associated group} of  a symmetric quandle $(X,\rho, *)$ is 
\[
G_{(X,\rho)}=\langle x\in X ~|~x*y= y^{-1} x y ~(x,y\in X), ~~\rho(x)=x^{-1} ~(x \in X) \rangle.
\]
An {\it $(X,\rho)$-set} is a set $Y$ equipped with a right action of the associated group $G_{(X,\rho)}$. 
In this paper, we use the same symbol for the operation of a given symmetric quandle $(X,\rho)$ and the action on a given  $(X,\rho)$-set $Y$ by $G_{(X,\rho)}$, that is, we denote by $r*g$ the image of an element $r\in Y$ acted on by $g \in G_{(X,\rho)}$.

Let $(X,\rho,*)$ be a symmetric quandle and $Y$ be an $(X,\rho)$-set.
The chain group $C_n(X)_Y$, the boundary homomorphism $\partial_n : C_n (X)_Y \to C_{n-1} (X)_Y$, and the subgroup $D_n(X)_Y$ of $C_n(X)_Y$ are defined as in Subsection~\ref{subsec:quandlehomology}.

Let $D_n(X,\rho)_Y$ be the subgroup of $C_n(X)_Y$ generated by the elements of 
\[
\begin{array}{l}
\displaystyle \bigcup_{i=1}^n\Big\{ (r, x_1, \ldots , x_n)+(r*x_i, x_1*x_i, \ldots , x_{i-1}*x_i, \rho(x_i), x_{i+1},\ldots , x_n) \\
\hspace{8cm}\Big|~ r \in Y, x_1, \ldots , x_n\in X \Big\} .
\end{array}
\]
Then $D_*(X,\rho)_Y = \{ D_n(X,\rho)_Y, \partial _n\}_{n\in \mathbb Z}$ is a subchain complex of $C_* (X)_Y$.
Let $C_n^{\rm SQ} (X,\rho)_Y= C_n (X)_Y/ (D_n (X)_Y+D_n(X,\rho)_Y)$,  and we denote by $\partial_n^{\rm SQ}$ the induced boundary homomorphism $\partial_n^{\rm SQ}: C_n^{\rm SQ} (X,\rho)_Y \to C_{n-1}^{\rm SQ} (X,\rho)_Y$.  
The quotient chain complex $C_*^{\rm SQ} (X,\rho)_Y=\{ C_n^{\rm SQ} (X,\rho)_Y, \partial_n^{\rm SQ} \}_{n\in \mathbb Z}$ leads to the {\it symmetric quandle homology group} of $(X,\rho)$ and $Y$ defined by $H_n^{\rm SQ} (X,\rho)_Y= {\rm Ker} \partial_n^{\rm SQ}/ {\rm Im} \partial_{n+1}^{\rm SQ}$.

For an abelian group $A$, we define the chain complex $C_*^{\rm SQ} (X,\rho ;A)_Y =\{ C_n^{\rm SQ} (X,\rho)_Y\otimes A, \partial_n^{\rm SQ} \otimes {\rm id} \}_{n\in \mathbb Z}$. 
The homology groups are denoted by $H_*^{\rm SQ} (X,\rho; A)_Y$.
Note that when $A=\mathbb Z$, $H_*^{\rm SQ} (X,\rho; \mathbb Z)_Y$ represents $H_*^{\rm SQ} (X,\rho)_Y$.
We also define the cochain group $C_{\rm SQ}^n (X,\rho ; A)_Y$ and the coboundary homomorphism $\delta_{\rm SQ}^n : C_{\rm SQ}^n (X,\rho; A)_Y \to C_{\rm SQ}^{n+1} (X,\rho; A)_Y$  
by $C_{\rm SQ}^n (X,\rho; A)_Y = {\rm Hom}(C_n^{\rm SQ}(X,\rho)_Y, A)$ and $\delta_{\rm SQ}^{n} (\theta)=  \theta \circ \partial ^{\rm SQ}_{n+1}$, respectively. 
The {\it symmetric quandle cohomology group with coefficients in $A$} is defined  
by $H_{\rm SQ}^n (X,\rho; A)_Y={\rm Ker} \delta_{\rm SQ}^n/{\rm Im} \delta_{\rm SQ}^{n-1}$, and
we denote by $Z_{\rm SQ}^n (X,\rho; A)_Y$ and $B_{\rm SQ}^n (X,\rho; A)_Y$ the cocycle group ${\rm Ker} \delta_{\rm SQ}^n$ and the coboundary group ${\rm Im} \delta_{\rm SQ}^{n-1}$, respectively.

\subsection{Symmetric quandle colorings} \label{subsec:symmetricquandlecoloring}
Let $(X,\rho,*)$ be a symmetric quandle and $D$ an (unoriented) link diagram. 
A {\it semi-arc} of $D$ is a connected component after removing the regular neighborhoods of the crossings of $D$. 

An {\it $(X,\rho)$-coloring} of $D$ is the equivalence class of an assignment of a normal orientation and an element of $ X $ to each semi-arc  satisfying the following conditions. 
\begin{itemize}
\item Suppose that two semi-arcs coming from an over-arc of $D$ at a crossing are labeled by $x_1$ and $x_2$. 
If the normal orientations are coherent, then $x_1=x_2$, otherwise $x_1=\rho(x_2)$. See the left  upper half of Figure~\ref{symmetricquandlecoloring1}.
\item Suppose that two semi-arcs $\alpha_1$ and $\alpha_2$ which are under-arcs at a crossing $\chi$ are labeled by $x_1$ and $x_2$, respectively, and suppose that one of the semi-arcs coming from the over arc, say $\alpha_3$, of $\chi$ is labeled by $x_3$. 
We assume that the normal orientation of $\alpha_3$ points from $\alpha_1$ to $\alpha_2$.
If the normal orientations of $\alpha_1$ and $\alpha_2$ are coherent, then $x_1 * x_3 =x_2$ holds, otherwise $x_1*x_3 = \rho(x_2)$ holds.
See the  left lower  half of Figure~\ref{symmetricquandlecoloring1}.
\end{itemize}
\begin{figure}[t]
 \begin{center}
\includegraphics[width=110mm]{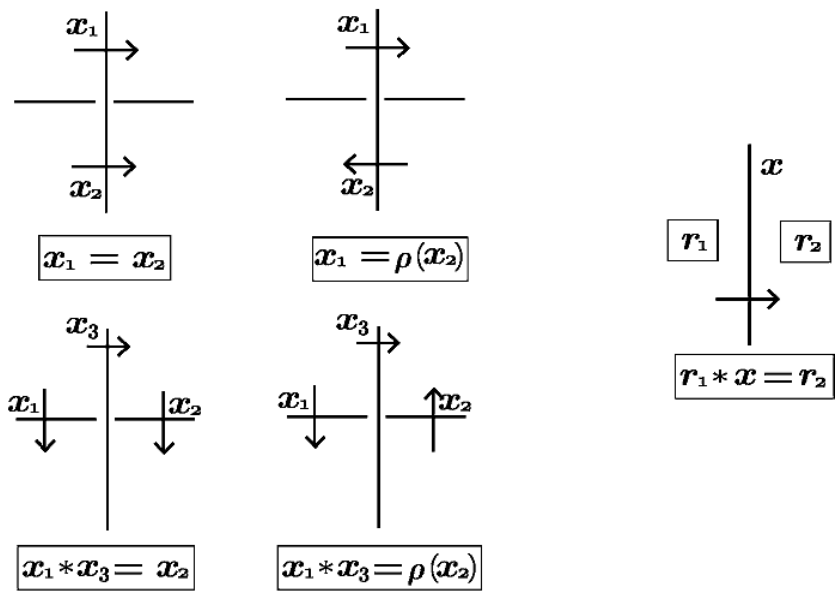}
\end{center}
\caption{The coloring conditions of symmetric quandle colorings}
\label{symmetricquandlecoloring1}
\end{figure}
Here the equivalence relation is generated by {\it basic inversions} which are operations of replacing the normal orientation of a semi-arc with the inverse one  and the element $x$, assigned to the semi-arc, with $\rho(x)$, see Figure~\ref{basicinversion}.  
\begin{figure}[t]
 \begin{center}
\includegraphics[width=50mm]{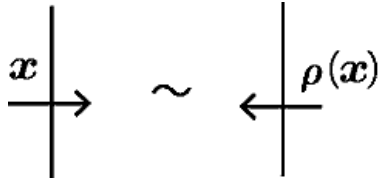}
\end{center}
\caption{A basic inversion}
\label{basicinversion}
\end{figure}
We denote by ${\rm Col}_{(X,\rho)}(D)$ the set of $(X,\rho)$-colorings of $D$. We then have the following lemma.
\begin{lemma}[\cite{Kamada2006, KamadaOshiro10}]
Let $D$ and $D'$ be diagrams of (unoriented) links.
If $D$ and $D'$ represent the same link, then there exists a one-to-one correspondence between ${\rm Col}_{(X, \rho)}(D)$ and ${\rm Col}_{(X,\rho)}(D')$. 
\end{lemma}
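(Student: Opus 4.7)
The plan is to verify Reidemeister invariance directly. By the standard theorem of Reidemeister (in the unoriented setting), two diagrams $D$ and $D'$ represent the same link if and only if they are related by a finite sequence of planar isotopies and the three Reidemeister moves R1, R2, R3. Thus it suffices to exhibit, for each such move, a natural bijection between the colorings on its two sides; composing these bijections along the move sequence produces the asserted correspondence between ${\rm Col}_{(X,\rho)}(D)$ and ${\rm Col}_{(X,\rho)}(D')$.

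For each move, I would localize to a small disk containing the move, keep the coloring fixed outside, and show that every coloring of the boundary semi-arcs extends uniquely to a coloring of the interior on each side. Since our colorings are equivalence classes under basic inversions, I would first use basic inversions to put the boundary semi-arcs into a chosen canonical pattern of normal orientations, so that for the verification itself each semi-arc carries a fixed normal orientation and a fixed element of $X$. Then: (R1) the extension exists and is unique by axiom (Q1), together with the fact that replacing the auxiliary loop's normal orientation forces, via a basic inversion, the substitution $x \mapsto \rho(x)$, which is consistent because $\rho$ is an involution; (R2) the unique extension exists by axiom (Q2), and when the two strands of the move have incoherent normal orientations the extension uses the $\rho$-twisted version of the coloring rule, whose consistency is exactly axiom (GI1) (applied to the label transported across the over-arc) combined with the identity $x*\rho(y)=x*y^{-1}$ from (GI2); (R3) the two induced labels agree by Lemma~\ref{lem:1}, which is the signed version of (Q3), and the case analysis over the $2^3$ choices of normal orientations at the three strands is reduced to the coherent case by (GI1) and (GI2).

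Two bookkeeping points need to be handled carefully. First, the bijection must descend to equivalence classes: if one applies a basic inversion to a semi-arc in $D$, the corresponding semi-arc (or semi-arcs, since an arc may split under a move) in $D'$ should differ by a composition of basic inversions. This follows because the local rules I use are formulated symmetrically in $(x, \rho(x))$, so a basic inversion on an input semi-arc propagates to basic inversions on the output semi-arcs. Second, for R1 and R2 one must check that the semi-arcs created inside the move inherit a well-defined label independent of the normal-orientation choice made during the verification; this again reduces to the two identities (GI1) and (GI2), which are precisely designed for this purpose.

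The main obstacle I expect is the case analysis, not any genuinely new identity: Reidemeister moves in the unoriented setting multiply the number of normal-orientation configurations that must be checked compared with the oriented case, and one must verify that in every configuration the coloring rule of Subsection~\ref{subsec:symmetricquandlecoloring} can be rewritten as the standard quandle rule after twisting some labels by $\rho$. The good-involution axioms (GI1), (GI2) are exactly what makes every such twist cancel correctly, so the verification is mechanical but requires a disciplined enumeration. Once all three moves are handled, the bijections glue along the move sequence to give the claimed one-to-one correspondence between ${\rm Col}_{(X,\rho)}(D)$ and ${\rm Col}_{(X,\rho)}(D')$.
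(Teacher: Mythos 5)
The paper does not prove this lemma itself but quotes it from \cite{Kamada2006, KamadaOshiro10}, where the argument is exactly the one you outline: a Reidemeister-move-by-move verification that boundary colorings extend uniquely across each move, with (Q1), (Q2), and the signed self-distributivity of Lemma~\ref{lem:1} handling R1, R2, R3 respectively, and (GI1)/(GI2) absorbing the normal-orientation twists and guaranteeing descent to basic-inversion classes. Your proposal is a correct plan along essentially the same lines as the cited proof, with the remaining work being the mechanical enumeration of orientation configurations that you already identify.
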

 This implies that $\# {\rm Col}_{(X,\rho)}(D)$, that is the cardinality of ${\rm Col}_{(X,\rho)}(D)$, is an invariant for links, and hence, we also denote it by $\# {\rm Col}_{(X,\rho)}(L)$ and call it the {\it symmetric quandle coloring number} of $L$ when $X$ is finite, where $L$ is a link which $D$ represents.

Let $Y$ be an $(X,\rho)$-set. An {\it $(X,\rho)_Y$-coloring} of $D$ is an $(X,\rho)$-coloring of $D$ with an assignment of an element of $Y$ to each complementary region of $D$ satisfying the following condition.  
\begin{itemize}
\item Suppose that two complementary regions $\gamma_1$ and $\gamma_2$ are adjacent over a semi-arc $\alpha$, and  $\gamma_1$, $\gamma_2$ and $\alpha$ are labeled by $r_1$, $r_2$ and $x$, respectively. We assume that the normal orientation of $\alpha$ points from $\gamma_1$ to $\gamma_2$. Then $r_1*x=r_2$ holds.  See the right of Figure~\ref{symmetricquandlecoloring1}.
\end{itemize}
We denote by ${\rm Col}_{(X,\rho)_Y}(D)$ the set of $(X,\rho)_Y$-colorings of $D$. Then we have the following lemma.
\begin{lemma}[\cite{Kamada2006, KamadaOshiro10}]
Let $D$ and $D'$ be diagrams of (unoriented) links.
If $D$ and $D'$ represent the same link, then there exists a one-to-one correspondence between ${\rm Col}_{(X,\rho)_Y}(D)$ and ${\rm Col}_{(X,\rho)_Y}(D')$. 
\end{lemma}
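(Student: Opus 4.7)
The plan is to follow the standard Reidemeister-moves strategy. By Reidemeister's theorem for unoriented link diagrams, it suffices to produce, for any two diagrams $D,D'$ related by a single Reidemeister move of type I, II, or III (or a planar isotopy), a bijection ${\rm Col}_{(X,\rho)_Y}(D) \to {\rm Col}_{(X,\rho)_Y}(D')$. The semi-arc part of such a bijection is already provided by the analogous lemma for $(X,\rho)$-colorings recorded above, so the real work lies in transporting and checking the region labels.

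Given an $(X,\rho)_Y$-coloring $C$ of $D$, I would first transport its semi-arc data to $D'$ via the $(X,\rho)$-coloring bijection. Each Reidemeister move is supported in a small disk $\Delta$ in the plane, and the complementary regions of $D'$ outside $\Delta$ correspond naturally to those of $D$ outside $\Delta$; assign to them the $Y$-labels inherited from $C$. For any complementary region of $D'$ lying inside $\Delta$, propagate a label by the region rule $r_1 * x = r_2$ starting from any label already attached to an adjacent outside region, using that $x \in X$ acts on $Y$ through $G_{(X,\rho)}$ and that the inverse action $r_2 \mapsto r_2 * x^{-1} = r_2 * \rho(x)$ is well-defined thanks to the defining relation $\rho(x)=x^{-1}$ in $G_{(X,\rho)}$. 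This construction is manifestly invariant under basic inversions of the semi-arc data, since replacing $x$ by $\rho(x)$ and interchanging the roles of $r_1$ and $r_2$ preserves the rule by the same relation, and the region labels themselves are untouched by basic inversions.

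The content is then to verify that this propagated assignment is consistent at every semi-arc of $D'$ inside $\Delta$. For a type I move the consistency reduces to $r * x = r$, which follows from axiom (Q1) and its consequence $x * x = x$ in $G_{(X,\rho)}$. For a type II move one distinguishes the coherent and opposite subcases of the two normal orientations; the coherent case is $r * x * x^{-1} = r$, while the opposite case reduces to it via (GI2). The genuinely nontrivial case is the type III move, where three strands create four internal regions inside $\Delta$ whose labels must all fit together; the required identity is exactly $(r * x^{\varepsilon}) * y^{\delta} = (r * y^{\delta}) * (x * y^{\delta})^{\varepsilon}$, which is Lemma~\ref{lem:2}. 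I expect the type III check, combined with the bookkeeping of normal orientations up to basic inversions (mechanical, because each inversion is absorbed by (GI1)--(GI2) acting through $G_{(X,\rho)}$ on $Y$), to be the main obstacle.

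Finally, bijectivity is automatic: running the inverse Reidemeister move supplies a two-sided inverse, since the region labels outside $\Delta$ are preserved on the nose and those inside are uniquely determined by any single outside label together with the rule $r_1 * x = r_2$.
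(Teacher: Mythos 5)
The paper itself gives no proof of this lemma; it is quoted from \cite{Kamada2006, KamadaOshiro10}, and the argument in those references is exactly the Reidemeister-move strategy you outline. So your overall plan is the right one: transport the semi-arc data by the already-established bijection for $(X,\rho)$-colorings, match regions outside the disk supporting the move, propagate region labels inside the disk by the rule $r_1 * x = r_2$, and observe that everything is compatible with basic inversions because $\rho(x)=x^{-1}$ in $G_{(X,\rho)}$. Your remark that a basic inversion turns $r_1*x=r_2$ into $r_2*\rho(x)=r_1$, which is the same relation, is the correct and essential well-definedness check.

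There are two genuine problems with the write-up as it stands. First, the consistency conditions you assign to the individual moves are not the right ones. The inner region of a type I kink is \emph{not} required to carry the same label $r$ as the adjacent outer region (so ``$r*x=r$'' is not the condition); it receives a new label $r*x^{\pm1}$, and the only thing to verify is that the monodromy of the $G_{(X,\rho)}$-action around the new crossing is trivial. In fact the clean way to organize all three cases is to note that, once the semi-arc labels satisfy the coloring conditions, a region labeling exists and is determined by one region label if and only if the monodromy around \emph{every} crossing is trivial, and that monodromy is killed uniformly by Lemma~\ref{lem:2} together with (GI1), (GI2) and the crossing relation --- there is no separate identity per move type. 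Second, and more seriously, you explicitly defer the type III verification (``I expect the type III check \dots\ to be the main obstacle''), which is precisely where the content of the lemma lies; as written the proposal is a correct outline rather than a complete proof. Carrying out the crossing-monodromy computation once, in the form $(r * x^{\varepsilon})*y^{\delta} = (r*y^{\delta})*(x*y^{\delta})^{\varepsilon}$ with all sign and $\rho$ bookkeeping made explicit, would close the gap and simultaneously dispose of all three move types.
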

 This implies that $\# {\rm Col}_{(X,\rho)_Y}(D)$, that is the cardinality of ${\rm Col}_{(X,\rho)_Y}(D)$,  is an invariant for links.

\subsection{Symmetric quandle cocycle invariants of (unoriented) links}\label{Symmetric quandle cocycle invariants of (unoriented) links}

Let $(X,\rho,*)$ be a symmetric quandle and $Y$ an $(X,\rho)$-set.
Let $D$ be a diagram of an (unoriented) link $L$ and $C$ an $(X,\rho)_Y$-coloring of $D$. 
For each crossing $\chi$ of $D$, we extract a weight $w_\chi$ as follows.
For a crossing $\chi$ of $D$, there are four complementary regions of $D$ around $\chi$. We choose one of the regions, say $\gamma$, and call it the {\it specified region} for $\chi$.
Let $\alpha_1$ and $\alpha_2$ denote the under-semi-arc and the over-semi-arc of $\chi$, respectively,  which faces the specified region $\gamma$. 
By basic inversions, we may assume that the normal orientations $n_1$ and $n_2$ of $\alpha_1$ and $\alpha_2$, respectively, point from $\gamma$.
Let $x_1$, $x_2$ and $r$ be the labels of $\alpha_1$, $\alpha_2$ and $\gamma$, respectively.
Then the {\it weight} of the crossing $\chi$ is  $w_{\chi}=\varepsilon (r, x_1,x_2)$, where  if the pair $(n_1, n_2)$ of the normal orientations $n_1$ and $n_2$ assigned to $\alpha_1$ and $\alpha_2$, respectively, matches the right-handed orientation of $\mathbb R^2$, then  $\varepsilon=+$, otherwise $\varepsilon=-$, see Figure~\ref{weight-symmetricquandle}.   
\begin{figure}[t]
 \begin{center}
\includegraphics[width=90mm]{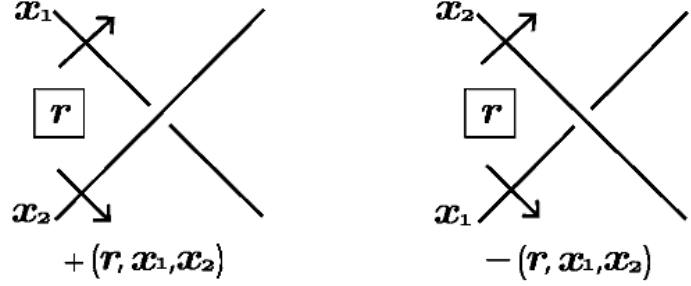}
\end{center}
\caption{The weight of a crossing}
\label{weight-symmetricquandle}
\end{figure}
Let $W^{(X,\rho)_Y}(D,C)$ denote the sum of the weights of all the crossings, and then, $W^{(X,\rho)_Y}(D,C)$ is a $2$-cycle of $C_*^{\rm SQ}(X,\rho)_Y$. 
Define
\[
\mathcal{W}^{(X,\rho)_Y} (D) = \{ [W^{(X,\rho)_Y}(D,C)] \in H_2^{\rm SQ}(X,\rho)_Y ~|~ C\in {\rm Col}_{(X,\rho)_Y} (D)\}
\]
as a multi-set.

For an abelian group $A$, let $\theta: C_2^{\rm SQ}(X,\rho)_Y \to A$ be a symmetric quandle $2$-cocycle. We define the multi-set $\Phi_\theta^{(X,\rho)_Y}(D)$ by 
\[
\Phi_\theta^{(X,\rho)_Y}(D) = \{ \theta(W^{(X,\rho)_Y}(D,C)) \in A ~|~ C\in {\rm Col}_{(X,\rho)_Y} (D)\}.
\] 

\begin{theorem}[\cite{Kamada2006, KamadaOshiro10}]
The multi-sets $\mathcal{W}^{(X,\rho)_Y}(D)$ and $\Phi_\theta^{(X,\rho)_Y} (D)$ are invariants of the link type of $D$.
\end{theorem}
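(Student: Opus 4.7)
The plan is to establish the theorem in three stages: (i) show that $W^{(X,\rho)_Y}(D,C)$ is a well-defined element of $C_2^{\rm SQ}(X,\rho)_Y$, independent of the choices made in its definition; (ii) confirm it is a $2$-cycle; (iii) verify invariance of the homology class under the three Reidemeister moves, possibly after a bijection on $(X,\rho)_Y$-colorings. The statement for $\Phi_\theta^{(X,\rho)_Y}$ is then immediate because $\theta$, being a cocycle, descends to a well-defined map on $H_2^{\rm SQ}(X,\rho)_Y$.

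For well-definedness, two choices are made at each crossing: the specified region $\gamma$ (one of four), and the representative of the equivalence class of normal orientations on the surrounding semi-arcs. A basic inversion applied to an under-semi-arc $\alpha_1$ simultaneously replaces its label $x_1$ with $\rho(x_1)$ and flips the sign $\varepsilon$; the resulting change in $\varepsilon(r,x_1,x_2)$ is precisely a generator of the subgroup $D_2(X,\rho)_Y$, and similarly for a basic inversion on the over-semi-arc (which affects $x_2$). Changing the specified region across an under-semi-arc shifts $(r,x_1,x_2)$ to $(r*x_1, x_1, x_2)$-type data and flips the sign, again producing a $D_2(X,\rho)_Y$-relator; changing it across the over-semi-arc uses the axiom $r*x_2 \cdot x_1 = r*x_2$ setup producing a term with a repeated coordinate, which lies in $D_2(X)_Y$. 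Enumerating these four cases around a single crossing shows that the class of $w_\chi$ in $C_2^{\rm SQ}(X,\rho)_Y$ is intrinsic.

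For the cycle condition, one computes $\partial_2^{\rm SQ} W^{(X,\rho)_Y}(D,C)$ and observes the standard cancellation: each semi-arc of $D$ has two endpoints at crossings, and the boundary contributions from those endpoints cancel because the relevant region labels and semi-arc labels agree while the signs are opposite (this uses Lemma~\ref{lem:2} to rewrite labels across over-arcs). For Reidemeister invariance of the multi-set $\mathcal{W}^{(X,\rho)_Y}(D)$, one exhibits in each case a natural bijection ${\rm Col}_{(X,\rho)_Y}(D) \to {\rm Col}_{(X,\rho)_Y}(D')$ and shows the weight cycles differ by a boundary. For R1, the new crossing contributes a weight of the form $\pm(r,x,x)$, lying in $D_2(X)_Y$. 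For R2, the two new crossings have opposite signs; depending on whether the new under-semi-arcs' normal orientations are coherent or opposite, their sum lies either in $D_2(X)_Y$ (same labels) or in $D_2(X,\rho)_Y$ ($x$ versus $\rho(x)$).

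The main obstacle is the R3 computation: one must build an explicit $3$-chain $c \in C_3^{\rm SQ}(X,\rho)_Y$ whose $\partial_3^{\rm SQ}$ equals the difference of the six weights before and after the move. In the oriented quandle setting this is the standard R3 identity for $\partial_3^{\rm Q}$; when we descend to the symmetric quandle quotient, the flexibility provided by the $D_n(X,\rho)_Y$ relators absorbs precisely the sign and $\rho$-twist discrepancies that arise from making a consistent choice of specified regions on both sides of the move. The bookkeeping of normal orientations at the three crossings of each side is the delicate point, but after normalizing by basic inversions so that all normals of the three strands point coherently, the identity reduces to the oriented R3 computation.
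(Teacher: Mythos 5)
A preliminary remark: the paper offers no proof of this theorem --- it is stated with the citation \cite{Kamada2006, KamadaOshiro10} and used as background --- so your proposal can only be measured against the standard argument of those references. Your three-stage plan (well-definedness of the weight in $C_2^{\rm SQ}(X,\rho)_Y$, the cycle condition, Reidemeister invariance up to boundaries and degeneracies, with the statement for $\Phi_\theta^{(X,\rho)_Y}$ following formally since a cocycle descends to homology) is exactly that standard argument, and your R1/R2 analysis and the division of labour between $D_2(X)_Y$ and $D_2(X,\rho)_Y$ are essentially right.

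Two points, one an error and one a gap. The error: when the specified region is moved across the \emph{over}-semi-arc, the weight changes from $\varepsilon(r,x_1,x_2)$ to $-\varepsilon(r*x_2,\, x_1*x_2,\, \rho(x_2))$, so the discrepancy is the $i=2$ generator $(r,x_1,x_2)+(r*x_2, x_1*x_2, \rho(x_2))$ of $D_2(X,\rho)_Y$ --- not, as you write, a repeated-coordinate element of $D_2(X)_Y$; no coordinate repeats there, and the displayed ``axiom'' $r*x_2\cdot x_1 = r*x_2$ is not a correct identity. (Moving across the under-semi-arc gives the $i=1$ generator, as you correctly state.) Relatedly, once the specified region $\gamma$ is fixed, the requirement that both normals point away from $\gamma$ pins down the representative of the coloring class at that crossing, so there is no independent basic-inversion ambiguity to check; that dependence is entirely absorbed into the choice of $\gamma$. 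The gap: the R3 case, which you yourself identify as the main obstacle, is asserted rather than carried out. The reduction you describe --- normalize the three local strands by basic inversions so the picture is coherently co-oriented, use region-independence of each weight to choose specified regions compatibly on both sides, and then invoke the oriented identity expressing the difference of the six weights as $\pm\partial_3$ of a $3$-chain --- is the correct reduction and does succeed, but as written it is a plan; a complete proof must exhibit that $3$-chain and verify that the residual sign and $\rho$-twist discrepancies land in $D_3(X)_Y + D_3(X,\rho)_Y$ and $D_2(X)_Y+D_2(X,\rho)_Y$ as claimed.
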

 We also denote these invariants by $\mathcal{W}^{(X,\rho)_Y} (L)$ and $\Phi_\theta^{(X,\rho)_Y}(L)$, respectively, and call them the  {\it symmetric quandle homology invariant} of $L$ by $(X,\rho)$ and $Y$ and the {\it symmetric quandle cocycle invariant} of $L$ by $(X,\rho), Y$ and $\theta$, respectively.

\section{Quandle homology groups are interpreted as symmetric quandle homology groups}\label{sec:4}
In this section, we show that for any quandle $X$ and any $X$-set $Y$, the quandle homology groups of $X$ and $Y$ is obtained as symmetric quandle homology groups.
\subsection{The symmetric double of a quandle}
Let $(X,*)$ be a quandle. 
We set two copies $X^{+1}$ and $X^{-1}$ of $X$ by 
\[
X^{\varepsilon} = \{ x^{\varepsilon} ~|~ x\in X\} ~~~~~(\epsilon \in \{\pm 1\})
\]
and let  $D_X=X^{+1} \sqcup X^{-1}$.
For $x^{\varepsilon}\in D_X$, we call $\varepsilon$ the {\it sign} of $x^{\varepsilon}$. 
Define a binary operation $\star: D_X \times D_X \to D_X$ by 
\[
x^{\varepsilon} \star y^\delta= (x*y^{\delta})^{\varepsilon} ~~~\mbox{ for $x,y \in X$ and $\varepsilon, \delta\in \{\pm1\}$.}
\]

\begin{lemma}
The set $D_X$ is a quandle with the operation $\star: D_X \times D_X \to D_X$.
\end{lemma}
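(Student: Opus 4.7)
The plan is to verify the three quandle axioms (Q1)--(Q3) for $(D_X,\star)$ directly from the definition $x^\varepsilon \star y^\delta = (x*y^\delta)^\varepsilon$, relying on the quandle axioms of $(X,*)$ together with Lemma~\ref{lem:1}. The axioms for $D_X$ should essentially be reductions to those of $X$, with the extra sign $\varepsilon$ being passively carried along and never altered by the operation $\star$.

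For (Q1), I would compute $x^\varepsilon \star x^\varepsilon = (x*x^\varepsilon)^\varepsilon$ and observe that $x*x^{+1}=x$ by (Q1) for $X$, while $x*x^{-1}=x$ follows from (Q2) applied to the equation $z*x = x$ whose unique solution is $z=x$. Thus $x*x^\varepsilon = x$ in both cases, giving $x^\varepsilon \star x^\varepsilon = x^\varepsilon$. For (Q2), given $x^\varepsilon,y^\delta \in D_X$, I need to find the unique $w^\gamma$ with $w^\gamma \star y^\delta = x^\varepsilon$, i.e., $(w*y^\delta)^\gamma = x^\varepsilon$. Comparing signs forces $\gamma=\varepsilon$, and comparing the underlying elements reduces to solving $w*y^\delta = x$ in $X$. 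When $\delta=+1$, this is (Q2) for $X$ and has the unique solution $w = x*y^{-1}$; when $\delta=-1$, the unique solution is $w = x*y$ (since $(x*y)*y^{-1}=x$ by applying (Q2) twice in $X$). In either case $w^\varepsilon$ is uniquely determined.

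For (Q3), the left-hand side expands to
\[
(x^\varepsilon \star y^\delta)\star z^\gamma = \bigl((x*y^\delta)*z^\gamma\bigr)^\varepsilon,
\]
and the right-hand side expands to
\[
(x^\varepsilon \star z^\gamma)\star(y^\delta \star z^\gamma) = \bigl((x*z^\gamma)*(y*z^\gamma)^\delta\bigr)^\varepsilon.
\]
Since the outer sign is $\varepsilon$ on both sides, it suffices to show $(x*y^\delta)*z^\gamma = (x*z^\gamma)*(y*z^\gamma)^\delta$ in $X$, which is precisely the content of Lemma~\ref{lem:1} (with the roles of $\varepsilon,\delta$ there played by $\delta,\gamma$ here).

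I do not anticipate any genuine obstacle: the design of $\star$ is arranged so that the sign component is inert under the operation, while the underlying-element component obeys exactly the generalized quandle identities already recorded in Lemma~\ref{lem:1}. The only mild care needed is in (Q2), where one must check that the sign $\gamma$ is forced by the equation and that existence/uniqueness of $w$ holds for both values of $\delta$; this uses (Q2) for $X$ together with the convention $x*y^{-1}$ introduced just after the quandle axioms.
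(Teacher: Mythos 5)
Your proposal is correct and follows essentially the same route as the paper: verify (Q1) via $x*x^{\varepsilon}=x$, verify (Q2) by noting the sign is forced and the underlying element is the unique solution $x*y^{-\delta}$, and reduce (Q3) to Lemma~\ref{lem:1}. The extra detail you supply for the $\delta=-1$ cases of (Q1) and (Q2) is exactly what the paper leaves implicit.
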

\begin{proof}
{\rm (Q1)} For any $x^\varepsilon \in D_X$, we have $x^\varepsilon \star x^\varepsilon = (x*x^\varepsilon)^\varepsilon = x^\varepsilon$.
{\rm (Q2)} For any $x^\varepsilon , y^\delta \in D_X$, we set $z=x*y^{-\delta}$ and $\zeta=\varepsilon$, and then we have $z^{\zeta} \star y^\delta = (z*y^{\delta})^\zeta = x^{\varepsilon}$. 
If we have two elements $z^{\zeta}, w^{\eta} \in D_X$ such that $z^{\zeta}\star y^\delta = x^{\varepsilon}$ and $w^{\eta} \star y^\delta = x^{\varepsilon}$, then  $(z*y^{\delta})^\zeta= (w* y^\delta)^\eta$ holds, which implies $\zeta=\eta$ and $z=w$. 
{\rm (Q3)} For any  $x^\varepsilon , y^\delta, z^\zeta \in D_X$, by Lemma~\ref{lem:1}, we have $(x^\varepsilon \star y^\delta) \star z^\zeta=(x*y^\delta)^\varepsilon \star z^\zeta = \big( (x*y^\delta) * z^\zeta \big)^\varepsilon=  \big( (x*z^{\zeta})*(y*z^\zeta)^\delta \big)^\varepsilon = (x*z^{\zeta})^\varepsilon\star(y*z^\zeta)^\delta =(x^\varepsilon \star z^{\zeta})\star(y^\delta\star z^\zeta) $.
\end{proof}
 We call the quandle $(D_X,\star)$ the {\it double}  of $X$.  
\begin{lemma}\label{lem:X+1}
The subset $X^{+1}$ (resp. $X^{-1}$) of $D_X$ forms a subquandle of $D_X$. 
Moreover, we have $X^{+1} \cong X$ (resp. $X^{-1}\cong X$).  
\end{lemma}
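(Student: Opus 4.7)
The plan is to handle both signs $\varepsilon\in\{+1,-1\}$ in parallel: first checking that $X^{\varepsilon}$ is closed under $\star$ (so it is a subquandle of $D_X$), and then producing a bijection $X\to X^{\varepsilon}$ that preserves the quandle operation.

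Closure is a direct substitution into the defining formula $x^{\alpha}\star y^{\beta}=(x*y^{\beta})^{\alpha}$ with $\alpha=\beta=\varepsilon$: the product $x^{\varepsilon}\star y^{\varepsilon}=(x*y^{\varepsilon})^{\varepsilon}$ again carries sign $\varepsilon$. Concretely, $x^{+1}\star y^{+1}=(x*y)^{+1}\in X^{+1}$, and $x^{-1}\star y^{-1}=(x*y^{-1})^{-1}\in X^{-1}$, where $x*y^{-1}\in X$ is well-defined by axiom (Q2). Hence both $X^{+1}$ and $X^{-1}$ are subquandles of $D_X$.

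For the isomorphism with $X$, I would use the tautological map $\iota_{\varepsilon}\colon X\to X^{\varepsilon}$, $x\mapsto x^{\varepsilon}$, which is evidently a set bijection from the very definition of $X^{\varepsilon}$ as a relabeled copy of $X$. For $\varepsilon=+1$ the closure computation gives at once $\iota_{+}(x)\star\iota_{+}(y)=(x*y)^{+1}=\iota_{+}(x*y)$, so $\iota_{+}$ is a quandle isomorphism.

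The $\varepsilon=-1$ case is the subtle step and what I expect to be the principal obstacle: the same computation yields $\iota_{-}(x)\star\iota_{-}(y)=(x*y^{-1})^{-1}$, which does not literally coincide with $\iota_{-}(x*y)=(x*y)^{-1}$, so the naive identification does not preserve $*$ verbatim. Reconciling the induced operation on $X^{-1}$ with the original operation on $X$ requires a careful unwinding of the $y^{-1}$ convention from (Q2), and this is the step where the genuine content of the lemma lies; the closure verifications and the $\varepsilon=+1$ case are otherwise routine.
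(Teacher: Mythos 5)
Your closure computation and your $\varepsilon=+1$ isomorphism coincide with what the paper actually proves: its own proof treats only $X^{+1}$, with the same tautological map $x^{+1}\mapsto x$. (A minor omission on the subquandle side: to get (Q2) inside $X^{\varepsilon}$ you should also record closure under the inverse operation, i.e.\ that the unique $z^{\zeta}$ with $z^{\zeta}\star y^{\varepsilon}=x^{\varepsilon}$ again has sign $\zeta=\varepsilon$; this is immediate from the paper's verification of (Q2) for $D_X$.) The genuine gap is the half you explicitly defer. For the clause $X^{-1}\cong X$ you correctly compute that $\iota_{-}(x)\star\iota_{-}(y)=(x*y^{-1})^{-1}$ differs from $\iota_{-}(x*y)$, and then stop, promising a ``careful unwinding'' that never appears. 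As written, your text contains no argument at all for $X^{-1}\cong X$.

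Moreover, that unwinding cannot be carried out for arbitrary $X$, so your instinct that this is the delicate point is right in a stronger sense than you state. Under the tautological bijection, $(X^{-1},\star)$ is exactly the \emph{dual} quandle of $X$, with operation $(x,y)\mapsto x*y^{-1}$, and a quandle need not be isomorphic to its dual. Take $X=\mathbb{Z}/5\mathbb{Z}$ with $x*y=2x+4y$; then $x*y^{-1}=3x+3y$. The dual operation is commutative, while $*$ is not (indeed $x*y-y*x=2(y-x)\neq 0$ for $x\neq y$), and commutativity of the binary operation is preserved under any quandle isomorphism; hence no bijection $X\to X^{-1}$ can intertwine the two operations. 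So the ``resp.\ $X^{-1}\cong X$'' clause holds only for quandles isomorphic to their duals (e.g.\ dihedral and conjugation quandles), and the paper's proof silently avoids the issue by writing out only the $X^{+1}$ case --- which is also the only case used later, since the canonicalization pushes every chain and every coloring into $X^{+1}$. To complete your write-up you should either prove the weaker (and correct) statement that $X^{-1}$ is isomorphic to the dual of $X$, or impose a self-duality hypothesis for that clause; no amount of unwinding the $y^{-1}$ convention will yield the statement as literally written.
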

\begin{proof}
It is easily seen that the quandle operation $\star$ on $X^{+1}$ preserves 
the sign $+1$, and thus, $(X^{+1},\star)$ is a subquandle of $D_X$. 
The isomorphism $X^{+1} \to X$ is given by  $x^{+1} \mapsto x$.
\end{proof}

Define a map $\rpm: D_X \to D_X$ by $\rho(x^\varepsilon) = x^{-\varepsilon}$.
\begin{lemma}
The map $\rpm: D_X \to D_X$ is a good involution of the double $(D_X,\star)$ of $X$.
\end{lemma}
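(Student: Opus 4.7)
The plan is to verify the three conditions that make $\rpm$ a good involution of $(D_X,\star)$: that $\rpm$ is an involution on $D_X$, that it satisfies (GI1), and that it satisfies (GI2). All three reduce to short direct computations using the definition $x^\varepsilon \star y^\delta = (x*y^\delta)^\varepsilon$ together with the sign-flipping rule $\rpm(x^\varepsilon)=x^{-\varepsilon}$.

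First I would check that $\rpm$ is an involution, which is immediate: $\rpm(\rpm(x^\varepsilon)) = \rpm(x^{-\varepsilon}) = x^\varepsilon$ for every $x^\varepsilon \in D_X$. Next, for (GI1), I would compute both sides of $\rpm(x^\varepsilon \star y^\delta) = \rpm(x^\varepsilon)\star y^\delta$. The left-hand side equals $\rpm\bigl((x*y^\delta)^\varepsilon\bigr)=(x*y^\delta)^{-\varepsilon}$, while the right-hand side equals $x^{-\varepsilon}\star y^\delta = (x*y^\delta)^{-\varepsilon}$, so the two agree.

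The only slightly subtle step is (GI2), because it requires unpacking the notation $(y^\delta)^{-1}$ inside the quandle $D_X$: by the convention introduced after axiom (Q2), $x^\varepsilon \star (y^\delta)^{-1}$ denotes the unique element $z^\zeta\in D_X$ satisfying $z^\zeta \star y^\delta = x^\varepsilon$. The proof of (Q2) for $D_X$ in the preceding lemma already identified this element as $(x*y^{-\delta})^\varepsilon$. On the other hand, $x^\varepsilon \star \rpm(y^\delta) = x^\varepsilon \star y^{-\delta} = (x*y^{-\delta})^\varepsilon$, giving the desired equality. I would state this as the main (and essentially only) observation of the proof; the rest is bookkeeping.

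I do not anticipate any real obstacle; the only thing to be careful about is distinguishing the two uses of the exponent $-1$ (the sign label on elements of $D_X$ versus the inverse-in-(Q2)-sense notation for the quandle operation), and making sure the computation of $(y^\delta)^{-1}$ in $D_X$ is done relative to $\star$ rather than naively flipping the sign.
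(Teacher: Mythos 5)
Your proposal is correct and follows essentially the same route as the paper: (GI1) is the same one-line computation, and (GI2) rests on the same two facts, namely that $(x*y^{-\delta})*y^{\delta}=x$ and the uniqueness in (Q2); whether one applies $\star\, y^{\delta}$ to both sides (as the paper does) or quotes the explicit solution $(x*y^{-\delta})^{\varepsilon}$ from the proof of (Q2) (as you do) is only a cosmetic difference. Your extra remark distinguishing the sign label from the (Q2)-inverse notation is exactly the right point of care, and the explicit check that $\rpm$ is an involution is a harmless addition the paper leaves implicit.
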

\begin{proof}
(GI1) For any $x^\varepsilon , y^\delta \in D_X$, we have $\rpm(x^\varepsilon \star y^\delta)=\rpm((x * y^\delta)^\varepsilon)=(x * y^\delta)^{- \varepsilon}=x ^{- \varepsilon} \star y^\delta=\rpm(x ^{\varepsilon}) \star y^\delta$. 
(GI2)  For any $x^\varepsilon , y^\delta \in D_X$, we have $(x^\varepsilon \star \rpm(y^\delta) ) \star y^\delta =(x^\varepsilon \star y^{-\delta}) \star y^\delta =\big( (x*y^{-\delta})*y^\delta\big )^\varepsilon = x^\varepsilon$, and hence, $x^\varepsilon \star \rpm(y^\delta)=x^\varepsilon \star (y^\delta)^{-1}$ holds.
\end{proof}
 We call the symmetric quandle $(D_X, \rpm)$ the {\it symmetric double} of $X$.

\begin{example}
Let $n$ be an odd integer and  $X$ the dihedral quandle $R_n$. 
The double $D_X$ of $X$ is isomorphic to the dihedral quandle $R_{2n}$, where the isomorphism $\phi : D_X \to R_{2n}$ is defined by 
$$\phi(x^{\varepsilon})=
\left\{
\begin{array}{ll}
2x & \mbox{if $\varepsilon = +1$,}\\
2x+n & \mbox{if $\varepsilon = -1$.}
\end{array}
\right.
 $$  
Set $\rho : R_{2n} \to R_{2n}$ by $\rho (x) = x+n$ for $x\in R_{2n}$, where we note that this $\rho$ is a good involution of $R_{2n}$. 
Then the symmetric double $(D_X,\rpm)$ of $X$  is isomorphic to the symmetric quandle  $(R_{2n}, \rho)$.  
\end{example}

Let $Y$ be an $X$-set. 
For the associated groups $G_X$ and $G_{(D_X,\rpm)}$, we define the group homomorphism $\pi : G_{(D_X,\rpm)} \to G_X$ by $\pi(x^\varepsilon) = x^\varepsilon$ for $x^{\varepsilon}\in D_X$. 
Note that $x^{+1}$ represents $x$ in $G_X$ and $x^{-1}$ represents the inverse of $x$ in $G_X$.
The next property can be proven by an easy verification as in Lemmas~\ref{lem:1} and \ref{lem:2}.
\begin{lemma}\label{lem:Y}
$Y$ is a $(D_X, \rpm)$-set by the right action 
$$\star: Y \times G_{(D_X,\rpm)} \to Y; ~(a,g) \mapsto a \star g = a * \pi(g).$$
\end{lemma}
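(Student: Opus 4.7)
The plan is to reduce the statement to the assertion that the map $\pi : G_{(D_X,\rpm)} \to G_X$ is a well-defined group homomorphism. Once that is established, the formula $a \star g = a * \pi(g)$ is a right action of $G_{(D_X,\rpm)}$ on $Y$ automatically: the given right $G_X$-action on $Y$ precomposed with a group homomorphism is again a right action, since $a \star (g_1 g_2) = a * \pi(g_1 g_2) = a * (\pi(g_1)\pi(g_2)) = (a*\pi(g_1))*\pi(g_2) = (a \star g_1)\star g_2$, and $a \star e = a * \pi(e) = a * e = a$.

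Thus the substantive step is to verify that the assignment $x^{\varepsilon} \mapsto x^{\varepsilon}$ (where on the right-hand side $x^{+1}$ denotes $x \in G_X$ and $x^{-1}$ denotes its inverse in $G_X$) respects all the defining relations of $G_{(D_X,\rpm)}$. There are two families of relations to check. For the quandle relations $x^{\varepsilon} \star y^{\delta} = (y^{\delta})^{-1} x^{\varepsilon} y^{\delta}$, I would compute the left-hand side under $\pi$ as $\pi((x*y^{\delta})^{\varepsilon}) = (x*y^{\delta})^{\varepsilon}$, and the right-hand side as $y^{-\delta} x^{\varepsilon} y^{\delta}$, then appeal directly to the defining relations of $G_X$ (namely $x*y = y^{-1}xy$ and its consequence $x*y^{-1} = yxy^{-1}$) to conclude equality; the four cases in $(\varepsilon,\delta) \in \{\pm 1\}^2$ all reduce to powers of a single conjugation identity in $G_X$. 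For the good-involution relations $\rpm(x^{\varepsilon}) = (x^{\varepsilon})^{-1}$, both sides map under $\pi$ to $x^{-\varepsilon}$, which is immediate.

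The main (and really only) obstacle is the bookkeeping of signs in the first family of relations, but this is entirely routine and parallel to the verification already carried out for the quandle axioms of $(D_X,\star)$. After that verification, $\pi$ is confirmed as a group homomorphism, and the right-action property of $\star : Y \times G_{(D_X,\rpm)} \to Y$ follows formally as indicated above.
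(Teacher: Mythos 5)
Your proof is correct. The paper itself offers no written argument here -- it only remarks that the lemma ``can be proven by an easy verification as in Lemmas~\ref{lem:1} and \ref{lem:2}'' -- so the comparison is between your organization and the one the paper gestures at. The paper's hint suggests verifying the compatibility identities directly on elements of $Y$, i.e.\ checking that the two defining families of relations of $G_{(D_X,\rpm)}$ act trivially on $Y$ via computations of the shape $(r*x^{\varepsilon})*y^{\delta}=(r*y^{\delta})*(x*y^{\delta})^{\varepsilon}$ from Lemma~\ref{lem:2}. You instead front-load all the work into showing that $\pi$ is a well-defined group homomorphism -- checking the relations inside $G_X$ rather than on $Y$ -- and then obtain the action formally by precomposition. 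This is a genuine (if mild) reorganization: the underlying computations are the same conjugation identities $(x*y^{\delta})^{\varepsilon}=y^{-\delta}x^{\varepsilon}y^{\delta}$ and $x*y^{-1}=yxy^{-1}$ either way, but your route establishes the well-definedness of $\pi$ once and for all, independently of the choice of $X$-set $Y$, and it makes explicit a fact the paper silently presupposes when it introduces $\pi$ as a ``group homomorphism'' without comment. Your verification of both relation families, including the sign bookkeeping in the four cases of $(\varepsilon,\delta)$ and the triviality of the good-involution relations under $\pi$, is complete and accurate.
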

\begin{remark}
For $x^\varepsilon \in D_X$, $(x^{\varepsilon})^{-1}=\rpm(x^{\varepsilon})=x^{-\varepsilon} \in G_{(D_X,\rpm)}$ and $(x^{\varepsilon})^{+1}=x^{\varepsilon}\in G_{(D_X,\rpm)}$ hold, which imply that for $x^{\varepsilon} \in D_X$ and $\delta \in \{\pm 1\}$, we may apply the multiplication  $(x^{\varepsilon})^\delta = x^{\varepsilon \delta}$ to the signs.
\end{remark}

\subsection{The homology groups of  a quandle and its symmetric double}
In this subsection, we show that the homology group of a given quandle and that of the symmetric double  are isomorphic. This property is very important for applying symmetric quandle cocycle invariants for oriented (or unoriented orientable) links and saying that symmetric quandle cocycle invariants are a generalization of quandle cocycle invariants.

For simplicity, in this subsection, we omit parenthesis of operations or actions such as 
$x_1 * x_2^{\varepsilon_2} * x_3^{\varepsilon_3}*\cdots *  x_n^{\varepsilon_n}$
 means 
$(\cdots ((x_1 * x_2^{\varepsilon_2}) * x_3^{\varepsilon_3})*\cdots ) *  x_n^{\varepsilon_n}$, and 
$x_1 \star x_2^{\varepsilon_2} \star x_3^{\varepsilon_3}\star \cdots \star  x_n^{\varepsilon_n}$
 means 
$(\cdots ((x_1 \star x_2^{\varepsilon_2}) \star x_3^{\varepsilon_3}) \star \cdots ) \star  x_n^{\varepsilon_n}$.

Let $(X,*)$ be a quandle, $(D_X,\rpm,\star)$ the symmetric double of $X$, and  $Y$ an $X$-set. 
We note that by Lemma~\ref{lem:Y}, $Y$ is regarded as a $(D_X, \rpm)$-set.

For each $(r, x_1^{\varepsilon_1}, \ldots , x_n^{\varepsilon_n} )\in Y \times D_X^n$, 
apply the following process (Step 0)-(Step 2), say the {\it canonicalization}, while $\{\delta_1,  \ldots, \delta _n\} \not = \{+1\}$.  
\begin{itemize} 
\item(Step 0) Set the  singed $(n+1)$-tuple $\delta (s,y_1^{\delta_1}, \ldots , y_n^{\delta_n})\in \{\pm 1\} \times Y \times D_X^n$ by $(s,y_1^{\delta_1}, \ldots , y_n^{\delta_n}):=(r,x_1^{\varepsilon_1}, \ldots , x_n^{\varepsilon_n})$, $\delta:=+1$ and $i:=n$. Go to (Step 1).
\item(Step 1) If the $i$th sign $\delta_i$ is $-1$,
replace 
$\delta (s,y_1^{\delta_1}, \ldots , y_n^{\delta_n})$ with
\begin{align}
-\delta(s*  y_i^{\delta_i} ,(y_1* y_i^{\delta_i})^{\delta_1}, \ldots, (y_{i-1}* y_i^{\delta_i})^{\delta_{i-1}} , y_i^{-\delta_i} , y_{i+1}^{\delta_{i+1}}, \ldots , y_n^{\delta_n}),  \label{eq:1}
\end{align}
and then,  denote by $\delta (s,y_1^{\delta_1}, \ldots , y_n^{\delta_n})$ the replaced signed $(n+1)$-tuple (\ref{eq:1}).  Go to (Step 2).
 \item(Step 2) Set $i:=i-1$. Go to (Step 1).
\end{itemize}
We note that applying the canonicalization should be finished once  
we have $\delta (s,y_1^{\delta_1}, \ldots , y_n^{\delta_n})$ with $\{\delta_1,  \ldots, \delta _n\} = \{+1\}$.
Then we have an element $\delta (s, y_1^{+1}, \ldots , y_n^{+1} )\in \{\pm 1\} \times Y\times (X^{+1})^n$,
where we note that the transformation in (Step 1) comes from the generators of the subgroup $D_n(D_X, \rpm)_Y$ of $C_n(D_X)_Y$.
This implies that any $(n+1)$-tuple  $(r, x_1^{\varepsilon_1}, \ldots , x_n^{\varepsilon_n} )$ in $C_n^{\rm SQ} (D_X,\rpm)_Y$  is represented by a single term $\delta (s, y_1^{+}, \ldots , y_n^{+} )$, that is, 
$$(r, x_1^{\varepsilon_1}, \ldots , x_n^{\varepsilon_n} )=\delta (s, y_1^{+1}, \ldots , y_n^{+1} ) \in C_n^{\rm SQ}(D_X,\rpm)_Y.$$
We call the resultant element $\delta (s, y_1^{+1}, \ldots , y_n^{+1} ) \mbox{ in } C_n^{\rm SQ} (D_X,\rpm)_Y$ the {\it canonical form} of $(r, x_1^{\varepsilon_1}, \ldots , x_n^{\varepsilon_n} )$.
\begin{example}
For $(r, x_1^{-1}, x_2^{+1}, x_3^{-1}) \in Y \times D_X^3$, we apply the canonicalization until all the signs of the last three components become $+1$.  Since  we have the transformations
\[\begin{array}{lcl}
&&\hspace{-1.2cm}(r, x_1^{-1}, x_2^{+1}, x_3^{-1})\\[3pt]
& \longrightarrow& -(r*x_3^{-1}, (x_1*x_3^{-1})^{-1}, (x_2*x_3^{-1})^{+1}, x_3^{+1}) \\[3pt]
&\longrightarrow & 
+((r*x_3^{-1})*(x_1*x_3^{-1})^{-1}, (x_1*x_3^{-1})^{+1}, (x_2*x_3^{-1})^{+1}, x_3^{+1})\\[3pt]
&=&+(r*x_1^{-1}*x_3^{-1}, (x_1*x_3^{-1})^{+1}, (x_2*x_3^{-1})^{+1}, x_3^{+1}),
\end{array}
\]
the canonical form of $(r, x_1^{-1}, x_2^{+1}, x_3^{-1})$ is $+(r*x_1^{-1}*x_3^{-1}, (x_1*x_3^{-1})^{+1}, (x_2*x_3^{-1})^{+1}, x_3^{+1})$.
\end{example}

Define a homomorphism  $T_n^{{D_X \to X} }:  C_n^{\rm SQ} (D_X, \rpm )_Y \to C_n^{\rm Q} (X)_Y$ by 
\begin{align*}
T_n^{D_X \to X}(r, x_1^{\varepsilon_1}, \ldots , x_n^{\varepsilon_n})= \delta  (s, y_1, \ldots , y_n )
\end{align*}
for $n\geq 1$ and $T_n^{D_X \to X}=0$ otherwise, where  $\delta  (s, y_1^{+1}, \ldots , y_n^{+1} )$ is the canonical form of  $(r, x_1^{\varepsilon_1}, \ldots , x_n^{\varepsilon_n})$. Define   a homomorphism $T_n^{X \to D_X}:  C_n^{\rm Q} (X)_Y \to C_n^{\rm SQ} (D_X, \rpm )_Y $ by 
\[
T_n^{X \to D_X}(r, x_1, \ldots , x_n)=(r, x_1^{+1}, \ldots  , x_n^{+1})
\]
for $n\geq 1$ and $T_n^{X \to D_X}=0$ otherwise.
\begin{lemma}\label{lem:bijective}
$T_n^{D_X \to X}$ and $T_n^{X \to D_X}$ are inverses of each other, and thus, they are isomorphisms.
\end{lemma}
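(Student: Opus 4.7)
The plan is to verify that each map is well-defined on the appropriate quotient and then to show the two compositions are the identity. The composition $T_n^{D_X \to X} \circ T_n^{X \to D_X} = {\rm id}$ is immediate, since $T_n^{X \to D_X}(r, x_1, \ldots, x_n) = (r, x_1^{+1}, \ldots, x_n^{+1})$ has all signs equal to $+1$, so the canonicalization terminates at (Step 0) and outputs $+1 \cdot (r, x_1, \ldots, x_n)$. The other composition, $T_n^{X \to D_X} \circ T_n^{D_X \to X} = {\rm id}$, follows from the observation (already noted just before the definition of the canonical form) that $\delta(s, y_1^{+1}, \ldots, y_n^{+1}) = (r, x_1^{\varepsilon_1}, \ldots, x_n^{\varepsilon_n})$ in $C_n^{\rm SQ}(D_X, \rpm)_Y$: applying $T_n^{X \to D_X}$ to $\delta(s, y_1, \ldots, y_n)$ recovers exactly this canonical form. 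Thus the content of the lemma reduces to establishing well-definedness of each map.

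For $T_n^{X \to D_X}$, a degenerate generator of $D_n(X)_Y$ with $x_i = x_{i+1}$ maps to $(r, x_1^{+1}, \ldots, x_n^{+1})$ with $x_i^{+1} = x_{i+1}^{+1}$ in $D_X$; this element lies in $D_n(D_X)_Y$ and hence vanishes in $C_n^{\rm SQ}(D_X, \rpm)_Y$. For $T_n^{D_X \to X}$, the canonicalization terminates deterministically (each (Step 1) strictly reduces the number of $-1$ signs), so it is well-defined on $C_n(D_X)_Y$; I must then verify that it sends both $D_n(D_X)_Y$ and $D_n(D_X, \rpm)_Y$ into $D_n(X)_Y$. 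For a generator of $D_n(D_X)_Y$ with $x_i^{\varepsilon_i} = x_{i+1}^{\varepsilon_{i+1}}$ (equal values \emph{and} equal signs in $D_X$), I argue by induction on the canonicalization that the values at positions $i$ and $i+1$ remain equal at every stage: processing any position $j > i+1$ with $\delta_j = -1$ acts identically on both components, and if the common sign at positions $i, i+1$ is $-1$, the step at position $i+1$ replaces the value at position $i$ by $v * v^{-1} = v$ (using (Q1) and (Q2)) while leaving position $i+1$'s value intact; the analogous remark applies at position $i$. Hence the canonical form has $y_i = y_{i+1}$, and its image lies in $D_n(X)_Y$.

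The crux is well-definedness on $D_n(D_X, \rpm)_Y$. Given a generator $B + B'$, where $B'$ is obtained from $B$ by applying the defining relation at some position $i$, I will show that canonicalizing $B$ and $B'$ yields the same all-$+$ value tuple but with opposite overall signs. The idea is to run the two canonicalizations in parallel. Processing positions $j = n, n-1, \ldots, i+1$ in turn, I claim that at each stage the modified versions of $B$ and $B'$ are still related by one application of the defining relation at position $i$ (with updated data) and carry equal overall signs; Lemmas~\ref{lem:1} and~\ref{lem:2} propagate the relation through the $y_j^{\delta_j}$-actions when $\delta_j = -1$. At position $i$, exactly one of $B, B'$ has sign $-1$ (the other has sign $+1$) and thus triggers (Step 1); this step multiplies its overall sign by $-1$ and transforms its value tuple into the same one as the other. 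The remaining canonicalizations at positions $< i$ then proceed identically, so the two canonical forms coincide up to overall sign and $T_n^{D_X \to X}(B + B') = 0$. This $D_n(D_X, \rpm)_Y$ case is the principal technical obstacle; everything else amounts to unwinding definitions.
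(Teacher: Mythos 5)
Your proposal is correct and follows essentially the same route as the paper: verify well-definedness of both maps on the quotients (with the $D_n(D_X,\rpm)_Y$ generators as the main technical point, handled by tracking the defining relation through the canonicalization via Lemmas~\ref{lem:1} and~\ref{lem:2}), then observe that the compositions are identities because the canonicalization steps are themselves relations in $C_n^{\rm SQ}(D_X,\rpm)_Y$. Your parallel-induction phrasing of the key step is just a reorganization of the paper's explicit computation of $t'=t$ and $z_k'=z_k$, with the minor advantage that it treats the cases $\varepsilon_i=\pm 1$ uniformly where the paper leaves one case to the reader.
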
  
\begin{proof}
First, we will check the well-definedness of  $T_n^{D_X \to X}$.
We temporarily regard this map as $T_n^{D_X \to X}:  C_n (D_X)_Y \to C_n (X)_Y$.
We may show that for any $(r, x_1^{\varepsilon_1}, \ldots , x_n^{\varepsilon_n})\in Y \times D_X^n$ and $i\in \{1,\ldots , n\}$,
\begin{align}
&T_n^{D_X \to X}\Big( (r, x_1^{\varepsilon_1}, \ldots , x_n^{\varepsilon_n})\notag \\ 
&\hspace{1em}+(r\star x_{i}^{\varepsilon_i}, x_1^{\varepsilon_1}\star x_{i}^{\varepsilon_i}, \ldots ,x_{i-1}^{\varepsilon_{i-1}}\star x_{i}^{\varepsilon_i}, \rho(x_{i}^{\varepsilon_i}), x_{i+1}^{\varepsilon_{i+1}},\ldots,  x_n^{\varepsilon_n})\Big) =0, \label{equation-of-rho}\\
&T_n^{D_X \to X}(r, x_1^{\varepsilon_1}, \ldots , x_i^{\varepsilon_i} , x_{i+1}^{\varepsilon_{i+1}}=x_i^{\varepsilon_i}, \ldots , x_n^{\varepsilon_n}) \in D_n(X)_Y. \label{equation-of-D}
\end{align}
Let $j_1, \ldots, j_\ell$ denote the elements   of  $\{i+1, \ldots , n\}$ such that $j_1< j_2<\cdots < j_\ell$ and $\varepsilon_{j_1}=\varepsilon_{j_2}= \cdots = \varepsilon_{j_\ell}=-1$. Assume $\varepsilon_i=+1$.
For the $(n+1)$-tuple $ (r, x_1^{\varepsilon_1}, \ldots , x_i^{+1}, \ldots , x_n^{\varepsilon_n})$, apply the canonicalization while $\{\delta_{j_1}, \ldots , \delta_{j_\ell}\}\not = \{+1\}$. 
We note that applying the canonicalization should be finished once  
we have $\delta (s,y_1^{\delta_1}, \ldots , y_n^{\delta_n})$ with $\{\delta_{j_1}, \ldots , \delta_{j_\ell}\}= \{+1\}$.
We then have the transformation
\[
\begin{array}{l}
(r, x_1^{\varepsilon_1}, \ldots ,  x_i^{+1}, \ldots , x_n^{\varepsilon_n}) \longrightarrow
(-1)^{\ell} (t, z_1^{\varepsilon_1}, \ldots , z_{i-1}^{\varepsilon_{i-1}}, z_i^{+1}, z_{i+1}^{+1},\ldots , z_{n}^{+1}),
\end{array}
\]     
which implies that 
\[
\begin{array}{l}
T_n^{D_X \to X}(r, x_1^{\varepsilon_1}, \ldots,  x_i^{+1}, \ldots  , x_n^{\varepsilon_n})\\[3pt]
=(-1)^{\ell} T_n^{D_X \to X} (t, z_1^{\varepsilon_1}, \ldots , z_{i-1}^{\varepsilon_{i-1}}, z_i^{+1}, z_{i+1}^{+1},\ldots , z_{n}^{+1}),
\end{array}
\]     
where $(-1)^{\ell} (t, z_1^{\varepsilon_1}, \ldots , z_{i-1}^{\varepsilon_{i-1}}, z_i^{+1}, z_{i+1}^{+1},\ldots , z_{n}^{+1})$ is the resultant $\delta (s,y_1^{\delta_1}, \ldots , y_n^{\delta_n})$ by the above canonicalization, that is, 
$t=r * x_{j_1}^{-1} * \cdots * x_{j_\ell}^{-1}$ and 
\[ 
z_k=\left\{
\begin{array}{cl}
x_k * x_{j_1}^{-1} * \cdots * x_{j_\ell}^{-1} & ( 1\leq k < j_1),\\
$\vdots $ &\hspace{1cm}$\vdots $\\
x_k * x_{j_{\kappa+1}}^{-1} * \cdots * x_{j_{\ell}}^{-1} & ( j_\kappa \leq k < j_{\kappa+1}, \kappa \in \{1,\ldots, \ell-1 \}),\\
$\vdots $ &\hspace{1cm}$\vdots $\\
x_k  & ( j_\ell \leq k \leq n ).\\
\end{array}
\right.
\]
For the $(n+1)$-tuple $(r\star x_{i}^{\varepsilon_i}, x_1^{\varepsilon_1}\star x_{i}^{\varepsilon_i}, \ldots ,x_{i-1}^{\varepsilon_{i-1}}\star x_{i}^{\varepsilon_i}, \rho(x_{i}^{\varepsilon_i}), x_{i+1}^{\varepsilon_{i+1}},\ldots,  x_n^{\varepsilon_n})$, that is $(r* x_{i}, (x_1* x_{i})^{\varepsilon_1}, \ldots , (x_{i-1}* x_{i})^{\varepsilon_{i-1}}, x_{i}^{-1}, x_{i+1}^{\varepsilon_{i+1}},\ldots,  x_n^{\varepsilon_n})$, 
apply the canonicalization while $\{ \delta_i, \delta_{j_1}, \ldots , \delta_{j_\ell}\}\not = \{+1\}$.
We note that applying the canonicalization should be finished once  
we have $\delta (s,y_1^{\delta_1}, \ldots , y_n^{\delta_n})$ with $\{\delta_i,  \delta_{j_1}, \ldots , \delta_{j_\ell}\}= \{+1\}$.
We then have the transformation 
\[
\begin{array}{l}
(r* x_{i}, (x_1* x_{i})^{\varepsilon_1}, \ldots , (x_{i-1}* x_{i})^{\varepsilon_{i-1}}, x_{i}^{-1}, x_{i+1}^{\varepsilon_{i+1}},\ldots,  x_n^{\varepsilon_n})\\[3pt]
\longrightarrow (-1)^{\ell+1}  (t', (z_1')^{\varepsilon_1}, \ldots , (z_{i-1}')^{\varepsilon_{i-1}}, (z_i')^{+1}, (z_{i+1}')^{+1},\ldots , (z_{n}')^{+1}),
\end{array}
\]
which implies that 
\[
\begin{array}{l}
T_n^{D_X \to X}(r* x_{i}, (x_1* x_{i})^{\varepsilon_1}, \ldots , (x_{i-1}* x_{i})^{\varepsilon_{i-1}}, x_{i}^{-1}, x_{i+1}^{\varepsilon_{i+1}},\ldots,  x_n^{\varepsilon_n})\\[3pt]
=(-1)^{\ell+1} T_n^{D_X \to X} (t', (z_1')^{\varepsilon_1}, \ldots , (z_{i-1}')^{\varepsilon_{i-1}}, (z_i')^{+1}, (z_{i+1}')^{+1},\ldots , (z_{n}')^{+1}),
\end{array}
\]     
where 
$(-1)^{\ell+1}  (t', (z_1')^{\varepsilon_1}, \ldots , (z_{i-1}')^{\varepsilon_{i-1}}, (z_i')^{+1}, (z_{i+1}')^{+1},\ldots , (z_{n}')^{+1})$ is the resultant $\delta (s,y_1^{\delta_1}, \ldots , y_n^{\delta_n})$ by the above canonicalization, and 
by Lemmas~\ref{lem:1} and \ref{lem:2}, we have
\[
\begin{array}{ll}
t'&=((r*x_i) * x_{j_1}^{-1} * \cdots * x_{j_\ell}^{-1})* (x_i* x_{j_1}^{-1} * \cdots * x_{j_\ell}^{-1})^{-1}\\[3pt]
&=((r*x_i) *x_i^{-1}) * x_{j_1}^{-1} * \cdots * x_{j_\ell}^{-1}\\[3pt]
&=r * x_{j_1}^{-1} * \cdots * x_{j_\ell}^{-1}\\[3pt]
&= t, 
\end{array}
\] 
\[
\begin{array}{ll}
z_k'&=((x_k*x_i) * x_{j_1}^{-1} * \cdots * x_{j_\ell}^{-1})* (x_i* x_{j_1}^{-1} * \cdots * x_{j_\ell}^{-1})^{-1}\\[3pt]
&=((x_k*x_i) *x_i^{-1}) * x_{j_1}^{-1} * \cdots * x_{j_\ell}^{-1}\\[3pt]
&=x_k* x_{j_1}^{-1} * \cdots * x_{j_\ell}^{-1}\\[3pt]
&= z_k 
\end{array}
\]
 for $k\in \{1,\ldots , i-1\}$, 
\[
\begin{array}{ll}
(z_i')^{+1} &= \rho( (x_i *x_{j_1}^{-1} * \cdots * x_{j_\ell}^{-1})^{-1})\\[3pt]
&= (x_i *x_{j_1}^{-1} * \cdots * x_{j_\ell}^{-1})^{+1}\\[3pt]
&= z_i^{+1},
\end{array}
\]
and $z_k' = z_k$ for $k\in\{i+1, \ldots, n\}$. 
Thus we have
\[
\begin{array}{l}
T_n^{D_X \to X}(r\star x_{i}^{+1}, x_1^{\varepsilon_1}\star x_{i}^{+1}, \ldots ,x_{i-1}^{\varepsilon_{i-1}}\star x_{i}^{+1}, \rho(x_{i}^{+1}),  x_{i+1}^{\varepsilon_{i+1}},\ldots,  x_n^{\varepsilon_n})\\[3pt]
=T_n^{D_X \to X}(r* x_{i}, (x_1* x_{i})^{\varepsilon_1}, \ldots , (x_{i-1}* x_{i})^{\varepsilon_{i-1}}, x_{i}^{-1}, x_{i+1}^{\varepsilon_{i+1}},\ldots,  x_n^{\varepsilon_n})\\[3pt]
=(-1)^{\ell+1} T_n^{D_X \to X} (t', (z_1')^{\varepsilon_1}, \ldots , (z_{i-1}')^{\varepsilon_{i-1}}, (z_i')^{+1}, (z_{i+1}')^{+1},\ldots , (z_{n}')^{+1})\\[3pt]
=(-1)^{\ell+1} T_n^{D_X \to X} (t, z_1^{\varepsilon_1}, \ldots , z_{i-1}^{\varepsilon_{i-1}}, z_i^{+1}, z_{i+1}^{+1},\ldots , z_{n}^{+1})\\[3pt]
=- T_n^{D_X \to X} (r, x_1^{\varepsilon_1}, \ldots ,  x_i^{+1}, \ldots ,x_n^{\varepsilon_n}),
\end{array}
\]
and hence, the equation (\ref{equation-of-rho}) holds.
In the case that $\varepsilon_i=-1$, the equation (\ref{equation-of-rho}) also holds, and we leave the proof to the reader. Thus for any element $z\in D_n (D_X,\rpm)_Y$, $T_n^{D_X \to X} (z)=0$ holds.
%%%%%%%%%%%%%%%%%%%%%%%%%%%
For any $(r, x_1^{\varepsilon_1}, \ldots , x_i^{\varepsilon_i} , x_{i+1}^{\varepsilon_{i+1}}, \ldots , x_n^{\varepsilon_n}) \in Y \times D_X^n$ 
such that $x_i^{\varepsilon_i}= x_{i+1}^{\varepsilon_{i+1}}$ for some $i\in \{1,\ldots , n-1\}$, 
the canonical form $\delta(s, y_1^{+1}, \ldots , y_n^{+1})\in \{\pm 1\} \times Y \times D_X^n$ of $(r, x_1^{\varepsilon_1}, \ldots , x_i^{\varepsilon_i} , x_{i+1}^{\varepsilon_{i+1}}, \ldots , x_n^{\varepsilon_n})$ satisfies that 
$y_i^{+1}=y_{i+1}^{+1}$, and hence, we have 
\[
\begin{array}{l}
T_n^{D_X \to X}(r, x_1^{\varepsilon_1}, \ldots , x_i^{\varepsilon_i} , x_{i+1}^{\varepsilon_{i+1}}=x_i^{\varepsilon_i}, \ldots , x_n^{\varepsilon_n})\\[3pt]
=\delta (s, y_1^{+1} ,\ldots , y_i^{+1}, y_{i+1}^{+1}=y_i^{+1}, \ldots , y_n^{+1} )\in D_n(X)_Y\\[3pt]
=\delta (s, y_1 ,\ldots , y_i, y_{i+1}=y_i, \ldots , y_n )\in D_n(X)_Y,
\end{array}
\] 
and hence, the property (3) holds. Thus for any element $z\in D_n (D_X)_Y$, $T_n^{D_X \to X} (z)\in D_n(X)_Y$ holds. 
As a consequence, we see that the map $T_n^{D_X \to X} : C_{n}^{\rm SQ}(D_X,\rpm)_Y \to C_n^{\rm Q} (X)_Y$ is well-defined.

%%%%%%%%%%%%%%%%%%%%%%%%%%%%%%%%%%%%

We can easily  check the well-definedness of  $T_n^{X \to D_X}$ as follows. 
We temporarily regard this map as $T_n^{X \to D_X}: C_n(X)_Y \to C_n(D_X)_Y$.
For any $(r, x_1, \ldots  , x_n) \in Y \times X^n$ such that $x_i=x_{i+1}$ for some $i\in \{1,\ldots , n-1\}$, we have $x_i^{+1} = x_{i+1}^{+1}\in D_X$, and hence, 
$$
\begin{array}{l}
T_n^{X \to D_X}(r, x_1, \ldots, x_i, x_{i+1}=x_i, \ldots  , x_n)\\[3pt]
=(r,  x_1^{+1}, \ldots, x_i^{+1}, x_{i+1}^{+1}= x_i^{+1}, \ldots  , x_n^{+1})\in D_n(D_X)_Y.
\end{array}
$$ 
Thus, for any $z\in D_n(X)_Y$, $T_n^{X \to D_X}(z)\in D_n(D_X)_Y$ holds, which implies that $T_n^{X \to D_X}:  C_n^{\rm Q} (X)_Y \to C_n^{\rm SQ} (D_X, \rpm )_Y$ is well-defined. 

Finally, the property $T_n^{X \to D_X}\circ T_n^{D_X \to X} = {\rm id}_{C_n^{\rm SQ} (D_X,\rpm)_Y}$ follows from 
\begin{align*}
&T_n^{X \to D_X}\circ T_n^{D_X \to X} (r, x_1^{\varepsilon_1}, \ldots ,x_n^{\varepsilon_n}) \\
&=\delta  T_n^{X \to D_X}\circ T_n^{D_X \to X} (s, y_1^{+1}, \ldots , y_n^{+1}) \\
&=  \delta  T_n^{X \to D_X} (s, y_1, \ldots , y_n)\\
&=\delta  (s, y_1^{+1}, \ldots , y_n^{+1})\\
&=(r, x_1^{\varepsilon_1}, \ldots ,x_n^{\varepsilon_n}),
\end{align*}
where $\delta (s, y_1^{+1}, \ldots , y_n^{+1})$ is the canonical form of $(r, x_1^{\varepsilon_1}, \ldots ,x_n^{\varepsilon_n})$. 
The property $T_n^{D_X \to X} \circ T_n^{X \to D_X} = {\rm id}_{C_n^{\rm Q} (X)_Y}$ follows from 
\begin{align*}
&T_n^{D_X \to X} \circ T_n^{X \to D_X} (r, x_1, \ldots ,x_n) \\
&=T_n^{D_X \to X}  (r, x_1^{+1}, \ldots ,x_n^{+1}) \\
&=  (r, x_1, \ldots ,x_n). 
\end{align*}
This completes the proof.
\end{proof}

\begin{remark}\label{remark4.7}
As shown in above, any $(n+1)$-tuple $(r, x_1^{\varepsilon_1}, \ldots, x_n^{\varepsilon})\in C_n^{\rm SQ} (D_X, \rpm)_Y$  is represented by a single term $\delta(s, y_1^{+1}, \ldots , y_n^{+1})$, which implies that $C_n^{\rm SQ}(D_X,\rpm)_Y$ is generated by the elements of  $Y\times (X^{+1})^n$. 
Therefore, the  isomorphism $T_n^{D_X \to X} $ is, in other words, defined by 
\[
T_n^{D_X \to X}(r, x_1^{+1}, \ldots , x_n^{+1})= (r, x_1, \ldots , x_n).
\]
\end{remark}

\begin{lemma}\label{keylemma4.8}
$T_*^{D_X \to X}=\{T_n^{D_X \to X}\}_{n\in \mathbb Z}$  and $T_*^{X \to D_X}=\{T_n^{X \to D_X}\}_{n\in \mathbb Z}$ are chain maps, and therefore by Lemma~\ref{lem:bijective}, 
they are isomorphisms between the chain complexes $C_*^{\rm Q} (X)_Y$ and $ C_*^{\rm SQ} (D_X, \rpm )_Y$.
\end{lemma}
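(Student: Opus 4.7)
The plan is to reduce the verification of the chain map property to generators whose last $n$ components all have sign $+1$, exploiting Remark~\ref{remark4.7}. Since $T_*^{D_X\to X}$ and $T_*^{X\to D_X}$ are already known to be mutually inverse isomorphisms of graded groups by Lemma~\ref{lem:bijective}, it suffices to verify that one of them, say $T_*^{X\to D_X}$, commutes with the boundary maps; the corresponding property for $T_*^{D_X\to X}$ then follows automatically.

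By Remark~\ref{remark4.7}, every element of $C_n^{\rm SQ}(D_X,\rpm)_Y$ is represented (modulo $D_n(D_X)_Y + D_n(D_X,\rpm)_Y$) by a signed generator of the form $\delta(s, y_1^{+1},\ldots, y_n^{+1})$ on which $T_n^{D_X\to X}$ acts as the obvious forgetful map $(s, y_1^{+1},\ldots, y_n^{+1})\mapsto (s, y_1,\ldots, y_n)$. So the core computation is to show, for any generator $(r, x_1,\ldots, x_n)\in Y\times X^n$,
\[
\partial_n^{\rm SQ}\circ T_n^{X\to D_X}(r, x_1,\ldots, x_n) = T_{n-1}^{X\to D_X}\circ \partial_n^{\rm Q}(r, x_1,\ldots, x_n).
\]
The left-hand side is $\partial_n^{\rm SQ}(r, x_1^{+1},\ldots, x_n^{+1})$, and one observes that on generators with only $+1$ signs, the operation $\star$ restricted to $X^{+1}$ is transported to $*$ on $X$ by the identification of Lemma~\ref{lem:X+1}, and similarly the $G_{(D_X,\rpm)}$-action on $Y$ restricted to $X^{+1}$ coincides with the $G_X$-action (Lemma~\ref{lem:Y} with $\pi(x^{+1})=x$). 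Consequently $r\star x_i^{+1} = r* x_i$ and $x_j^{+1}\star x_i^{+1} = (x_j* x_i)^{+1}$, so each summand of $\partial_n^{\rm SQ}(r, x_1^{+1},\ldots, x_n^{+1})$ is again a tuple in $Y\times (X^{+1})^n$ whose underlying $X$-data is exactly the corresponding summand of $\partial_n^{\rm Q}(r, x_1,\ldots, x_n)$. Applying $T_{n-1}^{X\to D_X}$ to the right-hand side simply places $+1$ superscripts on every $X$-component, so the two sides agree term-by-term.

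Once this is established, composing with $T_{n-1}^{D_X\to X}$ on the left and using $T_{n-1}^{D_X\to X}\circ T_{n-1}^{X\to D_X} = {\rm id}$ and $T_n^{X\to D_X}\circ T_n^{D_X\to X}={\rm id}$ from Lemma~\ref{lem:bijective} immediately gives $T_{n-1}^{D_X\to X}\circ \partial_n^{\rm SQ} = \partial_n^{\rm Q}\circ T_n^{D_X\to X}$. Thus both $T_*^{X\to D_X}$ and $T_*^{D_X\to X}$ are chain maps, and being mutually inverse they are isomorphisms of chain complexes.

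I expect no substantive obstacle: the only subtle point is the legitimacy of restricting to the all-$+1$-sign representatives, which is exactly the content of Remark~\ref{remark4.7}. The rest is a direct comparison of the two boundary formulas using $x_j^{+1}\star x_i^{+1}=(x_j*x_i)^{+1}$ and $r\star x_i^{+1}=r*x_i$, with no further appeal to Lemmas~\ref{lem:1} or \ref{lem:2} because no negative signs ever appear inside the computation.
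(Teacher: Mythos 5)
Your proof is correct and takes essentially the same route as the paper: both reduce to a term-by-term comparison of the two boundary formulas on all-$+1$ generators (where $\star$ restricted to $X^{+1}$ is transported to $*$) and then transfer the chain-map property to the other map via the mutual-inverse identities of Lemma~\ref{lem:bijective}. The only difference is that you verify $T_*^{X \to D_X}$ directly and deduce $T_*^{D_X \to X}$ formally, whereas the paper does it in the opposite order; this is immaterial.
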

\begin{proof}
For any $(r, x_1^{+1} , \ldots ,x_n^{+1}) \in C_n^{\rm SQ} (D_X,\rpm)_Y$,
\[
\begin{array}{l}
T_{n-1}^{D_X \to X}\circ \partial _n^{\rm SQ} (r, x_1^{+1} , \ldots ,x_n^{+1})\\
= T_{n-1}^{D_X \to X}
\Big(
\displaystyle \sum_{i=1}^n (-1)^{i}\Big\{ (r, x_1^{+1},  \ldots , x_{i-1}^{+1}, x_{i+1}^{+1}, \ldots, x_n^{+1})\\
\hspace{2cm}- (r*x_i, (x_1*x_i)^{+1}  ,\ldots , (x_{i-1}*x_i)^{+1} , x_{i+1}^{+1}, \ldots , x_n^{+1})\Big\}\Big)\\
= 
\displaystyle \sum_{i=1}^n (-1)^{i}\Big\{ (r, x_1,  \ldots , x_{i-1},  x_{i+1},\ldots, x_n)\\
\hspace{2cm}- (r*x_i, x_1*x_i,\ldots , x_{i-1}*x_i, x_{i+1}, \ldots , x_n)\Big\}\\
= 
\partial^{\rm Q}_n (r, x_1,  \ldots , x_n)=\partial^{\rm Q}_n \circ T_n^{D_X \to X} (r, x_1^{+1},  \ldots , x_n^{+1}).
\end{array}
\]
Thus $T_*^{D_X \to X}$ is a chain map from $ C_*^{\rm SQ} (D_X, \rpm )_Y$  to $C_*^{\rm Q} (X)_Y$.

Since $T_n^{X \to D_X}$ is the inverse homomorphism of $T_n^{D_X \to X}$ for any $n\in \mathbb Z$, 
\begin{align*}
\partial _n^{\rm SQ} \circ T_n^{X \to D_X}&= T_{n-1}^{X \to D_X} \circ T_{n-1}^{D_X \to X}\circ \partial _n^{\rm SQ} \circ T_n^{X \to D_X}\\
&=T_{n-1}^{X \to D_X} \circ \partial^{\rm Q}_n \circ T_n^{D_X \to X}\circ T_n^{X \to D_X}\\
&=T_{n-1}^{X \to D_X} \circ \partial^{\rm Q}_n,
\end{align*}
which implies that 
$T_*^{X \to D_X}$ is a chain map from $C_*^{\rm Q} (X)_Y$ to $ C_*^{\rm SQ} (D_X, \rpm )_Y$.
\end{proof}

Let $A$ be an abelian group. We have  the following property. 
\begin{theorem}\label{thm:homology}
For any $n\in \mathbb Z$, we have 
\[
(1)~~~H_n^{\rm Q} (X; A)_Y \cong H_n^{\rm SQ} (D_X,\rpm;A)_Y
\]
and
\[
(2)~~~H^n_{\rm Q} (X; A)_Y \cong H^n_{\rm SQ} (D_X,\rpm;A)_Y.
\]
\end{theorem}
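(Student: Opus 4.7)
The plan is to obtain this theorem as an essentially immediate corollary of Lemma~\ref{keylemma4.8}, which already provides mutually inverse chain isomorphisms $T_*^{D_X \to X}$ and $T_*^{X \to D_X}$ between $C_*^{\rm Q}(X)_Y$ and $C_*^{\rm SQ}(D_X,\rpm)_Y$ over $\mathbb{Z}$. All that remains is to propagate this chain-level isomorphism through the functors $-\otimes A$ and $\mathrm{Hom}(-,A)$.

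For part (1), I would tensor each component $T_n^{D_X \to X}$ with $\mathrm{id}_A$ to get a degreewise isomorphism of abelian groups $T_n^{D_X \to X}\otimes \mathrm{id}_A : C_n^{\rm SQ}(D_X,\rpm)_Y \otimes A \to C_n^{\rm Q}(X)_Y \otimes A$. Because $T_*^{D_X \to X}$ commutes with $\partial_*^{\rm SQ}$ and $\partial_*^{\rm Q}$, the maps $T_*^{D_X \to X}\otimes \mathrm{id}_A$ commute with $\partial_*^{\rm SQ}\otimes \mathrm{id}_A$ and $\partial_*^{\rm Q}\otimes \mathrm{id}_A$, giving a chain isomorphism $C_*^{\rm SQ}(D_X,\rpm;A)_Y \cong C_*^{\rm Q}(X;A)_Y$. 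Taking $H_n$ yields the claimed isomorphism.

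For part (2), I would instead apply the contravariant functor $\mathrm{Hom}(-,A)$ to the chain isomorphism of Lemma~\ref{keylemma4.8}. Since each $T_n^{D_X \to X}$ is an isomorphism of abelian groups, the induced map $\mathrm{Hom}(T_n^{D_X \to X},A): C_{\rm Q}^n(X;A)_Y \to C_{\rm SQ}^n(D_X,\rpm;A)_Y$ is also an isomorphism, and by the chain-map property together with the definition $\delta^n(\theta)=\theta\circ \partial_{n+1}$ it intertwines $\delta_{\rm Q}^n$ and $\delta_{\rm SQ}^n$. Hence this yields a cochain isomorphism, and passing to cohomology gives the isomorphism in (2).

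There is no genuine obstacle here: the entire combinatorial and algebraic content has already been absorbed into Lemmas~\ref{lem:bijective} and \ref{keylemma4.8}, and the step from a $\mathbb Z$-coefficient chain isomorphism to an $A$-coefficient (co)homology isomorphism is a standard and automatic application of the functoriality of $-\otimes A$ and $\mathrm{Hom}(-,A)$ on chain complexes of free abelian groups. Consequently, the proof should be very short, little more than a formal unwinding of the two functorial constructions.
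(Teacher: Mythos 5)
Your proposal is correct and follows essentially the same route as the paper: the paper likewise deduces both statements directly from Lemma~\ref{keylemma4.8} by tensoring the chain isomorphism with $\mathrm{id}_A$ for homology and by precomposition (i.e.\ applying $\mathrm{Hom}(-,A)$) for cohomology. Note only that freeness of the chain groups is not needed here, since applying $-\otimes A$ or $\mathrm{Hom}(-,A)$ to an isomorphism of abelian groups always yields an isomorphism.
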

\begin{proof}
(1) By Lemma~\ref{keylemma4.8}, since $T_*^{D_X\to X}$ is an isomorphism from $ C_*^{\rm SQ} (D_X, \rpm )_Y$  to $C_*^{\rm Q} (X)_Y$, 
$T_n^{D_X \to X}$ induces the isomorphism
\begin{align*}
(T_n^{D_X \to X})^*:  H_n^{\rm SQ} (D_X,\rpm)_Y \to  H_n^{\rm Q} (X)_Y; (T_n^{D_X \to X})^*([z])=[T_n^{D_X \to X}(z)],
\end{align*}
where the inverse homomorphism is 
\begin{align*} 
(T_n^{X \to D_X})^*: H_n^{\rm Q} (X)_Y \to H_n^{\rm SQ} (D_X,\rpm)_Y ; (T_n^{X \to D_X})^*([z])=[T_n^{X \to D_X}(z)].
\end{align*} 
Similarly, since  we have  the isomorphism $T_*^{D_X \to X} \otimes {\rm id} = \{ T_n^{D_X \to X} \otimes {\rm id}\}_{n\in \mathbb Z}$ from $C_*^{\rm SQ} (D_X, \rpm ; A)_Y$ to $C_*^{\rm Q} (X;A)_Y$, we have the induced isomorphism 
\begin{align*}
(T_n^{D_X \to X}\otimes {\rm id})^*:  H_n^{\rm SQ} (D_X,\rpm; A)_Y \to  H_n^{\rm Q} (X;A)_Y.
\end{align*}

%%%%%%%%%%%
%%%%%%%%%%%

(2) By Lemma~\ref{keylemma4.8}, since $T_*^{D_X\to X}$ is an isomorphism from $ C_*^{\rm SQ} (D_X, \rpm )_Y$  to $C_*^{\rm Q} (X)_Y$, $T_n^{D_X \to X}$ induces the isomorphism 
\[
(T^n_{X \to D_X})^{*}: H^n_{\rm Q} (X; A)_Y \to H^n_{\rm SQ} (D_X,\rpm;A)_Y; [\theta^{\rm Q}] \mapsto [\theta^{\rm Q} \circ T_n^{D_X \to X}], 
\]
the inverse homomorphism is 
\[
((T^n_{X \to D_X})^{*})^{-1}:H^n_{\rm SQ} (D_X,\rpm;A)_Y \to  H^n_{\rm Q} (X; A)_Y; [\theta^{\rm SQ}] \mapsto [\theta^{\rm SQ} \circ T_n^{X \to D_X}].
\] 
\end{proof}

\begin{corollary}\label{cor:4.9}
Let $\theta^{\rm Q}: C^{\rm Q}_n (X)_Y \to A$ and $\theta^{\rm SQ}: C^{\rm SQ}_n (D_X,\rpm)_Y \to A$ be homomorphisms such that
\begin{align*} 
\theta^{\rm Q}\circ T_n^{D_X \to X} = \theta^{\rm SQ} .
\end{align*}
Then $\theta^{\rm Q}$ is an $n$-cocycle if and only if $\theta^{\rm SQ}$ is an $n$-cocycle, that is,
\[
\theta^{\rm Q} \in Z^n_{\rm Q} (X;A)_Y  ~~\Longleftrightarrow ~~ \theta^{\rm SQ} \in Z^n_{\rm SQ} (D_X, \rpm ;A)_Y.
\]
\end{corollary}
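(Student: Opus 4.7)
The plan is to deduce the corollary formally from the chain map structure already established in Lemma~\ref{keylemma4.8}, without any further computation on chain elements. The hypothesis $\theta^{\rm Q}\circ T_n^{D_X \to X} = \theta^{\rm SQ}$ exhibits $\theta^{\rm SQ}$ as the image of $\theta^{\rm Q}$ under the cochain-level map induced by $T_n^{D_X \to X}$. Since $T_*^{D_X \to X}$ is a chain isomorphism between $C_*^{\rm SQ}(D_X, \rpm)_Y$ and $C_*^{\rm Q}(X)_Y$, the cocycle conditions on the two sides should be identified simply by transporting the coboundary through this chain map.

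Concretely, using the definitions $\delta_{\rm Q}^n(\theta^{\rm Q}) = \theta^{\rm Q} \circ \partial^{\rm Q}_{n+1}$ and $\delta_{\rm SQ}^n(\theta^{\rm SQ}) = \theta^{\rm SQ} \circ \partial^{\rm SQ}_{n+1}$ together with the chain map identity $T_n^{D_X \to X} \circ \partial^{\rm SQ}_{n+1} = \partial^{\rm Q}_{n+1} \circ T_{n+1}^{D_X \to X}$ from Lemma~\ref{keylemma4.8}, I would compute
\[
\delta_{\rm SQ}^n(\theta^{\rm SQ}) \;=\; \theta^{\rm Q} \circ T_n^{D_X \to X} \circ \partial^{\rm SQ}_{n+1} \;=\; \theta^{\rm Q} \circ \partial^{\rm Q}_{n+1} \circ T_{n+1}^{D_X \to X} \;=\; \delta_{\rm Q}^n(\theta^{\rm Q}) \circ T_{n+1}^{D_X \to X}.
\]
Because $T_{n+1}^{D_X \to X}$ is bijective by Lemma~\ref{lem:bijective}, precomposition with it is an injection on $\mathrm{Hom}(-, A)$, so $\delta_{\rm SQ}^n(\theta^{\rm SQ}) = 0$ if and only if $\delta_{\rm Q}^n(\theta^{\rm Q}) = 0$. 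This is exactly the claimed equivalence $\theta^{\rm Q} \in Z_{\rm Q}^n(X;A)_Y \Leftrightarrow \theta^{\rm SQ} \in Z_{\rm SQ}^n(D_X, \rpm;A)_Y$.

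There is essentially no obstacle here: once Lemmas~\ref{lem:bijective} and~\ref{keylemma4.8} are in hand, the corollary is a one-line diagram chase. The only care needed is to keep the indices $n$ and $n+1$ straight across the chain map relation, which the single display above makes explicit.
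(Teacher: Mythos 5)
Your proof is correct and matches the paper's intent: the paper states Corollary~\ref{cor:4.9} without proof as an immediate consequence of the cochain-level isomorphism underlying Theorem~\ref{thm:homology}(2), and your diagram chase using the chain map identity from Lemma~\ref{keylemma4.8} together with the surjectivity of $T_{n+1}^{D_X \to X}$ is exactly the argument being left implicit.
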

\section{Invariants for oriented (or unoriented) links using quandles can be interpreted  by using  symmetric quandles}\label{sec:5}

In this section, we discuss oriented link invariants using quandles. Especially, we show that 
quandle coloring numbers, quandle homology invariants and quandle cocycle invariants for oriented links can be interpreted by using symmetric quandles.
\subsection{An interpretation of  quandle coloring numbers by using symmetric quandle colorings}\label{subsection:coloringnumber}

Let $X$ be a quandle and $(D_X, \rpm)$ the symmetric double of $X$.

By performing a finite number of basic inversions, we can see that any $(D_X,\rpm)$-coloring  $C$ of a diagram $D$ of an unoriented link is uniquely represented by an assignment of a normal orientation  and an element of $X^{+1}$ to each  semi-arc of $D$, see Figure~\ref{canonicalform}. 
We call the assignment the {\it canonical form} of $C$ (or of $(D,C)$).
\begin{figure}[t]
 \begin{center}
\includegraphics[width=90mm]{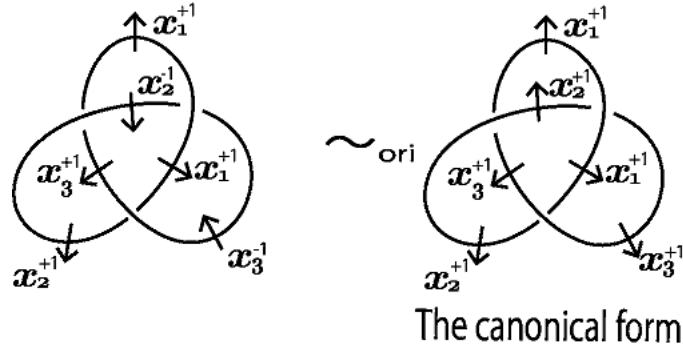}
\end{center}
\caption{The canonical form of a $(D_X,\rpm)$-coloring}
\label{canonicalform}
\end{figure}

Let $D$ be a diagram of an unoriented link and  $C$ a $(D_X,\rpm)$-coloring  of $D$.
For a semi-arc $\alpha$  of $D$ and the normal orientation $n_\alpha$  assigned to $\alpha$ for the canonical form of $C$,  
we set the orientation $o_\alpha$ of $\alpha$ so that the pair $(o_\alpha, n_\alpha)$ matches the right-handed orientation of $\mathbb R^2$.
Since for the canonical form of a $(D_X,\rpm)$-coloring of $D$, 
any two semi-arcs  of $D$ coming from the same component of $L$ have the coherent normal orientations, the above orientations $o_\alpha$ for the semi-arcs of $D$ induce an orientation $o$ of  $D$, see the upper half of  Figure~\ref{CO-map}. 
\begin{figure}[t]
 \begin{center}
\includegraphics[width=97mm]{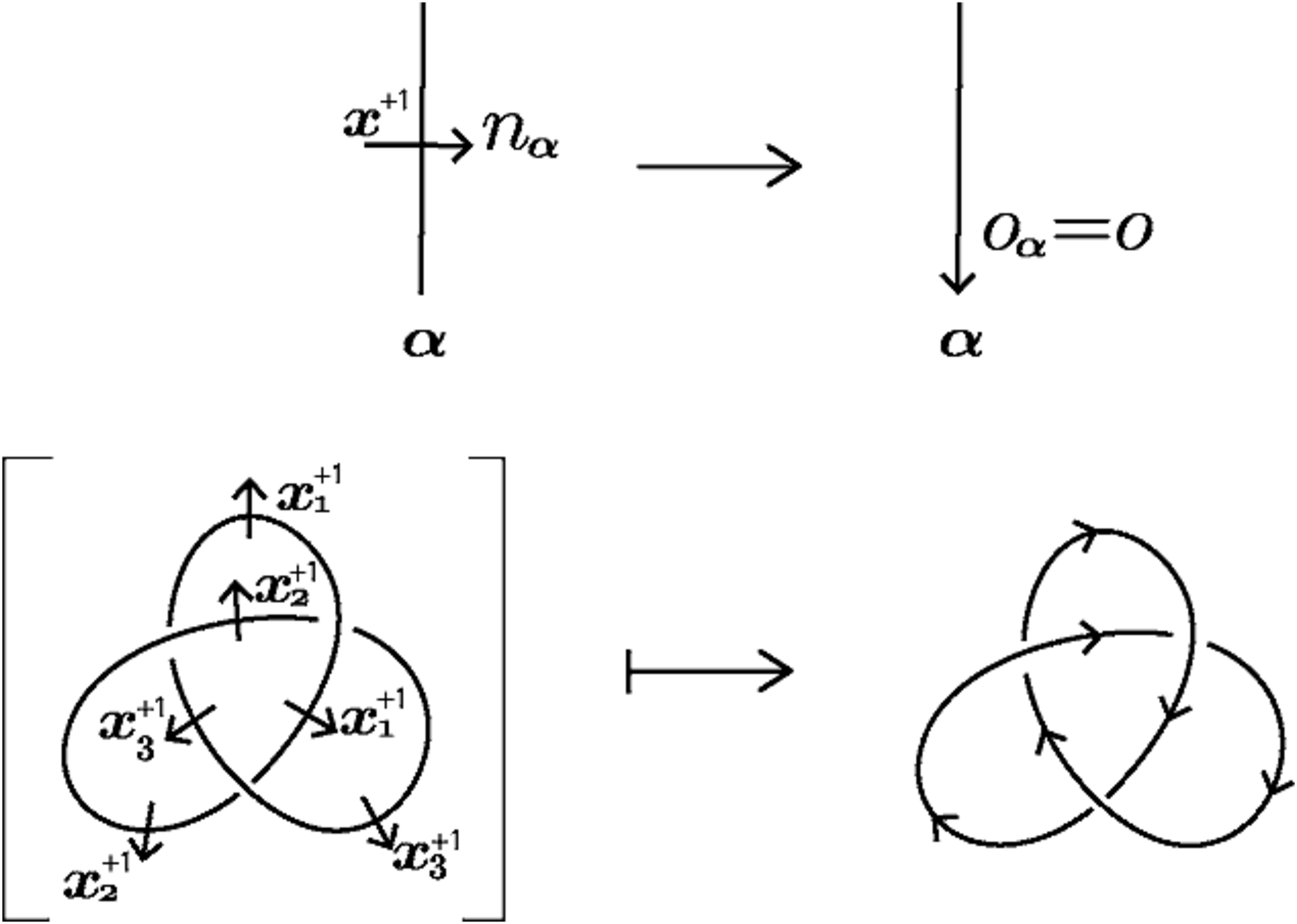}
\end{center}
\caption{The map $T_{(D,[C]_{\rm ori})\to (D,o)}$}
\label{CO-map}
\end{figure}
We call the orientation of $D$ the {\it orientation induced by the canonical form of $(D,C)$} and the diagram $(D,o)$ the {\it oriented diagram induced by the canonical form of $(D,C)$}.
Let $C$ and $C'$ be $(D_X,\rpm)$-colorings of $D$.
We say that $C$ and $C'$ are {\it orientation equivalent} $(C \sim _{\rm ori} C')$ if  the orientation induced by the canonical form of $(D, C)$ coincides with that of $(D, C')$. 
We denote by $[C]_{\rm ori}$ the orientation equivalence class of a $(D_X,\rpm)$-coloring $C$.

In this paper, ${\rm ori }(D)$ means the set of orientations of $D$.
The {\it transformation map from $(D,C)$ to $(D,o)$} is the map
\begin{align*}
&T_{(D,C)\to (D,o)}: \{D\} \times {\rm Col}_{(D_X,\rpm)} (D) \to \{D\} \times {\rm ori}(D);\\
&(D,C) \mapsto \mbox{the oriented diagram $(D,o)$ induced by the canonical form of $(D,C)$},
\end{align*}
and this induces the {\it transformation map from $(D,[C]_{\rm ori})$ to $(D,o)$}: 
\begin{align*}
&T_{(D,[C]_{\rm ori})\to (D,o)}: \{D\} \times {\rm Col}_{(D_X,\rpm)} (D)/\sim_{\rm ori} \to \{D\} \times {\rm ori}(D);\\
&(D, [C]_{\rm ori}) \mapsto \mbox{the oriented diagram $(D,o)$ induced by the canonical form of $(D,C)$},
\end{align*}
see the lower half of Figure~\ref{CO-map}.
We then have the following property.
\begin{lemma}%\label{lem:correspondenceCorandori}
$T_{(D,[C]_{\rm ori})\to (D,o)}$ is bijective.
\end{lemma}
\begin{proof}
The injectivity is clear, and hence, we show  the surjectivity.
Let $o \in {\rm ori}(D)$ and $x\in X$.
For each semi-arc $\alpha$ of $(D,o)$, 
we assign the element $x^{+1}\in D_X$ and the normal orientation $n_{\alpha}$ to $\alpha$ so that the pair $(o, n_\alpha)$ matches the right-handed orientation of $\mathbb R^2$.
Then we can see that the assignment  is the canonical form of a $(D_X,\rpm)$-coloring of $D$ and the equivalence class $[C]_{\rm ori}$ of the $(D_X,\rpm)$-coloring $C$  satisfies that $T_{(D,[C]_{\rm ori})\to (D,o)}(D, [C]_{\rm ori})=(D,o)$.
\end{proof}
Any $(D_X,\rpm)$-colored diagram $(D, C)$ uniquely induces an oriented, $X$-colored diagram  $((D,o) , \bar{C})$ by setting on $D$ the orientation induced by the canonical form of $(D,C)$ and replacing, for the canonical form of $C$, the assigned element $x^{+1} \in X^{+1}$ of each (semi-)arc with $x \in X$, see the upper half of Figure~\ref{CD-map}.
\begin{figure}[t]
 \begin{center}
\includegraphics[width=97mm]{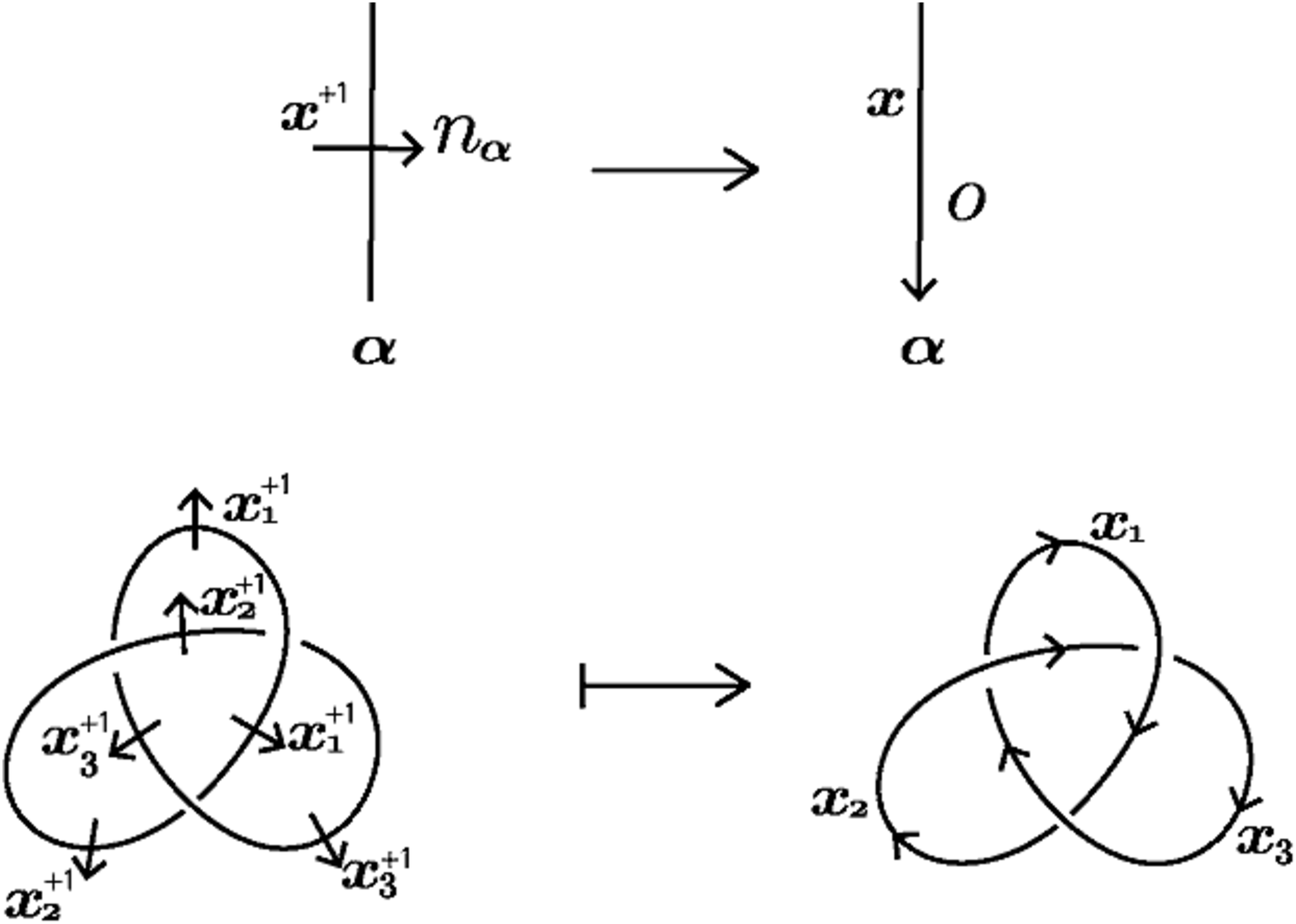}
\end{center}
\caption{The map $T_{(D,C) \to ((D,o), C)}$}
\label{CD-map}
\end{figure}
We call such $((D,o), \bar{C})$ the {\it oriented, $X$-colored diagram induced by the canonical form of $(D,C)$}.
The {\it transformation map from $(D,C)$ to $((D,o), C)$} is the map
\begin{align*}
&T_{(D,C) \to ((D,o), C)}:   \{D\} \times {\rm Col}_{(D_X,\rpm)}(D) \to  \bigcup_{o\in {\rm ori}(D)} \{(D,o)\}\times  {\rm Col}_X(D,o);\\
&(D,C) \mapsto \mbox{ the oriented, $X$-colored diagram induced by the canonical form of $(D,C)$,}
\end{align*}
 see the lower half of Figure~\ref{CD-map}.
\begin{lemma}%\label{lem:correspondenceCorandori}
$T_{(D,C) \to ((D,o), C)}$ is bijective.
\end{lemma}
\begin{proof}
Any oriented, $X$-colored diagram $((D,o), \bar{C})$ uniquely induces a $(D_X,\rpm)$-colored diagram $(D, C)$ as follows.
For each arc $\alpha$ of $(D,o)$, 
replace the orientation $o$ for $\alpha$ with the normal orientations $n_\alpha$ of the semi-arcs included in $\alpha$ such that $(o, n_\alpha)$ matches the right-handed orientation of $\mathbb R^2$ and replace the assigned element $x \in X$ of $\alpha$ for $\bar{C}$ with $x^{+1}\in X^{+1}$ for the semi-arcs included in $\alpha$.  Lemma~\ref{lem:X+1} shows that the replaced assignment $C$ of the unoriented diagram $D$ represents a $(D_X,\rpm)$-coloring of $D$.  
Thus we have the inverse map of $T_{(D,C) \to ((D,o), C)}$:
\begin{align*}
&T_{((D,o), C) \to (D,C) }:  \bigcup_{o\in {\rm ori}(D)} \{(D,o)\}\times  {\rm Col}_X(D,o) \to \{D\} \times {\rm Col}_{(D_X,\rpm)}(D) ;\\
& ((D,o), C) \mapsto (D, \bar{C}) \mbox{ defined as above}.
\end{align*}
\end{proof}

The next theorem implies that quandle coloring numbers for oriented links can be interpreted by using symmetric quandle colorings.
\begin{theorem}\label{lem:correspondenceCorandori2}
Let $(D,o)$ be a diagram of an oriented link and take $[C]_{\rm ori} \in {\rm Col}_{(D_X,\rpm)} (D)/\sim_{\rm ori} $ such that 
$(D,o)  =T_{(D,[C]_{\rm ori})\to (D,o)} (D, [C]_{\rm ori} )$.
Then there exists a one-to-one correspondence between the sets ${\rm Col}_X(D,o)$ and $[C]_{\rm ori}$, and thus, we have
\[
\# {\rm Col}_X(D,o) = \# [C]_{\rm ori}.
\]
\end{theorem}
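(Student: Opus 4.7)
The plan is to deduce this theorem directly from the bijection $T_{(D,C) \to ((D,o), C)}$ established in the previous lemma, by restricting it to the equivalence class $[C]_{\rm ori}$. More precisely, I want to show that this bijection, which in general lands in $\bigcup_{o' \in {\rm ori}(D)} \{(D,o')\} \times {\rm Col}_X(D,o')$, sends exactly the elements of $[C]_{\rm ori}$ to the subset $\{(D,o)\} \times {\rm Col}_X(D,o)$.

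First I would unpack the definitions: an element $C' \in [C]_{\rm ori}$ is a $(D_X,\rpm)$-coloring of $D$ whose canonical form induces the orientation $o$. The map $T_{(D,C) \to ((D,o), C)}$ is defined by simultaneously (i) endowing $D$ with the orientation induced by the canonical form of the given coloring, and (ii) stripping the $+1$ exponents from the $X^{+1}$-labels of the canonical form. Therefore, applied to $C' \in [C]_{\rm ori}$, step (i) returns precisely $o$, and step (ii) produces an $X$-coloring of $(D,o)$. Hence the restriction of $T_{(D,C)\to((D,o),C)}$ to $\{D\} \times [C]_{\rm ori}$ takes values in $\{(D,o)\} \times {\rm Col}_X(D,o)$.

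Conversely, for any $\bar{C} \in {\rm Col}_X(D,o)$, the inverse map $T_{((D,o), C) \to (D,C)}$ equips each semi-arc with the normal orientation compatible with $o$ and the label $x^{+1}$ (where $x$ is the original $X$-label). The resulting assignment is already in canonical form, so the orientation it induces on $D$ is $o$ itself, and the underlying $(D_X,\rpm)$-coloring therefore lies in $[C]_{\rm ori}$. Combining these two directions shows that the restriction is a bijection, which immediately yields the equality of cardinalities $\#{\rm Col}_X(D,o) = \#[C]_{\rm ori}$.

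The only real point requiring care is the compatibility between the orientation-equivalence relation $\sim_{\rm ori}$ and the orientation recorded by $T_{(D,C)\to((D,o),C)}$. This compatibility, however, is essentially built into the definitions — both the equivalence class and the transformation are defined by passing through the canonical form and reading off the induced orientation — so no substantive obstacle is expected.
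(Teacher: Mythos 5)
Your proposal is correct and matches the paper's own argument: the paper likewise obtains the correspondence by restricting the bijection $T_{(D,C)\to((D,o),C)}$ to $\{D\}\times[C]_{\rm ori}$ and observing that it lands bijectively onto $\{(D,o)\}\times{\rm Col}_X(D,o)$. You simply spell out the two inclusions (via the canonical form and the inverse map $T_{((D,o),C)\to(D,C)}$) that the paper leaves implicit.
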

\begin{proof}
By the definition of the bijective translation map $T_{(D,C) \to ((D,o), C)}$, we have the restriction 
\begin{align*}
&T_{(D,C) \to ((D,o), C)} |_{\{D\} \times [C]_{\rm ori}} : \{D\} \times [C]_{\rm ori} \to \{(D,o)\}\times  {\rm Col}_{ X }(D,o), 
\end{align*}
which is bijective.
Clearly this map gives a one-to-one correspondence between ${\rm Col}_X(D,o)$ and $[C]_{\rm ori}$.
\end{proof}

For an unoriented link $L$, by considering all the orientations of $L$ and taking $\# {\rm Col}_X(D,o)$ for each orientation $o$, we can define an invariant of $L$. 
More precisely, for a diagram $D$ of $L$, the multi-set 
\[
\{ \# {\rm Col}_{X} (D,o) ~|~ o \in {\rm ori}(D) \} 
\]
is an invariant of $L$.  
The next property implies that quandle coloring numbers  for unoriented links can be  also interpreted by using symmetric quandle colorings.
\begin{corollary}\label{thm:coloringnumber}
Let $D$ be a diagram of an unoriented link.
As multi-sets, we have
\[
\{ \# {\rm Col}_{X} (D,o) ~|~ o  \in {\rm ori}(D) \} = \{  \# [C]_{\rm ori} ~|~ [C]_{\rm ori} \in {\rm Col}_{(D_X,\rpm)}(D)/\sim_{\rm ori}\}.
\]
\end{corollary}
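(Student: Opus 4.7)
The plan is to leverage the two bijective transformation maps and the counting identity already established in Theorem~\ref{lem:correspondenceCorandori2}, so that the corollary becomes a bookkeeping argument at the level of multi-sets.

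First, I would index both multi-sets by the same underlying finite set. The map
\[
T_{(D,[C]_{\rm ori})\to (D,o)}:\{D\}\times {\rm Col}_{(D_X,\rpm)}(D)/\!\sim_{\rm ori}\;\longrightarrow\;\{D\}\times {\rm ori}(D)
\]
was shown to be bijective. Hence there is a canonical pairing between orientations $o\in {\rm ori}(D)$ and equivalence classes $[C]_{\rm ori}\in {\rm Col}_{(D_X,\rpm)}(D)/\!\sim_{\rm ori}$: each orientation $o$ is matched with the unique class $[C]_{\rm ori}=[C(o)]_{\rm ori}$ such that $T_{(D,[C]_{\rm ori})\to (D,o)}(D,[C(o)]_{\rm ori})=(D,o)$, and vice versa.

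Next, I would apply Theorem~\ref{lem:correspondenceCorandori2} to each matched pair to deduce the term-by-term equality
\[
\# {\rm Col}_{X}(D,o) \;=\; \# [C(o)]_{\rm ori}.
\]
Since a multi-set is determined by the multiplicity with which each value is attained, and the pairing $o\leftrightarrow [C(o)]_{\rm ori}$ is a bijection on index sets under which the numerical values agree, the two multi-sets
\[
\{\# {\rm Col}_{X}(D,o)\mid o\in {\rm ori}(D)\}\quad\text{and}\quad \{\# [C]_{\rm ori}\mid [C]_{\rm ori}\in {\rm Col}_{(D_X,\rpm)}(D)/\!\sim_{\rm ori}\}
\]
coincide.

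There is essentially no obstacle here, since all the real work has already been done in establishing the bijectivity of $T_{(D,[C]_{\rm ori})\to (D,o)}$ and in proving Theorem~\ref{lem:correspondenceCorandori2}; the only thing to be careful about is the multi-set interpretation, namely checking that equal numerical values on different indices are counted with the correct multiplicity. This follows immediately from the fact that a multi-set equality of the form $\{f(i)\mid i\in I\}=\{g(j)\mid j\in J\}$ holds whenever one exhibits a bijection $\varphi:I\to J$ with $f(i)=g(\varphi(i))$, which is exactly the situation provided by $T_{(D,[C]_{\rm ori})\to (D,o)}$ together with Theorem~\ref{lem:correspondenceCorandori2}.
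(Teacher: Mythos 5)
Your proposal is correct and is exactly the argument the paper intends: the corollary is deduced by combining the bijectivity of $T_{(D,[C]_{\rm ori})\to (D,o)}$ (which pairs each orientation $o$ with a unique class $[C]_{\rm ori}$) with the termwise cardinality equality of Theorem~\ref{lem:correspondenceCorandori2}. The paper leaves this as an immediate consequence without writing out the bookkeeping, and your careful handling of the multi-set multiplicities fills in precisely that routine step.
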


\subsection{An interpretation of  quandle coloring numbers by using symmetric quandle colorings II}
In this subsection, we will see that the properties shown in Subsection~\ref{subsection:coloringnumber} are extended to the cases of $X_Y$-colorings and $(D_X, \rpm)_Y$-colorings. Note that for simplicity, we use the same notation and names as in Subsection~\ref{subsection:coloringnumber}.

Let $X$ be a quandle, $(D_X, \rpm)$ the symmetric double of $X$, and $Y$  an $X$-set. 
By Lemma~\ref{lem:Y}, we can also regard $Y$ as a $(D_X,\rpm)$-set.
Each $X_Y$-coloring $C$ of an oriented diagram $(D,o)$ is also regarded as the $X$-coloring, say also $C$, of $(D,o)$ by forgetting the assignment by $Y$.
Similarly, for each $(D_X,\rpm)_Y$-coloring $C$ of an unoriented diagram $D$ is also regarded as the $(D_X,\rpm)$-coloring, say also $C$, of $D$ by forgetting the assignment by $Y$.
Hence, the {\it canonical form} of a $(D_X,\rpm)_Y$-coloring $C$ (or of a $(D_X,\rpm)_Y$-colored diagram $(D,C)$) is defined by the assignment which is the canonical form of the $(D_X,\rpm)$-coloring $C$ when we forget the assignment by $Y$.
Moreover, as in the case of Subsection~\ref{subsection:coloringnumber}, a $(D_X,\rpm)_Y$-colored diagram $(D,C)$ induces an orientation $o$ (resp. an oriented diagram $(D,o)$) by considering the orientation (resp. the oriented diagram) induced by the canonical form of $(D,C)$.

Let $D$ be a diagram of an unoriented link.
Let $C$ and $C'$ be $(D_X,\rpm)_Y$-colorings of the diagram $D$.
We say that $C$ and $C'$ are {\it orientation equivalent} $(C \sim _{\rm ori} C')$ if  the orientation induced by the canonical form of $(D,C)$ coincides with that of $(D,C')$. 
We denote by $[C]_{\rm ori}$ the orientation equivalence class of a $(D_X,\rpm)_Y$-coloring $C$.

The {\it transformation map from $(D,C)$ to $(D,o)$} is the map
\begin{align*}
&T_{(D,C)\to (D,o)}: \{D\} \times {\rm Col}_{(D_X,\rpm)_Y} (D) \to \{D\} \times {\rm ori}(D);\\
&(D,C) \mapsto \mbox{the oriented diagram $(D,o)$ induced by the canonical form of $(D,C)$},
\end{align*}
and this induces the {\it transformation map from $(D,[C]_{\rm ori})$ to $(D,o)$}: 
\begin{align*}
&T_{(D,[C]_{\rm ori})\to (D,o)}: \{D\} \times {\rm Col}_{(D_X,\rpm)_Y} (D)/\sim_{\rm ori} \to \{D\} \times {\rm ori}(D);\\
&(D, [C]_{\rm ori}) \mapsto \mbox{the oriented diagram $(D,o)$ induced by the canonical form of $(D,C)$},
\end{align*}
see Figure~\ref{CO-map2}.
\begin{figure}[t]
 \begin{center}
\includegraphics[width=105mm]{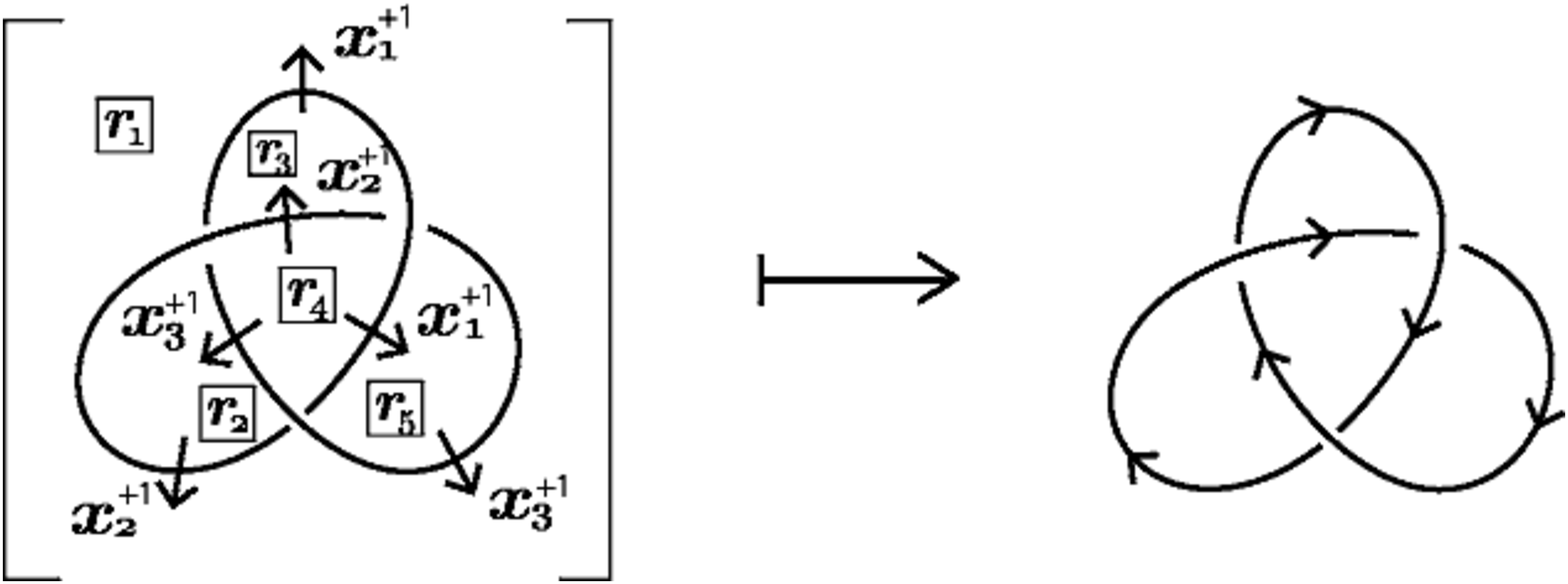}
\end{center}
\caption{The map $T_{(D,[C]_{\rm ori}) \to (D,o)}$}
\label{CO-map2}
\end{figure}
We then have the following property.
\begin{lemma}
$T_{(D,[C]_{\rm ori})\to (D,o)}$ is bijective.
\end{lemma}
Any $(D_X,\rpm)_Y$-colored diagram $(D, C)$ uniquely induces an oriented, $X_Y$-colored diagram $((D,o) , \bar{C})$ by setting on $D$ the orientation induced by the canonical form of $(D,C)$, replacing, for the canonical form of $C$, the assigned element $x^{+1} \in X^{+1}$ of each (semi-)arc with $x \in X$, and leaving the assignment by $Y$ as it is, see Figure~\ref{CD-map2}.
\begin{figure}[t]
 \begin{center}
\includegraphics[width=110mm]{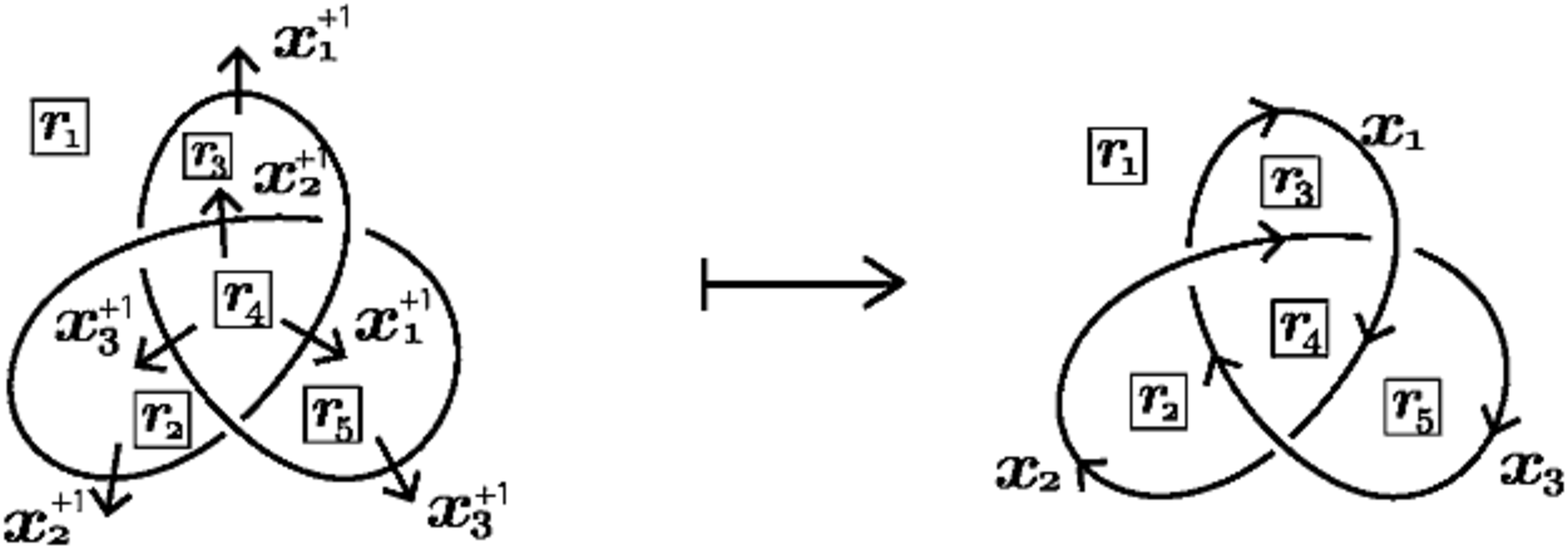}
\end{center}
\caption{The map $T_{(D,C) \to ((D,o), C)}$}
\label{CD-map2}
\end{figure}
We call such $((D,o), \bar{C})$ the {\it oriented, $X_Y$-colored diagram induced by the canonical form of $(D,C)$}.
The {\it transformation map from $(D,C)$ to $((D,o), C)$} is the map
\begin{align*}
&T_{(D,C) \to ((D,o), C)}:   \{D\} \times {\rm Col}_{(D_X,\rpm)_Y}(D) \to  \bigcup_{o\in {\rm ori}(D)} \{(D,o)\}\times  {\rm Col}_{X_Y}(D,o);\\
&(D,C) \mapsto \mbox{ the oriented, $X_Y$-colored diagram induced by the canonical form of $(D,C)$,}
\end{align*}
 see Figure~\ref{CD-map2}.
\begin{lemma}
$T_{(D,C) \to ((D,o), C)}$ is bijective.
\end{lemma}
 We have the following theorem.
\begin{theorem}\label{thm:correspondencecoloring}
Let $(D,o)$ be a diagram of an oriented link and take $[C]_{\rm ori} \in {\rm Col}_{(D_X,\rpm)_Y} (D)/\sim_{\rm ori} $ such that 
$(D,o)  =T_{(D,[C]_{\rm ori})\to (D,o)} (D, [C]_{\rm ori} )$.
Then there exists a one-to-one correspondence between ${\rm Col}_{X_Y}(D,o)$ and $[C]_{\rm ori}$, and thus, we have
\[
\# {\rm Col}_{X_Y}(D,o) = \# [C]_{\rm ori}.
\]
\end{theorem}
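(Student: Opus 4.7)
The plan is to mimic the proof of Theorem~\ref{lem:correspondenceCorandori2} almost verbatim, using the bijective transformation map $T_{(D,C) \to ((D,o), C)}$ established just above the statement. The key observation is that the orientation equivalence class $[C]_{\rm ori}$ is, by construction, the full fiber over $(D,o)$ of the map $T_{(D,[C]_{\rm ori}) \to (D,o)}$, so restricting $T_{(D,C)\to ((D,o),C)}$ to $\{D\} \times [C]_{\rm ori}$ should land exactly in $\{(D,o)\} \times {\rm Col}_{X_Y}(D,o)$.

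First I would unwind the definitions to verify that the image lies in the expected set: for any $C' \in [C]_{\rm ori}$, the canonical form of $(D,C')$ induces the orientation $o$ of $D$ (this is the defining property of the equivalence class and of the hypothesis on $[C]_{\rm ori}$), and so $T_{(D,C)\to ((D,o),C)}(D,C')$ is of the form $((D,o), \bar{C}')$ for some $\bar{C}' \in {\rm Col}_{X_Y}(D,o)$. This gives the restriction
\[
T_{(D,C)\to ((D,o),C)}\big|_{\{D\}\times [C]_{\rm ori}} : \{D\}\times [C]_{\rm ori} \longrightarrow \{(D,o)\}\times {\rm Col}_{X_Y}(D,o).
\]

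Next I would check bijectivity. Injectivity is inherited from the bijectivity of $T_{(D,C)\to ((D,o),C)}$. For surjectivity, I would use the inverse map $T_{((D,o),C)\to (D,C)}$ produced in the proof of the preceding lemma: given any $X_Y$-coloring $\bar{C}'$ of $(D,o)$, its preimage is a $(D_X,\rpm)_Y$-coloring $C'$ of $D$ already in canonical form, whose normal orientations on each semi-arc are chosen to match $o$ under the right-handed convention. Hence the orientation induced by the canonical form of $(D,C')$ is exactly $o$, so $C' \in [C]_{\rm ori}$ and the preimage lies in $\{D\}\times [C]_{\rm ori}$. This yields the one-to-one correspondence, from which the cardinality equality $\# {\rm Col}_{X_Y}(D,o) = \# [C]_{\rm ori}$ follows.

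I do not anticipate any genuine obstacle: the argument is essentially a restriction argument on an already-established bijection, and the $Y$-labels on complementary regions are simply carried along unchanged by both $T_{(D,C)\to ((D,o),C)}$ and its inverse, since neither the canonicalization of the $D_X$-labels on semi-arcs nor the passage to the induced orientation interferes with the $Y$-assignment on regions. The mildest care needed is to point out that the compatibility conditions defining an $X_Y$-coloring of $(D,o)$ and a $(D_X,\rpm)_Y$-coloring of $D$ agree once one restricts to canonical forms and uses Lemma~\ref{lem:X+1} together with the identification of $Y$ as a $(D_X,\rpm)$-set from Lemma~\ref{lem:Y}; this part is routine and parallels the unlabeled case proved in Theorem~\ref{lem:correspondenceCorandori2}.
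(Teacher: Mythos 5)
Your proposal is correct and follows essentially the same route as the paper: the paper's proof also simply restricts the bijection $T_{(D,C)\to((D,o),C)}$ to $\{D\}\times[C]_{\rm ori}$ and observes that it lands bijectively onto $\{(D,o)\}\times{\rm Col}_{X_Y}(D,o)$, exactly as in Theorem~\ref{lem:correspondenceCorandori2}. Your additional verification that the fiber over $(D,o)$ is precisely $[C]_{\rm ori}$ and that the $Y$-labels are carried along unchanged is a harmless elaboration of what the paper leaves implicit.
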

\begin{proof}
As in the proof of Theorem~\ref{lem:correspondenceCorandori2},
the bijection  is given by the restriction of the translation map $T_{(D,C) \to ((D,o), C)}$
\begin{align*}
&T_{(D,C) \to ((D,o), C)} |_{\{D\} \times [C]_{\rm ori}} : \{D\} \times [C]_{\rm ori} \to \{(D,o)\}\times  {\rm Col}_{ X_Y }(D,o). 
\end{align*}
\end{proof}

%%%%%%%%%%%%%%%%%%%%
%%%%%%%%%%%%%%%%%%%%%
%%%%%%%%%%%%%%%%%%%%

\subsection{An interpretation of quandle cocycle invariants by using symmetric quandle cocycle invariants}
Let $X$ be a quandle, $(D_X, \rpm)$ the symmetric double of $X$, and $Y$ an $X$-set.
We note again that by Lemma~\ref{lem:Y}, we can also regard $Y$ as a $(D_X,\rpm)$-set.

For a diagram $D$ of an unoriented link and $[C]_{\rm ori} \in {\rm Col}_{(D_X,\rpm)_Y}(D)/\sim_{\rm ori}$, we denote by $\mathcal{W}^{(D_X,\rpm)_Y} (D, [C]_{\rm ori})$ the multi-set
\[
\{
[W^{(D_X,\rpm)_Y} (D,C)] \in H_2^{\rm SQ} (D_X,\rpm)_Y ~|~ C \in [C]_{\rm ori}
\},
\]
where for  a $(D_X,\rpm)_Y$-colored diagram  $(D,C)$, we denote by $W^{(D_X,\rpm)_Y}(D,C)$ the sum of the weights of all the crossings as in Subsection~\ref{Symmetric quandle cocycle invariants of (unoriented) links}, and  note that $\mathcal{W}^{(D_X,\rpm)_Y} (D, [C]_{\rm ori}) \subset \mathcal{W}^{(D_X,\rpm)_Y} (D)$.
The next theorem shows that the quandle homology invariant $\mathcal{W}^{X_Y} (L,o)$ of an oriented link $(L,o)$ can be also interpreted by using symmetric quandles.

\begin{theorem}\label{thm:weightsum2}
Let $(D,o)$ is a diagram of an oriented link and take $[C]_{\rm ori} \in {\rm Col}_{(D_X,\rpm)_Y}(D)/\sim _{\rm ori}$ 
such that $(D,o)=T_{(D,[C]_{\rm ori}) \to (D,o) }(D, [C]_{\rm ori})$.
Then as multi-sets, we have 
\[
(T_2^{D_X \to X})^*\big(\mathcal{W}^{(D_X,\rpm)_Y} (D, [C]_{\rm ori})\big)=\mathcal{W}^{X_Y} (D,o).
\]
\end{theorem}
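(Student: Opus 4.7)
The plan is to promote the crossing-by-crossing comparison to a cycle-level identity, after which the theorem follows by summing and passing to homology. Concretely, I would fix a representative $C \in [C]_{\rm ori}$ in its canonical form (so every semi-arc is labeled by an element of $X^{+1}$ equipped with the normal orientation $n_\alpha$ for which $(o,n_\alpha)$ matches the right-handed orientation of $\mathbb R^2$). Via the bijection $T_{(D,C)\to((D,o),C)}$ from Theorem~\ref{thm:correspondencecoloring}, this yields the associated $X_Y$-coloring $\bar C$ of $(D,o)$. The target would be the $2$-chain identity
\[
T_2^{D_X \to X}\bigl(W^{(D_X,\rpm)_Y}(D,C)\bigr) \;=\; W^{X_Y}\bigl((D,o),\bar C\bigr),
\]
since the isomorphism $(T_2^{D_X\to X})^*$ of Theorem~\ref{thm:homology}~(1) then sends homology classes to homology classes, and running $C$ over $[C]_{\rm ori}$ via the bijection of Theorem~\ref{thm:correspondencecoloring} gives the asserted equality of multi-sets.

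To establish the cycle-level identity I would analyze a single crossing $\chi$. The flexibility in the symmetric quandle recipe (Subsection~\ref{Symmetric quandle cocycle invariants of (unoriented) links}) is to choose the specified region; I would choose it to be the complementary region $\gamma$ from which the normal orientations of both the over- and the under-semi-arc at $\chi$ point away, i.e. the same region $\gamma$ used in the oriented quandle recipe (Subsection~\ref{Quandle cocycle invariants of oriented links}). Because the canonical form already carries the normal orientations that the symmetric weight wants, no basic inversion is necessary, and the symmetric weight reads $\varepsilon(r,x_1^{+1},x_2^{+1})$ for the labels $r,x_1^{+1},x_2^{+1}$ on $\gamma,\alpha_1,\alpha_2$. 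Since $(o,n_\alpha)$ is right-handed on every semi-arc, the sign convention $\varepsilon=\pm$ (defined by right-handedness of the pair $(n_1,n_2)$) coincides with the sign of $\chi$ viewed as a crossing of $(D,o)$, so the oriented weight at $\chi$ is $\varepsilon(r,x_1,x_2)$. By Remark~\ref{remark4.7}, $T_2^{D_X\to X}$ maps $\varepsilon(r,x_1^{+1},x_2^{+1})$ to $\varepsilon(r,x_1,x_2)$, giving the identity at $\chi$; summing over all crossings yields the desired $2$-chain identity.

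The main obstacle is the single-crossing bookkeeping: one must ensure that the chosen specified region is compatible with the canonical form (so that no basic inversion intervenes and alters signs or substitutes $\rho(x)$ for $x$), and that the two sign conventions—the positivity/negativity of $\chi$ relative to $o$ in the oriented setting, versus the right-handedness of $(n_1,n_2)$ in the symmetric setting—are genuinely the same. Both points reduce to a direct inspection of Figures~\ref{weight-quandle} and \ref{weight-symmetricquandle} together with the definition of the canonical form, and no subtler homological argument is needed beyond invoking Theorem~\ref{thm:homology}~(1) at the end.
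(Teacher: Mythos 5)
Your proposal is correct and follows essentially the same route as the paper: reduce to the chain-level identity $T_2^{D_X\to X}(W^{(D_X,\rpm)_Y}(D,C))=W^{X_Y}((D,o),\bar C)$ via a crossing-by-crossing comparison of weights in the canonical form, then pass to homology and run over $[C]_{\rm ori}$ using the bijection from Theorem~\ref{thm:correspondencecoloring}. Your treatment of the specified-region choice and the matching of sign conventions merely makes explicit what the paper leaves to an inspection of Figure~\ref{weight1}.
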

\begin{proof}
Note that as mentioned in the proof of Theorem~\ref{thm:correspondencecoloring}, 
the restriction 
\begin{align*}
&T_{(D,C) \to ((D,o), C)} |_{\{D\} \times [C]_{\rm ori}} : \{D\} \times [C]_{\rm ori} \to \{(D,o)\}\times  {\rm Col}_{ X_Y }(D,o) 
\end{align*}
is a bijection.

For $C\in [C]_{\rm ori}$, let $\bar{C}\in {\rm Col}_{ X_Y }(D,o)$ such that $T_{(D,C) \to ((D,o), C)} (D,C)= ((D,o), \bar{C})$.
Since $\bar{C}$ is obtained from $C$ by giving on $D$ the orientation induced by the canonical form of $(D,C)$, replacing, for the canonical form of $C$, the assigned element $x^{+1} \in X^{+1}$ of each (semi-)arc with $x \in X$, and leaving the assignment by $Y$ as it is,  when we have a weight $w_\chi=\varepsilon (r,x_1^{+1},x_2^{+1})$ for a crossing $\chi$ of $(D,C)$, we have the weight $\bar{w}_\chi=\varepsilon (r,x_1,x_2)$  for the same crossing $\chi$ of $((D,o), \bar{C})$, see Figure~\ref{weight1}.
This implies that 
$T_2^{D_X \to X} (W^{(D_X,\rpm)_Y} (D, C)) = W^{X_Y} ((D,o),\bar{C})$, and moreover,
$(T_2^{D_X \to X})^* ([W^{(D_X,\rpm)_Y} (D, C)]) = [W^{X_Y} ((D,o),\bar{C})]$, and thus, we have $(T_2^{D_X \to X})^* (\mathcal{W}^{(D_X,\rpm)_Y} (D, [C]_{\rm ori}))=\mathcal{W}^{X_Y} (D,o)$.
\begin{figure}[t]
 \begin{center}
\includegraphics[width=90mm]{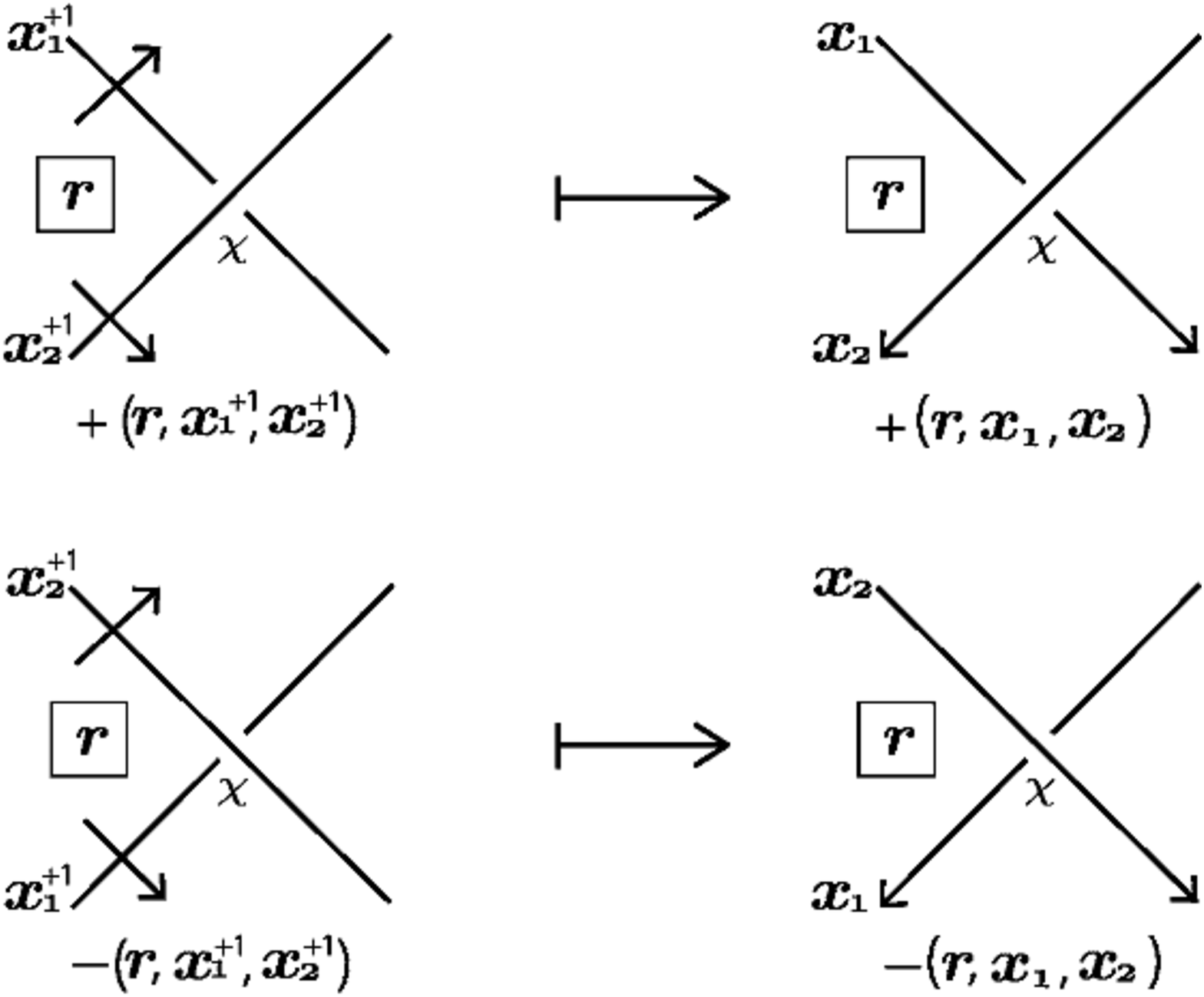}
\end{center}
\caption{The weight of a crossing}
\label{weight1}
\end{figure}
\end{proof}

For an unoriented link $L$, by considering all the orientations of $L$ and taking the quandle homology invariant  $\mathcal{W}^{X_Y} (L,o)$ for each orientation $o$, we can define an invariant of $L$. 
More precisely, for a diagram $D$ of $L$, the multi-set 
\[
\{ \mathcal{W}^{X_Y} (D,o) ~|~ o \in {\rm ori}(D) \} 
\]
is an invariant of $L$.
The next property implies that the quandle homology  invariants of an unoriented link $L$ can be  also interpreted by using symmetric quandles.
\begin{corollary}\label{corweightsum}
Let $D$ be a diagram of an unoriented link.
As multi-sets, we have
\[
\begin{array}{l}
\{ \mathcal{W}^{X_Y} (D,o) ~|~ o \in {\rm ori}(D) \}  \\[3pt]
=\{(T_2^{D_X \to X})^* (\mathcal{W}^{(D_X,\rpm)_Y} (D, [C]_{\rm ori}))~|~ [C]_{\rm ori} \in {\rm Col}_{(D_X,\rpm)_Y} (D)/\sim_{\rm ori} \}.
\end{array}
\]
\end{corollary}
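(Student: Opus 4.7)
The plan is to deduce the corollary directly from Theorem~\ref{thm:weightsum2} together with the bijectivity of the transformation map
\[
T_{(D,[C]_{\rm ori})\to (D,o)}\colon \{D\} \times {\rm Col}_{(D_X,\rpm)_Y}(D)/\sim_{\rm ori} \;\longrightarrow\; \{D\} \times {\rm ori}(D),
\]
which is the lemma stated just before Theorem~\ref{thm:correspondencecoloring}. This bijection is exactly what is needed to identify the indexing set ${\rm Col}_{(D_X,\rpm)_Y}(D)/\sim_{\rm ori}$ of the right-hand multi-set with the indexing set ${\rm ori}(D)$ of the left-hand multi-set.

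Concretely, I would fix a diagram $D$ and argue as follows. For each orientation $o\in{\rm ori}(D)$ there is, by the bijectivity above, a unique class $[C]_{\rm ori}\in{\rm Col}_{(D_X,\rpm)_Y}(D)/\sim_{\rm ori}$ with $(D,o)=T_{(D,[C]_{\rm ori})\to (D,o)}(D,[C]_{\rm ori})$, and conversely each such class arises from a unique $o$. Applying Theorem~\ref{thm:weightsum2} to every matched pair yields
\[
(T_2^{D_X \to X})^{\!*}\bigl(\mathcal{W}^{(D_X,\rpm)_Y}(D,[C]_{\rm ori})\bigr) \;=\; \mathcal{W}^{X_Y}(D,o).
\]
Collecting these equalities as $o$ ranges over ${\rm ori}(D)$ (equivalently as $[C]_{\rm ori}$ ranges over ${\rm Col}_{(D_X,\rpm)_Y}(D)/\sim_{\rm ori}$) produces the desired multi-set equality.

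Since both the bijection and Theorem~\ref{thm:weightsum2} are already in hand, there is essentially no genuine obstacle; the corollary is a direct reindexing of Theorem~\ref{thm:weightsum2} over all orientation-equivalence classes. The only minor point requiring attention is the preservation of multiplicities, but because $T_{(D,[C]_{\rm ori})\to (D,o)}$ is a true bijection (not merely a surjection), multiplicities are automatically respected, and the multi-set identity follows immediately.
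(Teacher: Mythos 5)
Your proposal is correct and is exactly the argument the paper intends: the corollary is obtained by reindexing Theorem~\ref{thm:weightsum2} along the bijection $T_{(D,[C]_{\rm ori})\to (D,o)}$ between ${\rm Col}_{(D_X,\rpm)_Y}(D)/\sim_{\rm ori}$ and ${\rm ori}(D)$, which is why the paper states it without further proof. Nothing is missing, and your remark that bijectivity (rather than mere surjectivity) is what guarantees the multiplicities match is the right point to flag.
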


Let $A$ be an abelian group.
Let $\theta^{\rm Q}: C^{\rm Q}_2 (X)_Y \to A$ and $\theta^{\rm SQ}: C^{\rm SQ}_2 (D_X,\rpm)_Y \to A$ be a quandle $2$-cocycle and a symmetric quandle $2$-cocycle, respectively,  such that $\theta^{\rm Q} \circ T_2^{D_X \to X} = \theta^{\rm SQ} $.

For a diagram $D$ of an unoriented link and $[C]_{\rm ori} \in {\rm Col}_{(D_X,\rpm)_Y}(D)/\sim_{\rm ori}$, we denote by ${\Phi}_{\theta^{\rm SQ}}^{(D_X,\rpm)_Y}(D,[C]_{\rm ori})$ the multi-set
\[
\{
\theta^{\rm SQ} (W^{(D_X,\rpm)_Y} (D,C))\in A  ~|~ C \in [C]_{\rm ori}
\}.
\]
We note that ${\Phi}_{\theta^{\rm SQ}}^{(D_X,\rpm)_Y}(D,[C]_{\rm ori}) \subset {\Phi}_{\theta^{\rm SQ}}^{(D_X,\rpm)_Y}(D)$.

The next theorem shows that the quandle cocycle invariant $\Phi_{\theta^{\rm Q}}^{X_Y}(L,o)$ of an  oriented link $(L,o)$ can be also interpreted by using  symmetric quandles.
\begin{theorem}\label{thm:quandlecocycleinvariantsinterpretation2}
Let $(D,o)$ be a diagram of an oriented link and take $[C]_{\rm ori} \in {\rm Col}_{(D_X,\rpm)_Y}(D)/\sim _{\rm ori}$ 
such that $(D,o)=T_{(D,[C]_{\rm ori}) \to (D,o) }(D, [C]_{\rm ori})$.
As multi-sets, we have 
\[
\Phi_{\theta^{\rm Q}}^{X_Y}(D,o)={\Phi}_{\theta^{\rm SQ}}^{(D_X,\rpm)_Y}(D,[C]_{\rm ori}).
\]
\end{theorem}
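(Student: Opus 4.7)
The plan is to reduce the statement to Theorem~\ref{thm:weightsum2} via the compatibility hypothesis $\theta^{\rm Q} \circ T_2^{D_X \to X} = \theta^{\rm SQ}$, which lifts the $2$-chain identity proved there to an identity of $A$-valued weight sums.

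First, I would recall from the proof of Theorem~\ref{thm:correspondencecoloring} that the restriction
\[
T_{(D,C) \to ((D,o), C)}\big|_{\{D\} \times [C]_{\rm ori}} : \{D\} \times [C]_{\rm ori} \to \{(D,o)\} \times {\rm Col}_{X_Y}(D,o)
\]
is a bijection, so each $C \in [C]_{\rm ori}$ corresponds to a unique $\bar{C} \in {\rm Col}_{X_Y}(D,o)$, and every element of ${\rm Col}_{X_Y}(D,o)$ arises this way exactly once. This reduces the multi-set equality to a termwise comparison indexed by $C \in [C]_{\rm ori}$.

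Second, for each such pair $(C, \bar{C})$, the crossingwise weight comparison in the proof of Theorem~\ref{thm:weightsum2} (using Figure~\ref{weight1}) already shows
\[
T_2^{D_X \to X}\!\left(W^{(D_X,\rpm)_Y}(D,C)\right) = W^{X_Y}((D,o), \bar{C})
\]
in $C_2^{\rm Q}(X)_Y$. Applying $\theta^{\rm Q}$ to both sides and using the hypothesis $\theta^{\rm Q} \circ T_2^{D_X \to X} = \theta^{\rm SQ}$ yields
\[
\theta^{\rm SQ}\!\left(W^{(D_X,\rpm)_Y}(D,C)\right) = \theta^{\rm Q}\!\left(W^{X_Y}((D,o), \bar{C})\right) \in A.
\]

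Third, running $C$ over $[C]_{\rm ori}$ and noting that $\bar{C}$ runs bijectively over ${\rm Col}_{X_Y}(D,o)$, the two resulting multi-sets of values in $A$ coincide, which is exactly $\Phi_{\theta^{\rm Q}}^{X_Y}(D,o) = \Phi_{\theta^{\rm SQ}}^{(D_X,\rpm)_Y}(D, [C]_{\rm ori})$. There is no real obstacle here: the bijection between colorings and the chain-level weight identity were established in the previous theorems, and Corollary~\ref{cor:4.9} guarantees that $\theta^{\rm SQ}$ is a well-defined symmetric quandle cocycle whenever $\theta^{\rm Q}$ is a quandle cocycle, so the argument is essentially the cocycle-level transcription of Theorem~\ref{thm:weightsum2}.
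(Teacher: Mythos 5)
Your proposal is correct and follows essentially the same route as the paper's own proof: it invokes the bijection $T_{(D,C)\to((D,o),C)}|_{\{D\}\times[C]_{\rm ori}}$ from the proof of Theorem~\ref{thm:correspondencecoloring}, the chain-level identity $T_2^{D_X\to X}(W^{(D_X,\rpm)_Y}(D,C))=W^{X_Y}((D,o),\bar{C})$ from the proof of Theorem~\ref{thm:weightsum2}, and the compatibility $\theta^{\rm Q}\circ T_2^{D_X\to X}=\theta^{\rm SQ}$ to identify the two multi-sets term by term. The only cosmetic difference is your explicit appeal to Corollary~\ref{cor:4.9}, which the paper leaves implicit in the hypotheses preceding the theorem.
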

\begin{proof}
Note again that as mentioned in the proof of Theorem~\ref{thm:correspondencecoloring}, 
the restriction 
\begin{align*}
&T_{(D,C) \to ((D,o), C)} |_{\{D\} \times [C]_{\rm ori}} : \{D\} \times [C]_{\rm ori} \to \{(D,o)\}\times  {\rm Col}_{ X_Y }(D,o) 
\end{align*}
is a bijection.

For $C\in [C]_{\rm ori}$, let $\bar{C}\in {\rm Col}_{ X_Y }(D,o)$ such that $T_{(D,C) \to ((D,o), C)} (D,C)= ((D,o), \bar{C})$.
Since we have $T_2^{D_X \to X} (W^{(D_X,\rpm)_Y} (D, C)) = W^{X_Y} ((D,o),\bar{C})$ as mentioned in the proof of  Theorem~\ref{thm:weightsum2},  it holds that
\begin{align*}
&{\Phi}_{\theta^{\rm SQ}}^{(D_X,\rpm)_Y}(D,[C]_{\rm ori})&&=\{ \theta^{\rm SQ} (W^{(D_X,\rpm)_Y} (D,C))\in A  ~|~ C \in [C]_{\rm ori} \}\\
&&&=\{ \theta^{\rm Q} \circ T_2^{D_X \to X} (W^{(D_X,\rpm)_Y} (D,C))\in A  ~|~ C \in [C]_{\rm ori} \}\\
&&&=\{ \theta^{\rm Q} (W^{X_Y} ((D,o),\bar{C}))\in A  ~|~ \bar{C} \in {\rm Col}_{X_Y}(D,o) \}\\
&&&=\Phi_{\theta^{\rm Q}}^{X_Y} (D,o).
\end{align*}
\end{proof}

For an unoriented link $L$, by considering all the orientations of $L$ and taking the quandle cocycle invariant $\Phi_\theta ^{X_Y} (D,o)$  for each orientation $o$, we can define an invariant of $L$. 
More precisely, for an diagram $D$ of $L$, the multi-set 
\[
\{ \Phi_\theta^{X_Y} (D,o) ~|~ o \in {\rm ori}(D) \} 
\]
is an invariant of $L$.  
The next property implies that the quandle cocycle invariants of an unoriented link $L$ can be  also interpreted by using symmetric quandles.
\begin{corollary}\label{corcocycleinvariants}
Let $D$ be a diagram of an unoriented link.
As multi-sets, we have
\begin{align*}
&\{ \Phi_{\theta^{\rm Q}}^{X_Y} (D,o) ~|~ o \in {\rm ori}(D) \} \\
&= \{{\Phi}_{\theta^{\rm SQ}}^{(D_X,\rpm)_Y}(D,[C]_{\rm ori})~|~ [C]_{\rm ori} \in {\rm Col}_{(D_X,\rpm)_Y} (D)/\sim  _{\rm ori} \}.
\end{align*}
\end{corollary}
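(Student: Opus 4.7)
The plan is to deduce Corollary~\ref{corcocycleinvariants} directly from Theorem~\ref{thm:quandlecocycleinvariantsinterpretation2} by summing over orientations, using the bijectivity of the transformation map $T_{(D,[C]_{\rm ori})\to (D,o)}$ established earlier in Section~\ref{sec:5}. Since Theorem~\ref{thm:quandlecocycleinvariantsinterpretation2} gives the equality of multi-sets $\Phi_{\theta^{\rm Q}}^{X_Y}(D,o)={\Phi}_{\theta^{\rm SQ}}^{(D_X,\rpm)_Y}(D,[C]_{\rm ori})$ whenever $(D,o)=T_{(D,[C]_{\rm ori}) \to (D,o)}(D,[C]_{\rm ori})$, the corollary amounts to observing that both indexing sets of the outer multi-sets are in bijective correspondence via this transformation map, and that this bijection identifies the corresponding entries.

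First I would invoke the lemma (in the $(D_X,\rpm)_Y$-setting) stating that
\[
T_{(D,[C]_{\rm ori})\to (D,o)}:\{D\}\times {\rm Col}_{(D_X,\rpm)_Y}(D)/\sim_{\rm ori}\longrightarrow \{D\}\times {\rm ori}(D)
\]
is a bijection. This gives a one-to-one pairing between orientation equivalence classes $[C]_{\rm ori}$ on the right side of the corollary and orientations $o$ on the left side. Next, for each matched pair $([C]_{\rm ori},o)$, Theorem~\ref{thm:quandlecocycleinvariantsinterpretation2} provides the equality $\Phi_{\theta^{\rm Q}}^{X_Y}(D,o)={\Phi}_{\theta^{\rm SQ}}^{(D_X,\rpm)_Y}(D,[C]_{\rm ori})$ of the corresponding entries.

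Finally, I would combine these two facts: taking the multi-set of entries on either side of the bijection produces the same multi-set of multi-sets, which is exactly the claimed equality. The argument is essentially bookkeeping with no computational obstacle, since all the real content (the bijection of colorings, the fact that weights transform correctly under $T_2^{D_X \to X}$, and the compatibility $\theta^{\rm Q}\circ T_2^{D_X\to X}=\theta^{\rm SQ}$) is already packaged into Theorem~\ref{thm:quandlecocycleinvariantsinterpretation2} and the preceding lemmas. The only point that needs care is to make clear that the multi-set structure is preserved: distinct orientations $o,o'$ correspond to distinct equivalence classes $[C]_{\rm ori},[C']_{\rm ori}$, so no collapsing or duplication occurs when re-indexing. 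This is guaranteed by the injectivity half of the bijection, which was already verified.
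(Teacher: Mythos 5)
Your proposal is correct and is exactly the argument the paper intends: the corollary is stated without proof precisely because it follows immediately from Theorem~\ref{thm:quandlecocycleinvariantsinterpretation2} applied entry-wise along the bijection $T_{(D,[C]_{\rm ori})\to (D,o)}$ between $ {\rm Col}_{(D_X,\rpm)_Y}(D)/\sim_{\rm ori}$ and ${\rm ori}(D)$. Your added remark that injectivity of this map prevents any collapsing of repeated entries is the right point to make explicit for the multi-set equality.
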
 

\begin{remark}
As a consequence of the results of this section, it is no exaggeration to say that  symmetric quandles are also useful for oriented links, and  
symmetric quandles  are not less than quandles in the case of considering coloring numbers and cocycle invariants for oriented links.
\end{remark}

%%%%%%%%%%%%%%%%%%%%%%%%%%%%%%%%%%%%%%%%%%%%%%%%%%%%%%%%%%%%%%%%%%%%%%%%%%%%%%%%%%%%%%%%%
%%%%%%%%%%%%%%%%%%%%%%%%%%%%%%%%%%%%%%%%%%%%%%%%%%%%%%%%%%%%%%%%%%%%%%%%%%%%%%%%%%%%%%%%%
%%%%%%%%%%%%%%%%%%%%%%%%%%%%%%%%%%%%%%%%%%%%%%%%%%%%%%%%%%%%%%%%%%%%%%%%%%%%%%%%%%%%%%%%%
\section{Invariants for surface-links using quandles and symmetric quandles}\label{sec:6}
For oriented (or orientable, unoriented) surface-links, we have the same properties, that is, quandle coloring numbers, quandle homology invariants and quandle cocycle invariants are interpreted by using symmetric doubles of quandles. 
In this section, we summarize our results for surface-links.

\subsection{Surface-links}
A {\it (orientable) surface-link} is a disjoint union of orientable closed surfaces locally-flatly embedded in $\mathbb R^4$.
Two surface-links are said to be {\it equivalent} if there exists an orientation-preserving self-homeomorphism of $\mathbb R^4$ which maps one surface-link onto the other.
A {\it diagram} of a surface-link is its image, via a generic projection from $\mathbb R^4$ to $\mathbb R^3$, equipped with the height information around the double point curves. 
The height information is given by removing the regular neighborhoods of the lower double points. 
Then a diagram is regarded as a disjoint union of connected compact surfaces, each of which is called a {\it sheet}.
A connected component after removing the regular neighborhoods of all double points from the diagram is called a {\it semi-sheet}.
The $3$-dimensional space $\mathbb R^3$ is separated into several connected regions by a diagram. We call each connected region a {\it complementary region} of the diagram.
Refer to \cite{CarterJelsovskyKamadaLangfordSaito03, CarterKamadaSaitobook, CarterSaitobook, KamadaOshiro10} for more details. 
For an oriented surface-link diagram $(D,o)$, we often use a normal orientation $n_{s}$ for a sheet $s$  to represent the orientation $o=(o_1,o_2)$ such that the triple $(o_1,o_2,n_s)$ matches the right-handed orientation of $\mathbb R^3$, see Figure~\ref{orientation}.  
\begin{figure}[t]
 \begin{center}
\includegraphics[width=90mm]{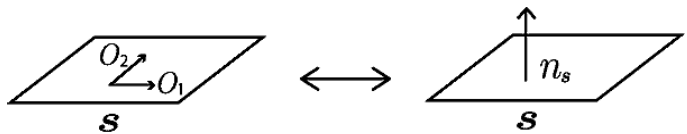}
\end{center}
\caption{The orientation and the normal orientation}
\label{orientation}
\end{figure}

\subsection{Coloring numbers using quandles and symmetric quandles for oriented surface-links}
An {\it $X$-coloring} of an oriented surface-link diagram $(D,o)$ for a given quandle $(X,*)$ is defined as an assignment  of an element of $X$ to each sheet satisfying the  condition depicted in the left of Figure~\ref{doublepointcondition} around double point curves.
We denote by ${\rm Col}_{X}(D,o)$ the set of $X$-colorings of $(D,o)$. 
An {\it $X_Y$-coloring} of an oriented surface-link diagram $(D,o)$ for a given quandle $(X,*)$ and a given $X$-set $Y$ is an $X$-coloring of $(D,o)$ with an assignment of an element of $Y$ to each complementary region of $D$ satisfying the condition depicted in the right of Figure~\ref{doublepointcondition} around semi-sheets.
We denote by ${\rm Col}_{X_Y}(D,o)$ the set of $X_Y$-colorings of $(D,o)$. 
Refer to \cite{CarterJelsovskyKamadaLangfordSaito03, CarterKamadaSaitobook} for more details. 
\begin{figure}[t]
 \begin{center}
\includegraphics[width=90mm]{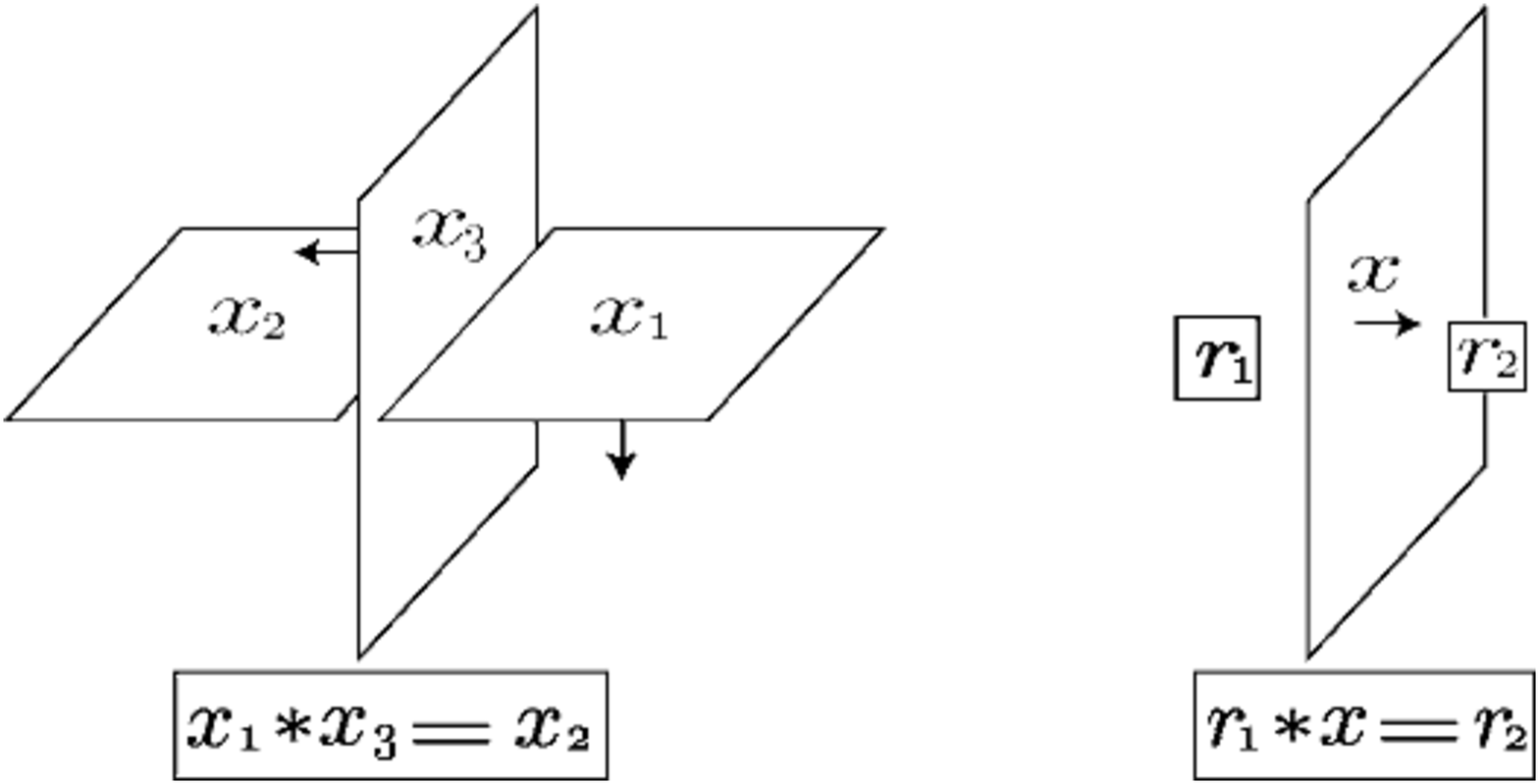}
\end{center}
\caption{The conditions of colorings}
\label{doublepointcondition}
\end{figure}

An {\it $(X,\rho)$-coloring} of a surface-link diagram $D$ for a given symmetric quandle $(X,\rho,*)$ is defined as an equivalence class of an assignment  of an element of $X$  and a normal orientation to each semi-sheet satisfying the conditions depicted in Figures~\ref{doublepointcondition} and \ref{doublepointcondition2} around double point curves, where the equivalence relation is generated by basic inversions depicted in Figure~\ref{basicinversion2}.
We denote by ${\rm Col}_{(X,\rho)}(D)$ the set of $(X,\rho)$-colorings of $D$. 
An {\it $(X,\rho)_Y$-coloring} of a surface-link diagram $D$ for a given symmetric quandle $(X,\rho,*)$ and a given $(X,\rho)$-set $Y$ is an $(X,\rho)$-coloring of $D$ with an assignment of an element of $Y$ to each complementary region of $D$ satisfying the condition depicted in the right of Figure~\ref{doublepointcondition} around semi-sheets. We denote by ${\rm Col}_{(X,\rho)_Y}(D)$ the set of $(X,\rho)_Y$-colorings of $D$.
Refer to \cite{KamadaOshiro10} for more details.
\begin{figure}[t]
 \begin{center}
\includegraphics[width=95mm]{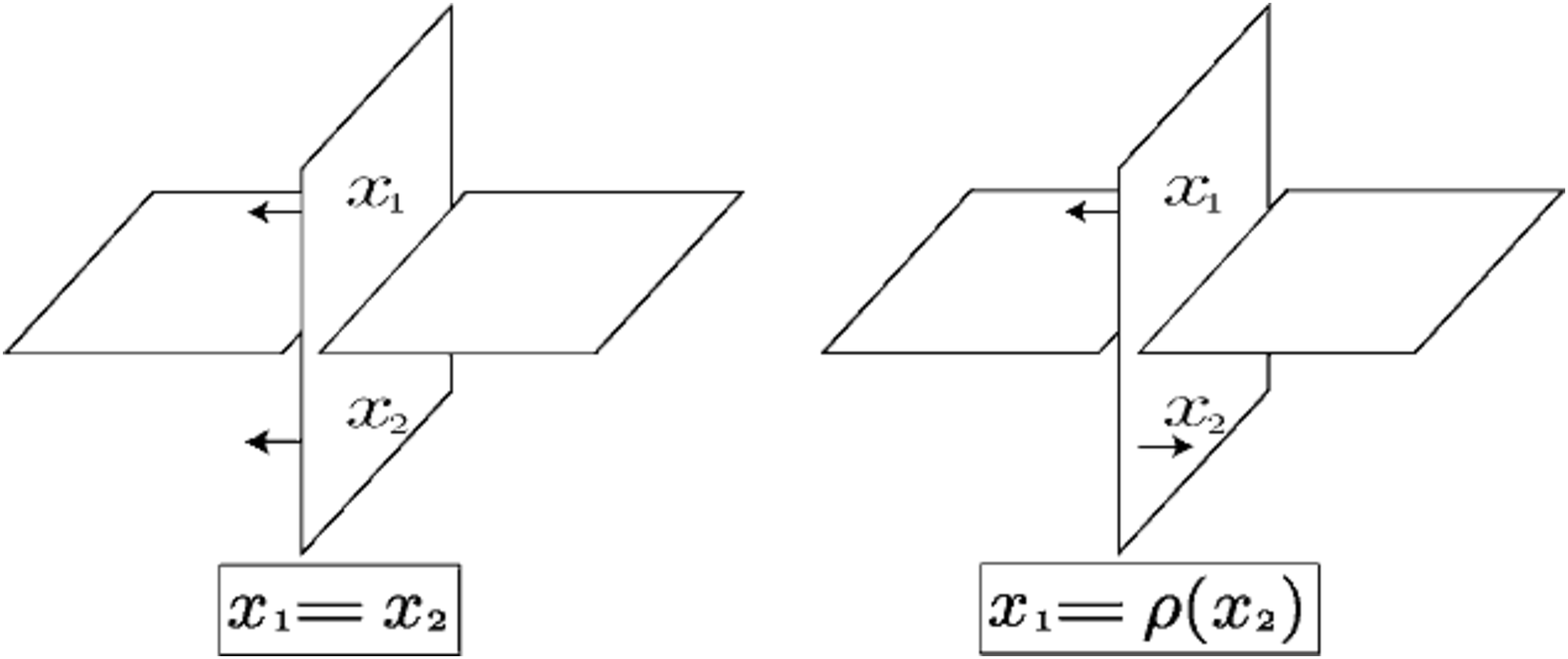}
\end{center}
\caption{The conditions of colorings}
\label{doublepointcondition2}
\end{figure}
\begin{figure}[t]
 \begin{center}
\includegraphics[width=90mm]{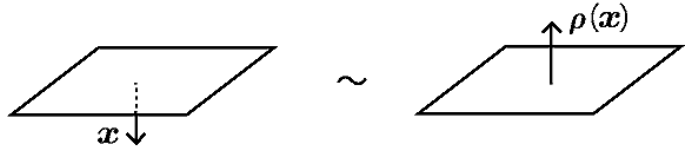}
\end{center}
\caption{A basic inversion}
\label{basicinversion2}
\end{figure}

Let $X$ be a quandle and  $(D_X, \rpm)$ the symmetric double of $X$.
As in the case of links, by performing basic inversions, any $(D_X,\rpm)$-coloring  $C$ of a diagram $D$ of an unoriented surface-link is uniquely represented by an assignment of a normal orientation  and an element of $X^{+1}$ to each semi-sheet of $D$.  We call the assignment the {\it canonical form} of $C$.

Let $D$ be a diagram of an unoriented surface-link and  $C$ a $(D_X,\rpm)$-coloring  of $D$.
For a semi-sheet $s$ of $D$ and the normal orientation $n_s$  assigned to $s$ for the canonical form of $C$,  
we set the orientation $(o_1, o_2)$ of $s$ so that the triple $(o_1, o_2, n_s)$ matches the right-handed orientation of $\mathbb R^3$, see Figure~\ref{orientation}. 
Such orientations $(o_1, o_2)$ determine an orientation $o=(o_1,o_2)$ of $D$, and we call the orientation the {\it orientation induced by the canonical form of $(D,C)$} and the oriented diagram $(D,o)$ the {\it oriented diagram induced by the canonical form of $(D,C)$}.
Let $C$ and $C'$ be $(D_X,\rpm)$-colorings of $D$.
We say that $C$ and $C'$ are {\it orientation equivalent} $(C\sim_{\rm ori} C')$ if the orientation induced by the canonical form of $(D,C)$ coincides with that of $(D,C')$. We denote by $[C]_{\rm ori}$ the orientation equivalence class of a $(D_X,\rpm)$-coloring $C$.
The {\it transformation map from $(D,[C]_{\rm ori})$ to $(D,o)$} is the map  
\begin{align*}
&T_{(D,[C]_{\rm ori})\to (D,o)}: \{D\} \times {\rm Col}_{(D_X,\rpm)} (D)/\sim_{\rm ori} \to \{D\} \times {\rm ori}(D);\\
&(D, [C]_{\rm ori}) \mapsto \mbox{the oriented diagram $(D,o)$ induced by the canonical form of $(D,C)$}.
\end{align*}

The next theorem implies that quandle coloring numbers for oriented surface-links are interpreted by using symmetric quandle colorings.
\begin{theorem}\label{lem:correspondenceCorandori2surface}
Let $(D,o)$ be a diagram of an oriented surface-link and take $[C]_{\rm ori} \in {\rm Col}_{(D_X,\rpm)} (D)/\sim_{\rm ori} $ such that $(D,o)=T_{(D,[C]_{\rm ori})\to (D,o)}(D,[C]_{\rm ori})$.
Then there exists a one-to-one correspondence between ${\rm Col}_X(D,o)$ and $[C]_{\rm ori}$, and thus, we have
\[
\# {\rm Col}_X(D,o) = \# [C]_{\rm ori}.
\]
\end{theorem}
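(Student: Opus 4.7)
The plan is to mimic the proof of Theorem~\ref{lem:correspondenceCorandori2} verbatim, replacing arcs and semi-arcs with sheets and semi-sheets and adjusting the orientation convention from $(o,n_\alpha)$ in $\mathbb{R}^2$ to $(o_1,o_2,n_s)$ in $\mathbb{R}^3$. The core machinery is the surface-link analog of the transformation map $T_{(D,C) \to ((D,o),C)}$, and once it is shown to be bijective, the theorem follows by restriction exactly as in the link case.

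First, I would define the surface-link analog of the map $T_{(D,C) \to ((D,o),C)}$ as follows. Given a $(D_X,\rpm)$-colored diagram $(D,C)$, take its canonical form, which by the discussion preceding the theorem assigns a normal orientation $n_s$ and an element $x^{+1}\in X^{+1}$ to each semi-sheet $s$ of $D$. From the normal orientation $n_s$ produce the orientation $(o_1,o_2)$ on $s$ such that $(o_1,o_2,n_s)$ is right-handed, then replace each label $x^{+1}\in X^{+1}$ by its image $x\in X$ under the isomorphism $X^{+1}\cong X$ of Lemma~\ref{lem:X+1}. Since two semi-sheets of $D$ belonging to the same sheet have coherent normal orientations and identical $X^{+1}$-labels in the canonical form, this data descends from semi-sheets to sheets, yielding an oriented, $X$-colored diagram $((D,o),\bar C)$.

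Next I would verify that this assignment lands in ${\rm Col}_X(D,o)$: the $(D_X,\rpm)$-coloring condition at each double point curve (Figures~\ref{doublepointcondition} and \ref{doublepointcondition2}), when specialized to the canonical form where all signs are $+1$ and all normal orientations are coherent with the surface orientation, reduces exactly to the $X$-coloring condition $x_1 * x_3 = x_2$ under the identification $X^{+1}\cong X$. I would then build the inverse $T_{((D,o),C) \to (D,C)}$ by the reverse procedure: for each sheet of $(D,o)$ labeled by $x\in X$, equip each of its semi-sheets with the normal orientation $n_s$ determined by the right-handed rule from $(o_1,o_2)$ and the label $x^{+1}\in X^{+1}$; Lemma~\ref{lem:X+1} guarantees that the coloring condition is preserved. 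The two constructions are obviously mutually inverse, so $T_{(D,C) \to ((D,o),C)}$ is a bijection between $\{D\}\times {\rm Col}_{(D_X,\rpm)}(D)$ and $\bigcup_{o\in {\rm ori}(D)}\{(D,o)\}\times {\rm Col}_X(D,o)$.

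Finally, I would conclude exactly as in the proof of Theorem~\ref{lem:correspondenceCorandori2}. By construction, the oriented diagram induced by the canonical form of $(D,C)$ depends only on the orientation equivalence class $[C]_{\rm ori}$, so the image of the subset $\{D\}\times[C]_{\rm ori}$ under $T_{(D,C) \to ((D,o),C)}$ lies in $\{(D,o)\}\times {\rm Col}_X(D,o)$ for the specified $(D,o)$. Conversely, every element of $\{(D,o)\}\times {\rm Col}_X(D,o)$ has a preimage inside $\{D\}\times[C]_{\rm ori}$ by the bijectivity just established. Hence the restriction
\[
T_{(D,C) \to ((D,o),C)}\big|_{\{D\}\times [C]_{\rm ori}} : \{D\}\times [C]_{\rm ori} \to \{(D,o)\}\times {\rm Col}_X(D,o)
\]
is a bijection, yielding $\#{\rm Col}_X(D,o) = \#[C]_{\rm ori}$.

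The main obstacle is not logical but bookkeeping: one must carefully check that the double-point coloring conditions in Figures~\ref{doublepointcondition} and \ref{doublepointcondition2} translate cleanly between the two settings under the canonical form. In particular, the consistency across double point curves (matching labels on the two semi-sheets of a broken sheet and the relation $x_1*x_3=x_2$ between under- and over-sheets) must be shown to correspond, under $X^{+1}\cong X$, with the coloring rules for oriented $X$-colorings at double point curves — but since the canonical form already forces all signs to be $+1$ and all normal orientations to be coherent with the induced surface orientation, this verification is essentially the same as in the link case and presents no genuinely new difficulty.
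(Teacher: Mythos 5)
Your proposal is correct and matches the paper's approach: the paper gives no separate proof of Theorem~\ref{lem:correspondenceCorandori2surface}, stating only in a remark that it is analogous to Theorem~\ref{lem:correspondenceCorandori2}, and your argument is precisely the intended transcription of that proof — build the surface-link version of the bijective transformation map $T_{(D,C)\to((D,o),C)}$ via the canonical form on semi-sheets and restrict it to $\{D\}\times[C]_{\rm ori}$.
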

 The next corollary implies that the quandle coloring numbers of an (orientable, unoriented) surface-link $F$ can be also interpreted by using symmetric quandles. 
\begin{corollary}\label{corsurface}
Let $D$ be a diagram of an unoriented surface-link.
As multi-sets, we have
\[
\{ \# {\rm Col}_{X} (D,o) ~|~ o  \in {\rm ori}(D) \} = \{  \# [C]_{\rm ori} ~|~ [C]_{\rm ori} \in {\rm Col}_{(D_X,\rpm)}(D)/\sim_{\rm ori}\}.
\]
\end{corollary}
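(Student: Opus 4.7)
The plan is to derive Corollary~\ref{corsurface} as a direct consequence of Theorem~\ref{lem:correspondenceCorandori2surface}, exactly parallel to how Corollary~\ref{thm:coloringnumber} follows from Theorem~\ref{lem:correspondenceCorandori2} in the link case. First I would invoke the fact that the transformation map
\[
T_{(D,[C]_{\rm ori})\to (D,o)}: \{D\} \times {\rm Col}_{(D_X,\rpm)}(D)/\sim_{\rm ori} \to \{D\} \times {\rm ori}(D)
\]
is a bijection for surface-link diagrams as well (the surface analogue of the corresponding lemma from Section~\ref{sec:5}, proved by the same argument: injectivity is built into the definition of $\sim_{\rm ori}$, and for surjectivity one writes down an explicit canonical form from any orientation $o$ by assigning $x^{+1}$ and the normal orientation compatible with $o$ to each semi-sheet).

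Using this bijection, the equivalence classes in ${\rm Col}_{(D_X,\rpm)}(D)/\sim_{\rm ori}$ are in one-to-one correspondence with the orientations in ${\rm ori}(D)$, so I would re-index the multi-set on the right-hand side by the orientations: given $o\in {\rm ori}(D)$, there is exactly one $[C]_{\rm ori}$ with $T_{(D,[C]_{\rm ori})\to(D,o)}(D,[C]_{\rm ori})=(D,o)$. Then Theorem~\ref{lem:correspondenceCorandori2surface} gives $\#{\rm Col}_X(D,o)=\#[C]_{\rm ori}$ for this distinguished $[C]_{\rm ori}$. Ranging over all $o$ identifies the two multi-sets termwise.

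The argument is essentially a repetition of the link-case corollary's proof, with the combinatorial underpinnings (canonical form, induced orientation, and the map $T_{(D,C)\to((D,o),C)}$) already set up in the surface-link subsection. No step looks like a serious obstacle; the only place where some care is required is in re-verifying the bijectivity of $T_{(D,[C]_{\rm ori})\to(D,o)}$ for surface-link diagrams, since the coloring conditions around double point curves (Figures~\ref{doublepointcondition} and \ref{doublepointcondition2}) are more intricate than the crossing conditions for links. However, because these conditions restrict only the element of $X$ assigned to each semi-sheet and not the normal orientation (which, in the canonical form, is freely determined by the global orientation $o$ of $D$), the injectivity and surjectivity arguments go through without modification.
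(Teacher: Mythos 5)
Your proposal is correct and matches the paper's (implicit) argument exactly: the corollary is obtained by combining the bijectivity of $T_{(D,[C]_{\rm ori})\to (D,o)}$, which re-indexes the right-hand multi-set by orientations, with the termwise equality $\#{\rm Col}_X(D,o)=\#[C]_{\rm ori}$ from Theorem~\ref{lem:correspondenceCorandori2surface}, just as Corollary~\ref{thm:coloringnumber} follows from Theorem~\ref{lem:correspondenceCorandori2} in the link case. The paper leaves this proof to the analogy with Section~\ref{sec:5}, and your outline supplies precisely that argument.
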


Similarly we have the following {\it transformation map from $(D,[C]_{\rm ori})$ to $(D,o)$}.
\begin{align*}
&T_{(D,[C]_{\rm ori})\to (D,o)}: \{D\} \times {\rm Col}_{(D_X,\rpm)_Y} (D)/\sim_{\rm ori} \to \{D\} \times {\rm ori}(D);\\
&(D, [C]_{\rm ori}) \mapsto \mbox{the oriented diagram $(D,o)$ induced by the canonical form of $(D,C)$},
\end{align*}
where we use the same symbol $T_{(D,[C]_{\rm ori})\to (D,o)}$ for simplicity.
We have the following theorem.
\begin{theorem}\label{thmsurface}
Let $(D,o)$ be a diagram of an oriented surface-link and take $[C]_{\rm ori} \in {\rm Col}_{(D_X,\rpm)_Y} (D)/\sim_{\rm ori} $ such that $(D,o)=T_{(D,[C]_{\rm ori})\to (D,o)}(D,[C]_{\rm ori})$.
Then there exists a one-to-one correspondence between ${\rm Col}_{X_Y}(D,o)$ and $[C]_{\rm ori}$, and thus, we have
\[
\# {\rm Col}_{X_Y}(D,o) = \# [C]_{\rm ori}.
\]
\end{theorem}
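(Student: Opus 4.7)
The plan is to mimic the proof of Theorem~\ref{thm:correspondencecoloring} (the link version with $X_Y$-colorings) in the surface-link setting, following exactly the template that Section~\ref{sec:5} already established. First, I would define, in parallel to the link case, the transformation map
\begin{align*}
T_{(D,C)\to ((D,o),C)}: \{D\}\times {\rm Col}_{(D_X,\rpm)_Y}(D) \to \bigcup_{o\in {\rm ori}(D)}\{(D,o)\}\times {\rm Col}_{X_Y}(D,o),
\end{align*}
which sends a $(D_X,\rpm)_Y$-colored diagram $(D,C)$ to the oriented $X_Y$-colored diagram induced by the canonical form of $(D,C)$: put on $D$ the orientation induced by the canonical form of $(D,C)$ via Figure~\ref{orientation}, replace the element $x^{+1}\in X^{+1}$ assigned to each (semi-)sheet by $x\in X$, and leave the $Y$-labels on complementary regions unchanged.

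Next, I would verify that $T_{(D,C)\to ((D,o),C)}$ is a bijection by exhibiting an explicit inverse, as was done for links. Given an oriented $X_Y$-colored diagram $((D,o),\bar{C})$ with $o=(o_1,o_2)$, I lift each sheet's orientation to the normal orientation $n_s$ satisfying that $(o_1,o_2,n_s)$ matches the right-handed orientation of $\mathbb R^3$, lift each label $x\in X$ to $x^{+1}\in X^{+1}\subset D_X$, and keep the $Y$-labels on complementary regions. Lemma~\ref{lem:X+1} and the definition of $\star$ and $\rpm$ guarantee that the double point curve conditions of Figures~\ref{doublepointcondition} and \ref{doublepointcondition2} are satisfied automatically, so the resulting assignment is a $(D_X,\rpm)_Y$-coloring of $D$ in canonical form, providing a well-defined two-sided inverse.

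Finally, I would restrict to the orientation equivalence class. By construction, every $C'\in [C]_{\rm ori}$ induces the same oriented diagram $(D,o)$ as $C$, so
\begin{align*}
T_{(D,C)\to ((D,o),C)}\big|_{\{D\}\times [C]_{\rm ori}}:\{D\}\times [C]_{\rm ori}\to \{(D,o)\}\times {\rm Col}_{X_Y}(D,o)
\end{align*}
is well-defined; conversely, for any $\bar{C}\in {\rm Col}_{X_Y}(D,o)$, the preimage under the inverse described above lies in $[C]_{\rm ori}$ since its canonical form induces $o$. Restriction of a bijection is a bijection, so this yields the desired one-to-one correspondence and the cardinality equality $\#{\rm Col}_{X_Y}(D,o)=\#[C]_{\rm ori}$.

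The main obstacle is not conceptual but bookkeeping: one must check, in the two-dimensional setting, that the symmetric quandle double point curve conditions of Figures~\ref{doublepointcondition} and \ref{doublepointcondition2} specialize, once the canonical form confines all labels to $X^{+1}$ and all normal orientations are determined from $o$ via Figure~\ref{orientation}, exactly to the quandle double point curve conditions of Figure~\ref{doublepointcondition}. This is a case analysis on the relative normal orientations of the sheets meeting at a double point curve, entirely analogous to the semi-arc analysis carried out for links in Subsection~\ref{subsection:coloringnumber}, but with sheets and semi-sheets in $\mathbb R^3$ replacing arcs and semi-arcs in $\mathbb R^2$; no new algebraic input beyond Lemma~\ref{lem:X+1} and Lemma~\ref{lem:Y} is required.
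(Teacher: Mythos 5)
Your proposal is correct and follows exactly the route the paper intends: Theorem~\ref{thmsurface} is stated in Section~\ref{sec:6} without a written proof precisely because it is the surface-link analogue of Theorem~\ref{thm:correspondencecoloring}, whose proof consists of restricting the bijective transformation map $T_{(D,C)\to((D,o),C)}$ to $\{D\}\times[C]_{\rm ori}$. Your additional verification that the canonical form forces coherent normal orientations along double point curves (so that the symmetric quandle conditions collapse to the quandle conditions) is exactly the bookkeeping the paper leaves implicit.
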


\begin{remark}
Theorem~\ref{lem:correspondenceCorandori2surface}, Corollary~\ref{corsurface} and Theorem~\ref{thmsurface} are analogous to Theorem~\ref{lem:correspondenceCorandori2}, Corollary~\ref{thm:coloringnumber} and Theorem~\ref{thm:correspondencecoloring}, respectively.
\end{remark}

\subsection{Cocycle invariants using quandles and symmetric quandles  for oriented surface-links}
The {\it quandle homology invariant} $\mathcal{W}^{X_Y} (F,o)$ of an oriented surface-link $(F,o)$ for a quandle $X$ and an $X$-set $Y$ is defined by taking the sum of the weights of triple points for each $X_Y$-coloring, and collecting the weight sums for all the $X_Y$-colorings, refer to \cite{CarterJelsovskyKamadaLangfordSaito03, CarterKamadaSaitobook} for more details, where the weight of a triple point is obtained as depicted in Figure~\ref{triplepointweight}.
The {\it symmetric quandle homology invariant} $\mathcal{W}^{(X,\rho)_Y} (F)$ of an (unoriented) surface-link $F$ for a symmetric quandle $(X,\rho)$ and  an $(X,\rho)$-set $Y$  is also defined by taking the sum of the weights of triple points for each $(X,\rho)_Y$-coloring, and collecting the weight sums for all the $(X,\rho)_Y$-colorings, refer to \cite{KamadaOshiro10}  for more details, where the weight of a triple point is obtained as depicted in Figure~\ref{triplepointweight}.
\begin{figure}[t]
 \begin{center}
\includegraphics[width=90mm]{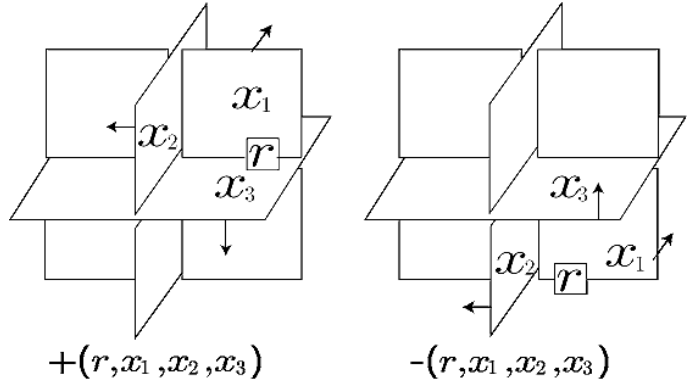}
\end{center}
\caption{The weight of a triple point}
\label{triplepointweight}
\end{figure}

The {\it quandle cocycle invariant} $\Phi^{X_Y}_\theta (F,o)$ of an oriented surface-link $(F,o)$ for a quandle $X$, an $X$-set $Y$ and a quandle $3$-cocycle $\theta$ is defined by taking the sum of the weights of triple points for each $X_Y$-coloring, and setting the multi-set of the images, by $\theta$, of the weight sums for all the $X_Y$-colorings, refer to \cite{CarterJelsovskyKamadaLangfordSaito03, CarterKamadaSaitobook}.
The {\it symmetric quandle cocycle invariant} $\Phi^{(X,\rho)_Y}_\theta (F)$ of an (unoriented) surface-link $F$ for a symmetric quandle $(X,\rho)$, an $(X,\rho)$-set $Y$ and a symmetric quandle $3$-cocycle $\theta$ is also defined by taking the sum of the weights of triple points for each $(X,\rho)_Y$-coloring, and setting the multi-set of the images, by $\theta$, of the weight sums for all the $(X,\rho)_Y$-colorings, refer to \cite{KamadaOshiro10}.

Let $X$ be a quandle, $(D_X, \rpm)$ the symmetric double of $X$, 
and $Y$  an $X$-set.  By Lemma~\ref{lem:Y}, we can also regard $Y$ as a $(D_X,\rpm)$-set.
We have the next theorem, which  shows that the quandle homology invariant $\mathcal{W}^{X_Y} (F,o)$ of an oriented surface-link $(F,o)$ can be interpreted by using symmetric quandles. 
\begin{theorem}\label{surface1}
Let $(D,o)$ is a diagram of an oriented surface-link and take $[C]_{\rm ori} \in {\rm Col}_{(D_X,\rpm)_Y}(D)/\sim _{\rm ori}$  such that $(D,o)=T_{(D,[C]_{\rm ori}) \to (D,o) }(D, [C]_{\rm ori})$.
Then as multi-sets, we have 
\[
(T_3^{D_X \to X})^*\big(\mathcal{W}^{(D_X,\rpm)_Y} (D, [C]_{\rm ori})\big)=\mathcal{W}^{X_Y} (D,o).
\]
\end{theorem}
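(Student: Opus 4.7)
The proof follows the same template as Theorem~\ref{thm:weightsum2}, with crossings replaced by triple points and $C_2$-level data replaced by $C_3$-level data. First I would establish the surface-link analogue of the bijection $T_{(D,C)\to((D,o),\bar C)}$: any $(D_X,\rpm)_Y$-colored diagram $(D,C)$ with $C\in[C]_{\rm ori}$ uniquely induces an oriented, $X_Y$-colored diagram $((D,o),\bar C)$ by endowing $D$ with the orientation coming from the canonical form of $(D,C)$, replacing the assigned $x^{+1}\in X^{+1}$ on each (semi-)sheet by $x\in X$, and keeping the $Y$-labels on complementary regions. Restricting this translation to $\{D\}\times[C]_{\rm ori}$ yields a bijection onto $\{(D,o)\}\times{\rm Col}_{X_Y}(D,o)$, exactly parallel to the link case invoked in the proof of Theorem~\ref{thm:correspondencecoloring}.

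The main step is then a local comparison of weights at each triple point. Fix $C\in [C]_{\rm ori}$ and let $\bar C$ be its image. Choose the specified region for a triple point $\tau$ and, using basic inversions within $[C]_{\rm ori}$, put $(D,C)$ into canonical form at $\tau$; then all three normal orientations of the sheets at $\tau$ point outward from the specified region and all three sheet labels have sign $+1$. Thus the weight becomes
\[
w_\tau=\varepsilon\bigl(r, x_1^{+1}, x_2^{+1}, x_3^{+1}\bigr),
\]
where $r$ labels the specified region and $x_1,x_2,x_3$ label the bottom, middle, top sheets at $\tau$. Because $\bar C$ is obtained from the canonical form of $C$ by dropping the $+1$ superscripts, and because the orientation induced on $D$ by the canonical form is precisely the one matching the right-handed frame convention of $\mathbb R^3$ (Figure~\ref{orientation}), the same triple point of $((D,o),\bar C)$ has weight $\bar w_\tau=\varepsilon(r, x_1, x_2, x_3)$ with identical sign and identical region/sheet labels. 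By Remark~\ref{remark4.7} we have $T_3^{D_X\to X}(r, x_1^{+1}, x_2^{+1}, x_3^{+1})=(r,x_1,x_2,x_3)$, hence $T_3^{D_X\to X}(w_\tau)=\bar w_\tau$.

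Summing over all triple points gives $T_3^{D_X\to X}(W^{(D_X,\rpm)_Y}(D,C))=W^{X_Y}((D,o),\bar C)$. Since $T_3^{D_X\to X}$ is a chain map (Lemma~\ref{keylemma4.8}), passing to homology yields $(T_3^{D_X\to X})^*[W^{(D_X,\rpm)_Y}(D,C)]=[W^{X_Y}((D,o),\bar C)]$. Running $C$ over $[C]_{\rm ori}$ and using the bijection $C\mapsto\bar C$ onto ${\rm Col}_{X_Y}(D,o)$ established in the first step, the claimed multi-set identity follows at once.

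The primary obstacle is the local weight-matching step: one must check that the sign $\varepsilon$ and the labels attached to the $4$-tuple coincide on the nose between the $(D_X,\rpm)_Y$-weight and the $X_Y$-weight after passing to the canonical form. This is a sign-bookkeeping exercise driven by the three-dimensional right-hand rule used to define the induced orientation, combined with the fact that the canonical form forces all signs to be $+1$ so that $T_3^{D_X\to X}$ acts simply by erasing superscripts. Everything else is formal manipulation mirroring the link-case arguments of Section~\ref{sec:5}.
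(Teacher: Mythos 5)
Your proposal is correct and is essentially the paper's intended argument: the paper gives no separate proof of Theorem~\ref{surface1}, remarking only that it is analogous to Theorem~\ref{thm:weightsum2}, and your write-up carries out exactly that analogy (the coloring bijection, the local weight comparison at triple points in canonical form, and the fact that $T_3^{D_X\to X}$ erases the $+1$ superscripts). No gaps.
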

 The next corollary shows that the quandle homology invariant $\mathcal{W}^{X_Y} (F)$ of an (orientable, unoriented) surface-link $F$ can be also interpreted by using symmetric quandles. 
\begin{corollary}\label{surface2}
Let $D$ be a diagram of an unoriented surface-link.
As multi-sets, we have
\[
\begin{array}{l}
\{ \mathcal{W}^{X_Y} (D,o) ~|~ o \in {\rm ori}(D) \}  \\[3pt]
=\{(T_3^{D_X \to X})^* (\mathcal{W}^{(D_X,\rpm)_Y} (D, [C]_{\rm ori}))~|~ [C]_{\rm ori} \in {\rm Col}_{(D_X,\rpm)_Y} (D)/\sim_{\rm ori} \}.
\end{array}
\]
\end{corollary}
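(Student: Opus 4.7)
The plan is to mirror the argument of Corollary~\ref{corweightsum} from the link setting, since the only genuinely new input needed beyond Theorem~\ref{surface1} is that the surface-link transformation map
\[
T_{(D,[C]_{\rm ori})\to (D,o)}: \{D\}\times {\rm Col}_{(D_X,\rpm)_Y}(D)/\sim_{\rm ori} \longrightarrow \{D\}\times {\rm ori}(D)
\]
is a bijection. So the proof splits naturally into two steps: first establish this bijectivity, then reindex the left-hand multi-set and apply Theorem~\ref{surface1} termwise.

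First, I would check that $T_{(D,[C]_{\rm ori})\to (D,o)}$ is bijective in the surface-link setting. Injectivity is immediate from the definition of $\sim_{\rm ori}$: two orientation-equivalent classes produce, by definition, the same induced orientation, and distinct classes produce distinct induced orientations. For surjectivity, given any $o \in {\rm ori}(D)$, I would construct a canonical-form $(D_X,\rpm)_Y$-coloring inducing $o$: pick any $x \in X$ and assign the element $x^{+1} \in D_X$ to every semi-sheet; equip each semi-sheet $s$ with the unique normal orientation $n_s$ making $(o_1,o_2,n_s)$ right-handed; and choose any $r \in Y$ for one complementary region, extending via the $G_{(D_X,\rpm)}$-action to the remaining regions. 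Since all sheets carry the same label $x^{+1}$, the double-point and semi-sheet coloring conditions are automatic, and by construction this assignment is in canonical form with induced orientation exactly $o$.

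Second, using the bijection, I would reindex the left-hand side:
\[
\{\mathcal{W}^{X_Y}(D,o) : o\in {\rm ori}(D)\} = \{\mathcal{W}^{X_Y}(D, o_{[C]_{\rm ori}}) : [C]_{\rm ori} \in {\rm Col}_{(D_X,\rpm)_Y}(D)/\sim_{\rm ori}\},
\]
where $o_{[C]_{\rm ori}}$ denotes the orientation induced by the canonical form of $(D,C)$. Then Theorem~\ref{surface1} is applied termwise: for each class $[C]_{\rm ori}$ with $(D, o_{[C]_{\rm ori}}) = T_{(D,[C]_{\rm ori})\to (D,o)}(D, [C]_{\rm ori})$, we have
\[
\mathcal{W}^{X_Y}(D, o_{[C]_{\rm ori}}) = (T_3^{D_X \to X})^*\bigl(\mathcal{W}^{(D_X,\rpm)_Y}(D, [C]_{\rm ori})\bigr),
\]
which yields the stated equality of multi-sets.

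The substantive mathematical content is entirely contained in Theorem~\ref{surface1}; the remaining work is essentially bookkeeping. The only step that requires any attention is the bijectivity of the transformation map, and the mild subtlety there is checking that the constant-coloring construction for surjectivity genuinely respects the double-point conditions of Figures~\ref{doublepointcondition} and \ref{doublepointcondition2} and that the $Y$-label extension is well defined on complementary regions of the three-dimensional diagram; but these are immediate because a constant $X$-label trivializes the quandle-operation constraints. I do not anticipate a serious obstacle beyond verifying the surface-link analog of the corresponding link lemma.
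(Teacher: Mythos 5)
Your proposal is correct and follows essentially the same route the paper takes: the corollary is obtained by combining the bijectivity of $T_{(D,[C]_{\rm ori})\to (D,o)}$ (with surjectivity via the constant coloring $x^{+1}$ on all semi-sheets, exactly as in the link case) with a termwise application of Theorem~\ref{surface1} after reindexing the left-hand multi-set. The paper states this corollary without proof as the surface-link analogue of Corollary~\ref{corweightsum}, and your argument is precisely the intended one.
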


Let $A$ be an abelian group.
Let $\theta^{\rm Q}: C^{\rm Q}_3 (X)_Y \to A$ and $\theta^{\rm SQ}: C^{\rm SQ}_3 (D_X,\rpm)_Y \to A$ be a quandle $3$-cocycle and a symmetric quandle $3$-cocycle, respectively,  such that $\theta^{\rm Q} \circ T_3^{D_X \to X} = \theta^{\rm SQ} $.
The next theorem shows that the quandle cocycle invariant $\Phi_{\theta^{\rm Q}}^{X_Y}(F,o)$ of an  oriented surface-link $(F,o)$ can be also interpreted by using  symmetric quandles.
\begin{theorem}\label{surface3}
Let $(D,o)$ is a diagram of an oriented surface-link and take $[C]_{\rm ori} \in {\rm Col}_{(D_X,\rpm)_Y}(D)/\sim _{\rm ori}$ 
such that $(D,o)=T_{(D,[C]_{\rm ori}) \to (D,o) }(D, [C]_{\rm ori})$.
As multi-sets, we have 
\[
\Phi_{\theta^{\rm Q}}^{X_Y}(D,o)={\Phi}_{\theta^{\rm SQ}}^{(D_X,\rpm)_Y}(D,[C]_{\rm ori}).
\]
\end{theorem}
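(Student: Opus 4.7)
The plan is to mimic the proof of Theorem~\ref{thm:quandlecocycleinvariantsinterpretation2} at the level of surface-link diagrams, reducing the statement to the homology-level Theorem~\ref{surface1} together with the defining relation $\theta^{\rm Q}\circ T_3^{D_X\to X}=\theta^{\rm SQ}$. Concretely, first I would point out that exactly as in the link case (where the translation $T_{(D,C)\to((D,o),C)}$ restricted to $\{D\}\times[C]_{\rm ori}$ is a bijection onto $\{(D,o)\}\times{\rm Col}_{X_Y}(D,o)$), the same restriction argument works verbatim for an oriented surface-link diagram because the canonical form of a $(D_X,\rpm)_Y$-coloring of $D$ specifies a normal orientation on each semi-sheet, and this orientation data is precisely the data needed to recover $(D,o)$ together with an $X_Y$-coloring $\bar C$. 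Thus each $C\in[C]_{\rm ori}$ corresponds uniquely to a $\bar C\in{\rm Col}_{X_Y}(D,o)$.

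Next, I would invoke Theorem~\ref{surface1}, whose proof shows (at the level of triple-point weights) that
\[
T_3^{D_X\to X}\bigl(W^{(D_X,\rpm)_Y}(D,C)\bigr)=W^{X_Y}((D,o),\bar C)
\]
for each $C\in[C]_{\rm ori}$ and its corresponding $\bar C$. This is the genuine content needed to pass from symmetric to ordinary weights in $C_3$; it is the three-dimensional analogue of the identity used in the proof of Theorem~\ref{thm:weightsum2}, and is precisely the step where the canonical-form choice makes all signs in the triple $(x_1^{\varepsilon_1},x_2^{\varepsilon_2},x_3^{\varepsilon_3})$ equal to $+1$, so that $T_3^{D_X\to X}$ simply strips the signs and leaves the weight in $C_3^{\rm Q}(X)_Y$ unchanged.

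With these two ingredients in hand, the chain of equalities is routine: applying $\theta^{\rm SQ}=\theta^{\rm Q}\circ T_3^{D_X\to X}$ to the weight of $(D,C)$ and then using the bijection $C\leftrightarrow\bar C$, one obtains
\begin{align*}
\Phi^{(D_X,\rpm)_Y}_{\theta^{\rm SQ}}(D,[C]_{\rm ori})
&=\{\theta^{\rm SQ}(W^{(D_X,\rpm)_Y}(D,C))\mid C\in[C]_{\rm ori}\}\\
&=\{\theta^{\rm Q}(T_3^{D_X\to X}(W^{(D_X,\rpm)_Y}(D,C)))\mid C\in[C]_{\rm ori}\}\\
&=\{\theta^{\rm Q}(W^{X_Y}((D,o),\bar C))\mid \bar C\in{\rm Col}_{X_Y}(D,o)\}\\
&=\Phi^{X_Y}_{\theta^{\rm Q}}(D,o),
\end{align*}
as multi-sets, which is the desired identity.

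The only real obstacle is the sheet-level verification underlying Theorem~\ref{surface1}, i.e.\ checking that the weight of a triple point at which three semi-sheets are assigned elements in $X^{+1}\subset D_X$ (with compatibly chosen normal orientations coming from the canonical form) agrees under $T_3^{D_X\to X}$ with the weight of the same triple point in the induced oriented $X_Y$-colored diagram $((D,o),\bar C)$; once that local identification is accepted (exactly as Figure~\ref{weight1} handled the link case), the present theorem is immediate from the two bijective and functorial ingredients above. Nothing else needs to be checked, because both sides are defined as multi-sets of values of the same functional on weights that have already been identified.
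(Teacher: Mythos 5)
Your proposal is correct and follows exactly the argument the paper intends: the paper omits the proof of this theorem, remarking only that it is analogous to Theorem~\ref{thm:quandlecocycleinvariantsinterpretation2}, and your adaptation (bijection $C\leftrightarrow\bar C$ via the canonical form, the triple-point weight identity $T_3^{D_X\to X}(W^{(D_X,\rpm)_Y}(D,C))=W^{X_Y}((D,o),\bar C)$ from Theorem~\ref{surface1}, and the relation $\theta^{\rm SQ}=\theta^{\rm Q}\circ T_3^{D_X\to X}$) is precisely that analogue. You also correctly isolate the one nontrivial local verification (the triple-point analogue of Figure~\ref{weight1}) as the content carried by Theorem~\ref{surface1} rather than by this theorem.
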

 The next corollary shows that the quandle cocycle invariant  $\Phi_{\theta^{\rm Q}}^{X_Y}(F)$  of an (orientable, unoriented) surface-link $F$ can be also interpreted by using symmetric quandles. 
\begin{corollary}\label{surface4}
Let $D$ be a diagram of an unoriented surface-link.
As multi-sets, we have
\begin{align*}
&\{ \Phi_{\theta^{\rm Q}}^{X_Y} (D,o) ~|~ o \in {\rm ori}(D) \} \\
&= \{{\Phi}_{\theta^{\rm SQ}}^{(D_X,\rpm)_Y}(D,[C]_{\rm ori})~|~ [C]_{\rm ori} \in {\rm Col}_{(D_X,\rpm)_Y} (D)/\sim  _{\rm ori} \}.
\end{align*}
\end{corollary}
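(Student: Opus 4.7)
The plan is to derive the corollary by combining Theorem~\ref{surface3}, applied pointwise, with a bijection between ${\rm ori}(D)$ and ${\rm Col}_{(D_X,\rpm)_Y}(D)/\sim_{\rm ori}$. Concretely, I would first show that the transformation map
\[
T_{(D,[C]_{\rm ori})\to (D,o)} : \{D\} \times {\rm Col}_{(D_X,\rpm)_Y}(D)/\sim_{\rm ori} \longrightarrow \{D\} \times {\rm ori}(D)
\]
is bijective, in direct analogy with the link case handled in Subsection~\ref{subsection:coloringnumber}. Injectivity is immediate from the definition of $\sim_{\rm ori}$, since two equivalence classes inducing the same orientation on $D$ must coincide. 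For surjectivity, given $o\in{\rm ori}(D)$, I would pick any $x\in X$ and any $r_0\in Y$, assign $x^{+1}\in X^{+1}\subset D_X$ together with the normal orientation compatible with $o$ (via the right-handed rule of Figure~\ref{orientation}) to every semi-sheet of $D$, and propagate the $Y$-labeling from a chosen complementary region by the rule $r_1 * x = r_2$. This yields a $(D_X,\rpm)_Y$-coloring $C$ in canonical form whose equivalence class $[C]_{\rm ori}$ maps to $o$.

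With the bijection in hand, for each orientation $o$ let $[C]_{\rm ori}$ denote its unique preimage under $T$. Theorem~\ref{surface3} then gives the multi-set equality
\[
\Phi_{\theta^{\rm Q}}^{X_Y}(D,o) = \Phi_{\theta^{\rm SQ}}^{(D_X,\rpm)_Y}(D, [C]_{\rm ori}),
\]
and assembling these pointwise equalities along the bijection produces precisely the claimed multi-set equality. The main (mild) obstacle is verifying that the constant-$x^{+1}$ assignment propagates consistently through double-point curves and triple points in the $3$-dimensional diagram; this is bookkeeping, since all coloring consistency conditions reduce to identities in the associated group $G_{(D_X,\rpm)}$ acting well-definedly on $Y$ by Lemma~\ref{lem:Y}. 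Once surjectivity is established, no additional triple-point computation is required beyond that already subsumed by Theorem~\ref{surface3}, so the corollary is the surface-link analogue of Corollary~\ref{corcocycleinvariants} and follows by the same assembly argument.
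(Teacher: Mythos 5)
Your proposal is correct and matches the argument the paper intends: the corollary is obtained by applying Theorem~\ref{surface3} pointwise along the bijection $T_{(D,[C]_{\rm ori})\to (D,o)}$ between orientation classes of $(D_X,\rpm)_Y$-colorings and orientations of $D$, exactly as Corollary~\ref{corcocycleinvariants} follows from Theorem~\ref{thm:quandlecocycleinvariantsinterpretation2} in the link case. Your surjectivity construction (constant $x^{+1}$ coloring in canonical form with normal orientations compatible with $o$, plus a propagated $Y$-labeling) is the same device the paper uses for the corresponding link-case lemma.
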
 

\begin{remark}
Theorem~\ref{surface1}, Corollary~\ref{surface2}, Theorem~\ref{surface3} and  Corollary~\ref{surface4} are analogous to Theorem~\ref{thm:weightsum2}, Corollary~\ref{corweightsum}, Theorem~\ref{thm:quandlecocycleinvariantsinterpretation2} and  Corollary~\ref{corcocycleinvariants}, respectively.
\end{remark}

\begin{remark}
As a consequence of the results of this section, it is no exaggeration to say that  symmetric quandles are also useful for oriented surface-links, and  
symmetric quandles are not less than quandles in the case of considering coloring numbers and cocycle invariants for oriented surface-links.
\end{remark}

%%%%%%%%%%%%%%%%%%%
% Acknowledgments
%%%%%%%%%%%%%%%%%%

\section*{Acknowledgments}

%The author wishes to thank Professor Sam Nelson, the referee and the editor for their careful reading and helpful comments.
This work was supported by JSPS KAKENHI Grant Number 16K17600.

\end{document}